\newtheorem{theorem}{Theorem}[section]
\newtheorem{lemma}[theorem]{Lemma}
\newtheorem{proposition}[theorem]{Proposition}
\newtheorem{remark}[theorem]{Remark}
\numberwithin{equation}{section}
\newcommand{\Ai}{\text{Ai\,}}
\newcommand{\re}{\text{Re\,}}
\newcommand{\pii}{2\pi\mathrm{i}}
\begin{document}
\setcounter{page}{1}

\thanks{Supported by the grant KAW 2015.0270 from the Knut and Alice Wallenberg
Foundation}

\title[The two-time distribution in geometric last-passage percolation]
{The two-time distribution in geometric last-passage percolation}
\author[K.~Johansson]{Kurt Johansson}

\address{
Department of Mathematics,
KTH Royal Institute of Technology,
SE-100 44 Stockholm, Sweden}

\email{kurtj@kth.se}

\begin{abstract}
We study the two-time distribution in directed last passage percolation with geometric weights in the first quadrant. We compute the scaling limit and show that
it is given by a contour integral of a Fredholm determinant.

\end{abstract}

\maketitle

\section{Introduction}\label{secintro}

In this paper we will consider the so called two-time distribution in directed last-passage percolation with geometric weights. This last-passage percolation model has several interpretations. It can be related to the Totally Asymmetric Simple Exclusion Process (TASEP) and to local random growth models. It is a basic example of a solvable model in the KPZ universality class. It has been less clear to what extent the two-time problem is also solvable but recently there has been some developments in this direction, \cite{Dots1}, \cite{JoTt}, \cite{FerSpo}, \cite{NarDou}, \cite{Dous} and \cite{BaiLiu}.
The approach in this paper is different in many ways from that in our previous work \cite{JoTt}. It is closer to standard computations for determinantal processes, more straightforward and simpler. 

To define the model, let $\left(w(i,j)\right)_{i,j\ge 1}$ be independent geometric random variables with parameter $q$,
$$
\mathbb{P}[w(i,j)=k]=(1-q)q^k,\quad k\ge 0.
$$
Consider the last-passage times
\begin{equation}\label{gmn}
G(m,n)=\max_{\pi:(1,1)\nearrow (m,n)} \sum_{(i,j)\in\pi} w(i,j),
\end{equation}
where the maximum is over all up/right paths from $(1,1)$ to $(m,n)$, see \cite{JoSh}. We are interested in the correlation between $G(m_1,n_1)$ and $G(m_2,n_2)$, when
$(m_1,n_1)$ and $(m_2,n_2)$ are ordered in the time-like direction, i.e. $m_1<m_2$ and $n_1<n_2$. To see why this is called a time-like direction, and give one reason why we are interested in the two-time problem, let us reinterpret the model as a discrete polynuclear growth model. It is clear from (\ref{gmn}) that
\begin{equation}\label{growth}
G(m,n)=\max(G(m-1,n), G(m,n-1))+w(m.n).
\end{equation}
Let $G(m,n)=0$ if $(m,n)\notin\mathbb{Z}_+^2$, and define the height function $h(x,t)$ by
\begin{equation}\label{hxt}
h(x,t)=G\left(\frac{t+x+1}2,\frac{t-x+1}2\right)
\end{equation}
for $x+t$ odd, and extend it to all $x\in\mathbb{R}$ by linear interpolation. Then (\ref{growth}) leads to a growth rule for $h(x,t)$ and this is the discrete time and space polynuclear growth model. We think of $x\mapsto h(x,t)$ as the height above $x$ at time $t$, and we get a random one-dimensional interface. Let the constants $c_i$ be given by (\ref{scalingconstants}).
It is known, see \cite{JoDPG}, that the rescaled process
\begin{equation}\label{hrescaled}
\mathcal{H}_T(\eta,t)=\frac{h(2c_1\eta(tT)^{2/3},2tT)-c_2tT}{c_3(tT)^{1/3}},
\end{equation}
as a process in $\eta\in\mathbb{R}$ for a fixed $t>0$, converges as $T\to\infty$ to $\mathcal{A}_2(\eta)-\eta^2$, where $\mathcal{A}_2(\eta)$ is the Airy-2-process, \cite{PrSp}. In particular,
for any fixed $\eta,t$, 
\begin{equation*}
\lim_{T\to\infty}\mathbb{P}[\mathcal{H}_T(\eta,t)\le\xi-\eta^2]=F_2(\xi)=\det(I-K_{\Ai})_{L^2(\xi,\infty)},
\end {equation*}
where $F_2$ is the Tracy-Widom distribution, and
\begin{equation*}
K_{\text{Ai}}(x,y)=\int_0^\infty \Ai(x+s)\Ai(y+s)\,ds,
\end{equation*}
is the Airy kernel. The two-time problem is concerned with the question of the correlation between heights at different times. What is the limiting joint distribution of 
$\mathcal{H}_T(\eta_1,t_1)$ and $\mathcal{H}_T(\eta_2,t_2)$ for $t_1<t_2$, as $T\to\infty$? From (\ref{hxt}), we see that this is related to understanding the correlation between last-passage times in the time-like direction. That a time separation of order $T$ is the correct order to get non-trivial correlations is quite clear if we think about how much random environment
e.g. $G(n,n)$ and $G(N,N)$, $n<N$, share. It can also be seen from the slow de-correlation phenomenon, see \cite{Fer}, \cite{CorFerPec}. Looking at (\ref{hrescaled}) we see that
we have the fluctuation exponent $1/3$ (fluctuations have order $T^{1/3}$), the spatial correlation exponent $2/3$, and we also have the time correlation exponent $1=3/3$
as explained. This is the KPZ 1:2:3 scaling. For further references and more on random growth models in the KPZ-universality class and related interacting particle systems, we refer to the survey papers \cite{BorPet}, \cite{Corw} and \cite{Quas}.

The main result of the present paper is a limit theorem for the following two-time probability.
Fix $m,M,n,N$ with $1\le m<M$ and $1\le n<N$. For $a,A\in\mathbb{Z}$, we will consider the probability
\begin{equation}\label{paA}
P(a,A)=\mathbb{P}[G(m,n)< a,\,G(M,N)<A],
\end{equation}
in the appropriate scaling limit. The result is formulated in Theorem \ref{ThMain} below.

The first studies of the two-time problem, using a non-rigorous based on the replica method, was given by Dotsenko in \cite{Dots1}, \cite{Dots2}, see also \cite{Dots3}. However, the formulas are believed not to be correct, \cite{NarDou}. The replica method has also been
used by De Nardis and Le Doussal, \cite{NarDou}, to derive very interesting results in the limit $t_1/t_2\to 1$ and, for arbitrary $t_1/t_2$, in the partial tail of the joint law of 
$\mathcal{H}_T(\eta_1,t_1)$ and $\mathcal{H}_T(\eta_2,t_2)$ when $\mathcal{H}_T(\eta_1,t_1)$ is large positive.
In \cite{Dous}, Le Doussal gives a conjecturally exact formula for the limit $t_1/t_2\to 0$. See also \cite{FerSpo} for some rigorous work on this with quantitative results for the height
correlation in the stationary case, which is not investigated here.
We will not discuss these limits although to do so would be interesting. There are very interesting experimental and numerical results on the two-time problem by K. A. Takeuchi and collaborators, see \cite{Take}, \cite{TaSa} and \cite{NaDoTa}.

Recently there has been a striking new development on the two-time problem, and more generally the multi-time problem, by J. Baik and Z. Liu, \cite{BaiLiu}. They consider the totally asymmetric simple exclusion process (TASEP) in a circular geometry, the periodic TASEP. Baik and Liu are able to give formulas for the multi-time distribution as contour integrals of Fredholm determinants, and take the scaling limit in the so-called relaxation time scale, $T=O(L^{3/2})$, where $L$ is the period. In principle their formulas include the problem studied here, but they are not able to take the scaling limit that we study in this paper. It would be interesting to understand the relation between the two approaches.
For some comments on the multi-time problem in the setting used here see Remark \ref{remmultitime}. A related problem is to understand the Markovian time evolution of the whole limiting process with some fixed initial condition, the so called KPZ-fixed point. There has recently been very interesting progress on this problem by Matetski, Quastel and Remenik,
see \cite{MaQuRe} and \cite{MaQu}.

An outline of the paper is as follows. In section \ref{secresults} we give the formula for the two-time distribution using an integral of a Fredholm determinant and state the main theorem. The main theorem is proved in section \ref{secproofmain} using a sequence of lemmas proved in sections \ref{secprooflemmas} and \ref{secasymptotics}.
In section \ref{secoldformula}, we briefly discuss the relation to the result in our previous work \cite{JoTt}.

\bigskip
\noindent
{\bf Notation} Throughout the paper $1(\cdot)$ denotes an indicator function, $\gamma_r(a)$  is a positively oriented circle of radius $r$ around the point $a$, and $\gamma_r=\gamma_r(0)$. Also, $\Gamma_c$ is the upward oriented straight line through the point $c$, $t\mapsto c+it$, $t\in\mathbb{R}$.

\section{Results}\label{secresults}

Let $0<t_1<t_2$, $\eta_1,\eta_2\in\mathbb{R}$ and $\xi_1,\xi_2\in\mathbb{R}$ be given.
Furthermore $T$ is a parameter that will tend to infinity. To formulate the scaling limit we need the constants,
\begin{equation}\label{scalingconstants}
c_0=q^{-1/3}(1+\sqrt{q})^{1/3},\quad c_1=q^{-1/6}(1+\sqrt{q})^{2/3},\quad
c_2=\frac{2\sqrt{q}}{1-\sqrt{q}},\quad c_3=\frac{q^{1/6}(1+\sqrt{q})^{1/3}}{1-\sqrt{q}}
\end{equation}
We will investigate the asymptotics of the probability distribution defined by (\ref{paA}).
The appropriate scaling is then
\begin{align}\label{scaling}
n&=t_1T-c_1\eta_1(t_1T)^{2/3},\quad m=t_1T+c_1\eta_1(t_1T)^{2/3}\\
N&=t_2T-c_1\eta_2(t_2T)^{2/3},\quad M=t_2T+c_1\eta_2(t_2T)^{2/3}\notag\\
a&=c_2t_1T+c_3\xi_1(t_1T)^{1/3}, \quad A=c_2t_2T+c_3\xi_2(t_2T)^{1/3}.\notag
\end{align}
Let $\Delta t=t_2-t_1$, and write
\begin{equation}\label{alpha}
\alpha=\left(\frac{t_1}{\Delta t}\right)^{1/3}.
\end{equation}
Introduce the notation
\begin{equation}\label{deltaeta}
\Delta\eta=\eta_2\left(\frac{t_2}{\Delta t}\right)^{2/3}-\eta_1\left(\frac{t_1}{\Delta t}\right)^{2/3},\quad \Delta\xi=\xi_2\left(\frac{t_2}{\Delta t}\right)^{1/3}-\xi_1\left(\frac{t_1}{\Delta t}\right)^{1/3}.
\end{equation}

We will now define the limiting probability function. Before we can do that we need to define some functions. Fix $\delta$ such that
\begin{equation}\label{deltacondition}
\delta>\max(\eta_1,\alpha\Delta\eta),
\end{equation}
and define
\begin{align}\label{S1}
S_1(x,y)&=-\alpha e^{(\eta_1-\delta)x+(\delta-\alpha\Delta\eta)y}\int_0^\infty e^{(\alpha\Delta\eta-\eta_1)s}
K_{\text{Ai}}(\xi_1+\eta_1^2-s,\xi_1+\eta_1^2-x)\notag\\
&\times K_{\text{Ai}}(\Delta\xi+\Delta\eta^2+\alpha s,\Delta\xi+\Delta\eta^2+\alpha y)\,ds,
\end{align}
\begin{align}\label{T1}
T_1(x,y)&=\alpha e^{(\eta_1-\delta)x+(\delta-\alpha\Delta\eta)y}
\int_{-\infty}^0 e^{(\alpha\Delta\eta-\eta_1)s}
K_{\text{Ai}}(\xi_1+\eta_1^2-s,\xi_1+\eta_1^2-x)\notag
\\&\times K_{\text{Ai}}(\Delta\xi+\Delta\eta^2+\alpha s,\Delta\xi+\Delta\eta^2+\alpha y)\,ds,
\end{align}
\begin{equation}\label{S2}
S_2(x,y)=\alpha e^{(\delta-\alpha\Delta\eta)(y-x)}K_{\text{Ai}}(\Delta\xi+\Delta\eta^2+\alpha x,\Delta\xi+\Delta\eta^2+\alpha y),
\end{equation}
and
\begin{equation}\label{S3}
S_3(x,y)=e^{(\delta-\eta_1)(y-x)}K_{\text{Ai}}(\xi_1+\eta_1^2-x,\xi_1+\eta_1^2-y).
\end{equation}
Using these, we can define the functions
\begin{equation}\label{Sxy}
S(x,y)=S_1(x,y)+1(x> 0)S_2(x,y)-S_3(x,y)1(y<0),
\end{equation}
\begin{equation}\label{Txy}
T(x,y)=-T_1(x,y)-1(x>0)S_2(x,y)+S_3(x,y)1(y< 0).
\end{equation}

Let $u$ be a complex parameter and set
\begin{equation}\label{Ru}
R(u)(x,y)=S(x,y)+u^{-1}T(x,y).
\end{equation}
Consider the space
\begin{equation}\label{Xspace}
X=L^2(\mathbb{R}_-,dx)\oplus L^2(\mathbb{R}_+,dx),
\end{equation}
and define the following matrix kernel on $X$,
\begin{equation}\label{Kuv}
K(u)(x,y)=\begin{pmatrix} R_u(x,y) & R_u(x,y)  \\
                                              uR_u(x,y) & uR_u(x,y)
                                              
                    \end{pmatrix}.
 \end{equation}
 $K(u)$ defines a trace-class operator on $X$, which we also denote by $K(u)$. Let $\gamma_r$ denote a circle around the origin of radius $r$ with positive orientation.
 We define the two-time probability distribution by
 \begin{equation}\label{ftt}
F_{\text{two-time}}(\xi_1,\eta_1;\xi_2,\eta_2;\alpha)=\frac{1}{\pii}\int_{\gamma_r}\frac 1{u-1}\det(I+K(u))_X\,du,
\end{equation}
where $r>1$.

We can now formulate our main theorem.

\begin{theorem}\label{ThMain}
Let $P(a,A)$ be defined as in (\ref{paA}) and consider the scaling (\ref{scaling}). Then,
\begin{equation}\label{limit}
\lim_{T\to\infty} P(a,A)=F_{\text{two-time}}(\xi_1,\eta_1;\xi_2,\eta_2;\alpha).
\end{equation}
\end{theorem}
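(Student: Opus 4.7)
The plan is to first establish a finite-$T$ formula having exactly the shape of (\ref{ftt}) and then carry out the scaling limit on the integrand, exchanging limit and contour integration at the end. Via the RSK correspondence the joint law of $(G(m,n),G(M,N))$ fits into the Schur-process framework: both last-passage times are largest parts of two partitions sitting at two time slices of a common interlacing particle system, which is determinantal with an extended kernel. After applying the standard generating-function identity
\begin{equation*}
1(G(M,N)<A)=\frac{1}{\pii}\oint_{\gamma_r}\frac{u^{A-G(M,N)-1}}{u-1}\,du \quad (r>1),
\end{equation*}
the weight $u^{-G(M,N)}$ can be absorbed as a multiplicative twist in the correlation kernel, which collapses the two-level extended determinant to a one-level, $u$-parametrised Fredholm determinant. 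This should yield a pre-limit formula
\begin{equation*}
P(a,A)=\frac{1}{\pii}\oint_{\gamma_r}\frac{du}{u-1}\det(I+K_T(u))
\end{equation*}
whose matrix kernel $K_T(u)$ has the same $2\times 2$ block form as (\ref{Kuv}), with $R_u=S_T+u^{-1}T_T$ splitting according to whether the intermediate integration variables lie on the $(m,n)$-side or the $(M,N)$-side of the coupled two-level process.

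Second, I would perform the asymptotic analysis of $K_T(u)$ under the scaling (\ref{scaling}). The entries of $K_T(u)$ are double contour integrals whose integrands have geometric-weight exponential factors with saddle points coalescing at the edge of the limiting spectrum; with the KPZ $1{:}2{:}3$ rescaling $x,y\sim c_3 T^{1/3}$, standard steepest-descent analysis produces the Airy kernel $K_{\Ai}$ evaluated at the shifted arguments $\xi_1+\eta_1^2-\cdot$ and $\Delta\xi+\Delta\eta^2+\alpha\cdot$ that appear in (\ref{S1})--(\ref{S3}). The gauge factor $e^{(\eta_1-\delta)x+(\delta-\alpha\Delta\eta)y}$, with $\delta$ subject to (\ref{deltacondition}), is precisely the conjugation needed to make the limit operator trace-class on $X=L^2(\mathbb{R}_-)\oplus L^2(\mathbb{R}_+)$: the decay of $K_{\Ai}$ in one direction is compensated by the exponential in the other. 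The decomposition $S=S_1+1(x>0)S_2-S_3 1(y<0)$ (and the parallel one for $T$) should emerge from how the pre-limit integration contour deforms and which residues are picked up depending on the sign of the rescaled coordinates, with the truncated pieces $S_2,S_3$ arising from residue/pole contributions on the Airy side of the saddle.

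Finally, to pass from pointwise convergence $\det(I+K_T(u))\to\det(I+K(u))$ to convergence of the $u$-integral, I would invoke dominated convergence along $\gamma_r$, which requires uniform-in-$u\in\gamma_r$ and uniform-in-$T$ trace-norm control of $K_T(u)$ together with pointwise trace-norm convergence. The main obstacle I expect is establishing these uniform trace-class bounds simultaneously with the pointwise asymptotics: the matrix kernel is non-self-adjoint, its entries involve integrals against $K_{\Ai}$ whose integrability over $X$ is marginal and depends sensitively on the gauge factor, so controlling the trace norm demands careful tail estimates that must hold uniformly in $u$ on a contour that encircles the pole at $u=1$. A secondary obstacle is deriving the finite-$T$ formula in a form amenable to these asymptotics; although the determinantal Schur-process structure is standard, collapsing the two-time extended determinant to the single-parameter form above requires identifying the correct algebraic rearrangement, and I would expect sections \ref{secprooflemmas} and \ref{secasymptotics} to split exactly along these two lines, combinatorial identity and uniform asymptotic control respectively.
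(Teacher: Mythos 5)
The main gap is at the very first step: you assert that the joint law of $G(m,n)$ and $G(M,N)$, with $m<M$ and $n<N$, sits inside a Schur-process/extended-kernel determinantal structure, and that multiplying by $u^{-G(M,N)}$ simply twists that kernel and collapses a two-level extended determinant to a one-level, $u$-parametrised one. For points ordered in the \emph{time-like} direction there is no such standard extended-kernel representation -- the Schur-process machinery gives joint laws of last-passage times only along space-like (down-right) paths, and the absence of a ready-made determinantal formula in the time-like direction is precisely what makes the two-time problem hard. So your pre-limit formula $P(a,A)=\frac 1{\pii}\oint_{\gamma_r}\frac{du}{u-1}\det(I+K_T(u))$ is unsupported as derived; it hides essentially all of the combinatorial content. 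The paper's route is different: it uses the Markov property of the whole vector $\mathbf{G}(m)=(G(m,1),\dots,G(m,N))$ with Warren-type determinantal transition functions (Proposition \ref{PropTransition}), then a summation-by-parts identity (Lemma \ref{LemSumparts}) to rearrange the two transition determinants. Crucially, the contour integral in $u$ does \emph{not} come from writing $1(G(M,N)<A)$ as a generating-function integral; the constraint $G(M,N)<A$ is handled by a direct telescoping summation (part (b) of Lemma \ref{LemSumparts}), while the $u$-integral encodes the constraint at the \emph{earlier} point, $x_n<a$, via $1\left(\sum_{j}1(x_j<0)\ge n\right)=\frac 1{\pii}\int_{\gamma_r}u^{\#\{j:x_j<0\}}u^{-n}(u-1)^{-1}du$, which restores the product structure needed to apply Cauchy--Binet (Lemma \ref{LemCB}). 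A further essential ingredient missing from your sketch is the choice of the triangular matrices in (\ref{ABdef}) built from the coefficients $\beta_k^\epsilon$ -- a kind of orthogonalization -- which is what makes $L(i,j;u)=\delta_{ij}+M_u(i,j)$ an identity-plus-small-kernel form that can be rewritten as a Fredholm determinant and attacked by steepest descent.

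Your plan for the limit transition also differs from the paper in a way worth noting: you propose dominated convergence in $u$ based on uniform-in-$T$ trace-norm control of $K_T(u)$, which would be delicate to establish. The paper avoids trace norms in the convergence step altogether: it proves pointwise convergence of the rescaled kernels (Lemma \ref{LemA1}) together with uniform exponential decay bounds (Lemma \ref{LemKernelestimates}), and then passes to the limit term by term in the Fredholm expansion using Hadamard's inequality, the convergence of the determinants being uniform for $u$ in compact sets so that the finite contour integral converges trivially; the trace-class property of the limiting operator $K(u)$ is proved separately by a Hilbert--Schmidt factorization (Lemma \ref{lemtraceclass}). Your description of the saddle-point analysis, the role of the gauge factor and the condition (\ref{deltacondition}) is broadly in line with what the paper does in section \ref{secasymptotics}, but without a correct derivation of the finite-$T$ formula the argument does not get off the ground.
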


The theorem will be proved in section \ref{secproofmain}. The fact that $K(u)$ is a trace-class operator is Lemma \ref{lemtraceclass} below.

The formula for the two-time distribution can be written in different ways. In section \ref{SecFormulas}, we will give formulas suitable for studying the limits
$\alpha\to 0$, $\alpha\to \infty$ and expansions in $\alpha$ and $1/\alpha$ respectively. We will not discuss these expansions here, but refer to \cite{DJL} for
more on this and comparison with the results in \cite{Dous}.

For comments on the relation between this formula and the formula derived in \cite{JoTt}, see the discussion in section \ref{secoldformula}. 

\begin{remark}\label{remmultitime} 
{\rm It would be interesting to be able to prove the same type of scaling limit for the multi-time case, i.e. to consider the probability function
\begin{equation*}
P(a_1,\dots,a_{L})=\mathbb{P}\left[G(m_1,n_1)<a_1,\dots,G(m_{L},n_{L})<a_{L}\right],
\end{equation*}
where  $m_1< m_2<\dots< m_L$, and $n_1< n_2<\dots< n_L$. It is possible to write a formula analogous to (\ref{paAformula4}) below but with $L-1$ contour integrals.  This can be proved in a very similar way as the proof of (\ref{paAformula4}). We hope to say more on this problem in future work.}
\end{remark}

\section{Proof of the Main theorem}\label{secproofmain}
In this section we will prove the main theorem. Along the way we will use several lemmas that will be proved in sections \ref{secprooflemmas} and \ref{secasymptotics}.

Write
\begin{equation}\label{Gm}
\mathbf{G}(m)=(G(m,1),\dots,G(m,N)),
\end{equation}
for $m\ge 0$, and a fixed $N\ge 1$. Let $\mathbf{G}(0)=0$. By $\Delta$ we denote the finite difference operator defined on functions $f:\mathbb{Z}\mapsto\mathbb{C}$ by
$\Delta f(x)=f(x+1)-f(x)$, which has the inverse
$$
\Delta^{-1} f(x)=\sum_{y=-\infty}^{x-1} f(y),
$$
for all functions $f$ for which the series converges. The negative binomial weight is
\begin{equation}\label{negbinom}
w_m(x)=(1-q)^m\binom{x+m-1}{x} q^x 1(x\ge 0),
\end{equation}
for $m\ge 1$, $x\in\mathbb{Z}$. Write
\begin{equation}\label{WN}
W_N=\{\mathbf{x}=(x_1,\dots,x_N)\in\mathbb{Z}^N\,,\,x_1\le\dots\le x_N\}.
\end{equation}
Note that $\mathbf{G}(m)\in W_N$.

The following proposition is the starting point for the proof. It is proved in \cite{JoMar} following the paper \cite{Warr} by J. Warren, see also \cite{DiWa} for a more systematic treatment.
\begin{proposition}\label{PropTransition}
The vectors $(\mathbf{G}(m))_{m\ge 0}$ form a Markov chain with transition function
\begin{equation}\label{Trans}
\mathbb{P}[\mathbf{G}(m)=\mathbf{y}\,|\,\mathbf{G}(\ell)=\mathbf{x}]=\det(\Delta^{j-i}w_{m-\ell}(y_j-x_i))_{1\le i,j\le N},
\end{equation}
for any $\mathbf{x},\mathbf{y}\in W_N$, $m>\ell\ge 0$.
\end{proposition}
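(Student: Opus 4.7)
The plan is to split the argument into two parts: first establish the Markov property together with the explicit one-step transition, then recognize this one-step transition as a determinant and extend to general $m-\ell$ via Chapman-Kolmogorov.

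For the Markov property I would read off from the recursion (\ref{growth}) that, given $\mathbf{G}(m-1)=\mathbf{x}$ and the fresh weights $(w(m,n))_{n\ge 1}$ (which are independent of everything in columns $<m$), the vector $\mathbf{G}(m)=(y_1,\dots,y_N)$ is determined recursively by
\begin{equation*}
y_n = \max(x_n, y_{n-1}) + w(m,n), \qquad n=1,\dots,N,\ y_0:=-\infty.
\end{equation*}
This shows both that $(\mathbf{G}(m))_{m\ge 0}$ is Markov and, by inserting the geometric distribution of the $w(m,n)$, that the one-step kernel is
\begin{equation*}
p(\mathbf{x},\mathbf{y}) = \prod_{n=1}^N (1-q)\,q^{y_n-\max(x_n,y_{n-1})}\, 1\bigl(y_n\ge \max(x_n,y_{n-1})\bigr),
\end{equation*}
supported on $\mathbf{y}\in W_N$.

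The heart of the proof is the identification of $p(\mathbf{x},\mathbf{y})$ with $\det(\Delta^{j-i}w_1(y_j-x_i))_{i,j=1}^N$. Here I would follow the Warren-type construction of \cite{Warr}: the growth rule (\ref{growth}) is the bottom-row marginal of a pushing-and-blocking triangular dynamics that is equivalent to $N$ independent geometric walkers conditioned (via a Doob $h$-transform with a Vandermonde-type harmonic function) to stay weakly ordered, and the transition of such a conditioned system has the Karlin-McGregor determinantal form. The shifts $\Delta^{j-i}$ arise because the weak ordering is implemented by partial summations ($\Delta^{-1}$ for $j<i$) and finite differences ($\Delta^{j-i}$ for $j>i$) of the single-walker step kernel $w_1$. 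A hands-on verification independent of the Warren formalism can be done by induction on $N$: expanding $\det(\Delta^{j-i}w_1(y_j-x_i))$ along the last row and using the support property $w_1(y)=(1-q)q^y 1(y\ge 0)$ to match the two cases $x_N>y_{N-1}$ and $x_N\le y_{N-1}$ of the product formula (the case $N=2$ is already instructive and pins down the sign structure).

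Finally, for $m-\ell>1$ I would iterate by Chapman-Kolmogorov and collapse the resulting sum of products of determinants using Cauchy-Binet, together with the convolution identity $w_r*w_s=w_{r+s}$ (sum of independent negative binomials is negative binomial) and the commutation $\Delta^{k-i}(w_r*w_s)=(\Delta^{k-j}w_r)*(\Delta^{j-i}w_s)$; convergence of the infinite sums implicit in $\Delta^{-1}$ is ensured by the exponential decay of the kernels. The main obstacle is the single-step determinantal identification: converting the piecewise product kernel into the uniform form $\det(\Delta^{j-i}w_1(y_j-x_i))$ really requires the structural input of the Warren construction, and everything else is mechanical once that identity is established.
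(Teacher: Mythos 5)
Your reduction of the problem is sound as far as it goes: the recursion $y_n=\max(x_n,y_{n-1})+w(m,n)$ together with the independence of the fresh column weights does give both the Markov property and the product form of the one-step kernel. Note, however, that the paper itself does not prove Proposition \ref{PropTransition}; it quotes it from \cite{JoMar}, where it is established following Warren's construction \cite{Warr} (see also \cite{DiWa}), so the real question is whether your outline fills that gap, and at present it does not.

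Two steps you treat as available or ``mechanical'' are exactly where the substance lies. First, your description of the Warren-type input is not accurate: $\mathbf{G}(m)$ evolves by sequential long-range pushing, it is \emph{not} a Doob $h$-transform of independent geometric walkers, and the kernel $\det(\Delta^{j-i}w_{m-\ell}(y_j-x_i))$ is not of Karlin--McGregor/Doob form $h(\mathbf{y})h(\mathbf{x})^{-1}\det(w_{m-\ell}(y_j-x_i))$; it is a Sch\"utz-type formula, obtained in \cite{JoMar}, \cite{DiWa} from an intertwining with dynamics on triangular arrays (or by direct verification), not from conditioning. So the mechanism you invoke cannot, as stated, produce the identity; you must either actually carry out the induction on $N$ you allude to (it can be done, but it is the heart of the proof and is only asserted, with the $N=2$ case ``instructive''), or reproduce the intertwining argument. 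Second, the semigroup step is not mechanical: in $\sum_{\mathbf{z}\in W_N}\det\bigl(\Delta^{j-i}w_r(z_j-x_i)\bigr)\det\bigl(\Delta^{l-k}w_s(y_l-z_k)\bigr)$ the entries depend on both the row and the column index through the exponent of $\Delta$, so Cauchy--Binet/Andr\'eief does not apply directly, and the convolution identity you quote only handles the entrywise sum. One must first equalize the exponents by repeated summation by parts over the ordered configurations --- precisely the content of Lemma \ref{LemSumparts}(a) of this paper (generalizing Lemma 3.2 of \cite{JoMar}), applied with the constraint removed, e.g.\ by letting $a\to\infty$ --- after which configurations with ties contribute zero, the weakly ordered sum becomes a free sum, and Cauchy--Binet together with $w_r*w_s=w_{r+s}$ and the commutation of $\Delta$ with convolution yields $\det(\Delta^{j-i}w_{r+s}(y_j-x_i))$. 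With the one-step identity actually proved and the composition repaired along these lines your plan becomes a valid proof, but as written both of these key points are asserted rather than established.
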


Write
\begin{equation}\label{deltam}
\Delta m=M-m,\quad\Delta N=N-n,\quad \Delta a=A-a,
\end{equation}
and
\begin{equation*}
W_{N,n}(a)=\{\mathbf{x}\in W_N;\,x_n< a\}.
\end{equation*}
We can the write
\begin{equation}\label{paAformula}
P(a,A)=\sum_{\mathbf{x}\in W_{N,n}(a)}\sum_{\mathbf{y}\in W_{N,N}(A)}\det(\Delta^{j-i}w_m(x_j))_{1\le i,j\le N}\det(\Delta^{j-i}w_{\Delta m}(y_j-x_i))_{1\le i,j\le N}.
\end{equation}
Here we would like to perform the sum over $\mathbf{y}$, which is straightforward, and then the sum over $\mathbf{x}$, which is tricky since we cannot use the Cauchy-Binet
identity directly. An important step is part a) of the following lemma, which is proved in section \ref{secprooflemmas}. The proof of (\ref{Partsum}) uses successive summations by parts and
generalizes the proof of Lemma 3.2 in \cite{JoMar}.

\begin{lemma}\label{LemSumparts} Let $f,g:\mathbb{Z}\mapsto\mathbb{R}$ be given functions and assume that there is an $L\in\mathbb{Z}$ such that $f(x)=g(x)=0$ if $x<L$.

(a) Let $a_i,d_i\in\mathbb{Z}$, $1\le i\le N$ and fix $k$, $1\le k\le N$. Then,
\begin{align}\label{Partsum}
&\sum_{\mathbf{x}\in W_{N,k}(a)}\det\big(\Delta^{j-a_i}f(x_j-y_i)\big)_{1\le i,j\le N}\det\big(\Delta^{d_i-j}g(z_i-x_j)\big)_{1\le i,j\le N}\\
=&\sum_{\mathbf{x}\in W_{N,k}(a)}\det\big(\Delta^{k-a_i}f(x_j-y_i)\big)_{1\le i,j\le N}\det\big(\Delta^{d_i-k}g(z_i-x_j)\big)_{1\le i,j\le N}.\notag
\end{align}

(b) For $1\le n\le N$, we have the identity
\begin{equation}\label{secondsum}
\sum_{\mathbf{x}\in W_{N,N}(A)}\det\big(\Delta^{i-n}w_m(x_i-y_j)\big)_{1\le i,j\le N}=\det\big(\Delta^{i-n-1}w_m(A-y_j)\big)_{1\le i,j\le N}.
\end{equation}
\end{lemma}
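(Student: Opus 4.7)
My plan is to prove part (b) first by iterative Abel summation and then to leverage the same technique for part (a).

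For part (b), I sum out the variables $x_N, x_{N-1}, \ldots, x_1$ one at a time from the top down. Abel summation over $x_N \in [x_{N-1}, A-1]$ gives
\[
\sum_{x_N=x_{N-1}}^{A-1} \Delta_{x_N}^{N-n} w_m(x_N - y_j)
= \Delta^{N-n-1} w_m(A-y_j) - \Delta^{N-n-1} w_m(x_{N-1}-y_j),
\]
valid for all integer $N-n$ by the vanishing of $w_m$ at $-\infty$. Inserting this into the determinant by multilinearity in row $N$, the "$A$-term" produces the target row of the right-hand side while the "$x_{N-1}$-term" coincides with row $N-1$ (both equal $\bigl(\Delta^{(N-1)-n} w_m(x_{N-1}-y_j)\bigr)_j$) and so vanishes by row degeneracy. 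Iterating on $x_{N-1}, \ldots, x_1$ promotes each row successively to $\Delta^{i-n-1} w_m(A-y_j)$, yielding (\ref{secondsum}).

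For part (a), I plan to prove the claim as a consequence of a column-wise $\Delta$-transfer invariance: for each $j \neq k$, the sum is unchanged when the column-$j$ exponent pair is shifted from $(j-a_i,\, d_i-j)$ to $((j-1)-a_i,\, d_i-(j-1))$, transferring one $\Delta$ from the first determinant to the second. Iterating this transfer brings each column's exponent pair to $(k-a_i,\, d_i-k)$, producing the right-hand side of (\ref{Partsum}).

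The transfer is proved by cofactor-expanding both determinants along column $j$, reducing the claim to a one-variable identity over $x_j \in [x_{j-1}, x_{j+1}]$. Writing $\phi_{ij}(x) = \Delta^{j-a_i} f(x-y_i) = \Delta_x \tilde\phi_{ij}(x)$ with $\tilde\phi_{ij}(x) = \Delta^{(j-1)-a_i} f(x-y_i)$, and using the identities $\Delta_x\psi_{i'j}(x) = -\tilde\psi_{i'j}(x+1)$ and $\tilde\psi_{i'j}(x)+\psi_{i'j}(x) = \psi_{i'j}(x-1)$ where $\psi_{i'j}(x) = \Delta^{d_{i'}-j} g(z_{i'}-x)$ and $\tilde\psi_{i'j}(x) = \Delta^{d_{i'}-(j-1)} g(z_{i'}-x)$, Abel summation converts the bulk term into the shifted product $\tilde\phi_{ij}\tilde\psi_{i'j}$ and leaves boundary contributions at $x_j = x_{j\pm 1}$. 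These boundary pieces, resummed against the cofactors $M_{ij}M'_{i'j}$, assemble into determinants in which two adjacent columns coincide and therefore vanish. The constraint $x_k < a$ is never disturbed because we only Abel-sum on $x_j$ for $j \neq k$.

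The main obstacle is verifying the boundary cancellation rigorously: the argument shifts $x_{j\pm 1} \mapsto x_{j\pm 1} \pm 1$ produced by Abel summation must be tracked carefully, and one must process the columns in an appropriate outward order (from $j = N, N-1, \ldots, k+1$ and from $j = 1, 2, \ldots, k-1$) so that adjacent columns have matching exponent structure at the instant of transfer, ensuring the degeneracy argument applies at every step of the iteration.
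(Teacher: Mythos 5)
Your part (b) is exactly the paper's argument: telescope the last row over $x_N\in[x_{N-1},A-1]$, cancel the $x_{N-1}$-boundary term against row $N-1$, and iterate downward through the rows; the remark that negative powers of $\Delta$ are handled by the vanishing of $w_m$ far to the left is correct and is all that is needed there.

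For part (a) your mechanism --- summation by parts in a single variable $x_j$, with the two boundary terms killed because two adjacent columns of one of the determinants coincide --- is precisely the paper's, but the iteration you describe does not work as stated, and this is exactly the point you flag as unverified. The one-step transfer at column $\ell$ is legitimate only when, at that instant, the neighbouring exponents match: the boundary term at $x_\ell\to x_{\ell-1}$ vanishes only if the first determinant's column $\ell-1$ exponent equals the just-lowered column $\ell$ exponent, and the boundary term at $x_\ell\to x_{\ell+1}$ vanishes only if the second determinant's columns $\ell$ and $\ell+1$ carry the same exponent (or $\ell=N$, where it vanishes because $g$ vanishes far to the left). Consequently you cannot take a column $j>k$ and push its exponent pair all the way to $(k-a_i,\,d_i-k)$ before touching the other columns: already the second transfer at column $N$ fails, because after one step column $N-1$ no longer has exponent one less than column $N$, so the lower boundary determinant no longer degenerates. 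The schedule that works is the paper's nested sweeps: transfer one unit at each of $x_N,x_{N-1},\dots,x_{k+1}$ in that order, then one unit at $x_N,\dots,x_{k+2}$, and so on, ending with $x_N$ alone; each sweep restores the adjacency pattern needed for the next. Mirror-image sweeps handle the columns $j<k$, where the transfer must go the opposite way (from the second determinant to the first, raising the first exponent) --- as literally written, your shift $(j-a_i,\,d_i-j)\to((j-1)-a_i,\,d_i-(j-1))$ moves those columns away from the target. Finally, the paper also freezes $x_k$ by slicing $W_{N,k}(a)$ into the sets $\{\mathbf{x}:x_k=t\}$, which is the clean form of your observation that the constraint is undisturbed because $x_k$ is never summed; with that slicing and the corrected sweep order, your outline becomes the paper's proof.
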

If we use (\ref{Partsum}) and (\ref{secondsum}) in (\ref{paAformula}), we find
\begin{equation}\label{paAformula2}
P(a,A)=\sum_{\mathbf{x}\in W_{N,n}(a)}\det\big(\Delta^{n-i}w_m(x_j)\big)_{1\le i,j\le N}\det\big(\Delta^{j-n-1}w_{\Delta m}(A-x_i)\big)_{1\le i,j\le N}.
\end{equation}

Before we show how we can use the Cauchy-Binet identity to do the summation in (\ref{paAformula2}), we will modify it somewhat. Below, this modification will be a kind of
orthogonalization procedure, and will be important for obtaining a Fredholm determinant. Let $A=(a_{ij})$ and $B=(b_{ij})$ be two $N\times N$-matrices that satisfy
$a_{ij}=0$ if $j>i$ and $b_{ij}=0$ if $j<i$, so that $A$ is lower- and $B$ upper-triangular. Assume that
\begin{equation}\label{detAB}
\det AB=\prod_{i=1}^Na_{ii}b_{ii}=1.
\end{equation}
For $x\in\mathbb{Z}$, $1\le i,j\le N$, we define
\begin{equation}\label{f01}
f_{0,1}(i,x)=\sum_{k=1}^Na_{ik}(-1)^n\Delta^{n-k}w_m(x+a),
\end{equation}
and
\begin{equation}\label{f12}
f_{1,2}(x,j)=\sum_{k=1}^N(-1)^n\Delta^{k-1-n}w_{\Delta m}(\Delta a-x)b_{kj},
\end{equation}
where $w_m$ is the negative binomial weight (\ref{negbinom}).
If we shift $x_i\to x_i+a$, $1\le i\le N$, in (\ref{paAformula2}), and use (\ref{detAB}), (\ref{f01}) and (\ref{f12}), we get
\begin{equation}\label{paAformula3}
P(a,A)=\sum_{\mathbf{x}\in W_{N,n}(0)}\det\big(f_{0,1}(i,x_j)\big)_{1\le i,j\le N}\det\big(f_{1,2}(x_i,j)\big)_{1\le i,j\le N}.
\end{equation}
This formula is the basis for the next lemma, the proof of which is based on the Cauchy-Binet identity. However, because of the restriction $x_n< 0$ in the summation in
(\ref{paAformula3}), we cannot apply the identity directly. In order to state the result we need some further notation. Define
\begin{equation}\label{L1}
L_1(i,j)=\sum_{x=-\infty}^{-1}f_{0,1}(i,x)f_{1,2}(x,j),
\end{equation}
\begin{equation}\label{L2}
L_2(i,j)=\sum_{x=0}^{\infty}f_{0,1}(i,x)f_{1,2}(x,j).
\end{equation}
Let $u$ be a complex parameter and set
\begin{equation}\label{Lijuv}
L(i,j;u)=u^{1(i>n)}L_1(i,j)+u^{-1(i\le n)}L_2(i,j).
\end{equation}

\begin{lemma}\label{LemCB}
We have the formula,
\begin{equation}\label{paAformula4}
P(a,A)=\frac 1{\pii}\int_{\gamma_r}\frac 1{u-1}\det\big(L(i,j;u)\big)_{1\le i,j\le N}du,
\end{equation}
for any $r>1$.
\end{lemma}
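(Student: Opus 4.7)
The plan is to factor out a common $u$-dependence from the rows of $L(\cdot,\cdot;u)$ so that a single application of Cauchy--Binet converts $\det L(i,j;u)$ into a sum over strictly increasing configurations $x_1<\dots<x_N$, and then to use the contour integral in $u$ to extract precisely the configurations on which $x_n<0$.

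The key observation is that the rows of $L(i,j;u)$ all have the same $u$-structure up to an overall factor: for $i\le n$ we have $L(i,j;u)=L_1(i,j)+u^{-1}L_2(i,j)$, while for $i>n$,
\[
L(i,j;u)=uL_1(i,j)+L_2(i,j)=u\bigl(L_1(i,j)+u^{-1}L_2(i,j)\bigr).
\]
Pulling the factor $u$ out of each of the $N-n$ bottom rows gives $\det L(i,j;u)=u^{N-n}\det\tilde L(i,j;u)$, where
\[
\tilde L(i,j;u)=\sum_{x\in\mathbb{Z}}f_{0,1}(i,x)\phi_u(x)f_{1,2}(x,j),\qquad \phi_u(x)=1(x<0)+u^{-1}1(x\ge 0).
\]
A single Cauchy--Binet expansion then yields
\[
\det L(i,j;u)=\sum_{x_1<\dots<x_N}u^{N-n}\prod_{i=1}^N\phi_u(x_i)\,\det(f_{0,1}(i,x_j))\det(f_{1,2}(x_i,j)).
\]
Writing $k=\#\{i:x_i<0\}$, the product of $\phi_u$'s equals $u^{-(N-k)}$, and the $u$-power collapses to $u^{k-n}$.

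To finish, evaluate $\frac{1}{\pii}\int_{\gamma_r}\frac{u^{k-n}}{u-1}\,du$ for $r>1$. The integrand is holomorphic outside $\{0,1\}$ and has a simple pole at $u=1$ with residue $1$. For $k\ge n$ there is no pole at $0$, so the integral equals $1$; for $k<n$ there is a pole of order $n-k$ at $0$, but the integrand decays like $|u|^{-2}$ (or faster) at infinity, so the sum of the two residues vanishes and the integral equals $0$. Integrating the formula above term by term therefore retains exactly the configurations with $x_n<0$, and since coincidences $x_i=x_{i+1}$ kill $\det(f_{0,1}(i,x_j))$, the strict and weak inequality sums agree. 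Comparing with (\ref{paAformula3}) gives (\ref{paAformula4}).

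The only non-trivial point to justify is absolute convergence of the sum in $x$, needed both for Cauchy--Binet and for interchanging the sum with the contour integral. This follows from decay estimates for $f_{0,1}(i,x)$ and $f_{1,2}(x,j)$: since the negative binomial weight $w_m$ and its finite differences have geometric decay on their support and the triangular matrices $A,B$ are finite, the entries $L_1(i,j)$ and $L_2(i,j)$ are absolutely convergent sums and the above rearrangements are routine.
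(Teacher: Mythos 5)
Your proposal is correct and is essentially the paper's own argument run in reverse: the paper starts from (\ref{paAformula3}), encodes the constraint $x_n<0$ by the contour identity $\frac 1{\pii}\int_{\gamma_r}u^{\#\{j:x_j<0\}-n}(u-1)^{-1}du=1(\#\{j:x_j<0\}\ge n)$, writes $u^{\#\{j:x_j<0\}}=\prod_j(u1(x_j<0)+1(x_j\ge 0))$ and resums via Cauchy--Binet to reach $\det(L(i,j;u))$, whereas you expand $\det(L(i,j;u))$ by Cauchy--Binet and evaluate the $u$-integral term by term. The ingredients (the weight $u1(x<0)+1(x\ge 0)$, the row rescaling producing $u^{-1(i\le n)}$, the residue computation, and the vanishing of the determinants at coincidences) are the same, so this is the same proof in the opposite direction.
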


The lemma is proved in section \ref{secprooflemmas}. The contour integral come from the need to capture the restriction $x_n<0$ and still use the Cauchy-Binet
identity.

We now come to the choice of the matrices $A$ and $B$. The aim is to get a good formula for $f_{0,1}$ and $f_{1,2}$ and make it possible to write the determinant in 
(\ref{paAformula4}) as a Fredholm determinant suitable for asymptotic analysis. Define
\begin{equation}\label{Hnmx}
H_{n,m,x}(w)=\frac{w^n(1-w)^{x+m}}{\big(1-\frac{w}{1-q}\big)^m}.
\end{equation}
Using a generating function for the negative binomial weight (\ref{negbinom}), it is straightforward to show that for all $m\ge 1$, $k,x\in\mathbb{Z}$,
\begin{equation}\label{Deltakwm}
\Delta^n w_m(x)=\frac{(-1)^{k-1}}{\pii}\int_{\gamma_r}H_{n,m,x}(z)\frac{dz}{1-z},
\end{equation}
if $r>1$. For $k,x\in\mathbb{Z}$, $m\ge 1$, $\epsilon\in\{0,1\}$ and $0<\tau<1$, we define
\begin{equation}\label{beta}
\beta_k^{\epsilon}(m,a)=\frac 1{\pii}\int_{\gamma_\tau}\zeta^{k-1}\frac{\left(1-\frac{\zeta}{1-q}\right)^m}{(1-\zeta)^{a+m-\epsilon}}d\zeta.
\end{equation}
Note that $\beta_0^{\epsilon}=1$ and $\beta_k^{\epsilon}=0$ if $k\ge 1$. By expanding $(z-\zeta)^{-1}$ in powers of $\zeta/z$, we see that
\begin{equation}\label{betaformula}
\sum_{k=1}^N\frac{\beta_{k-i}^{\epsilon}(m,a)}{z^k}=\frac 1{\pii}\int_{\gamma_\tau}\frac{(1-\zeta)^\epsilon}{H_{i,m,a}(\zeta)(z-\zeta)}\,d\zeta,
\end{equation}
provided $|z|>\tau$. 

We now define the matrices $A$ and $B$. Let $c(i)$ be a conjugation factor defined below in (\ref{conjfactor}) which we need to make the asymptotic analysis work. Set
\begin{equation}\label{ABdef}
a_{ik}=c(i)(-1)^{-k}\beta_{k-i}^1(m,a),\quad b_{kj}=c(j)^{-1}(-1)^k\beta_{j-k}^0(\Delta m,\Delta a).
\end{equation}
From the properties of $\beta_k^\epsilon$, we see that $(a_{ik})$ is lower- and $(b_{kj})$ upper-triangular, and that the condition (\ref{detAB}) is satisfied.

\begin{lemma}\label{Lemf01f12}
If $f_{0,1}$ and $f_{1,2}$ are defined by (\ref{f01}) and (\ref{f12}) respectively, and $a_{ik}$ and $b_{kj}$ by (\ref{ABdef}), then
\begin{equation}\label{f01formula}
f_{0,1}(i,x)=-\frac {c(i)}{(\pii)^2}\int_{\gamma_r}dz\int_{\gamma_\tau}d\zeta\frac{H_{n,m,a+x}(z)(1-\zeta)}{H_{i,m,a}(\zeta)(z-\zeta)(1-z)},
\end{equation}
\begin{equation}\label{f12formula}
f_{1,2}(x,j)=\frac {c(j)^{-1}}{(\pii)^2}\int_{\gamma_r}dw\int_{\gamma_\tau}d\omega\frac{H_{\Delta n,\Delta m,\Delta a-x}(w)}{H_{N+1-j,\Delta m,\Delta a}(\omega)(w-\omega)(1-w)},
\end{equation}
where $0<\tau<1<r$.
\end{lemma}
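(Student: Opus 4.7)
\begin{Proof}
The plan is a direct computation: substitute the contour representation \eqref{Deltakwm} for the differences $\Delta^{n-k}w_m$ and $\Delta^{k-1-n}w_{\Delta m}$, swap sums and integrals, collect powers of the outer integration variable, and recognise the resulting inner sum as the left-hand side of \eqref{betaformula}, which lets us rewrite it as the inner $\zeta$- or $\omega$-integral.

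\emph{Computation of $f_{0,1}$.} I would begin by inserting the definition of $a_{ik}$ from \eqref{ABdef} into \eqref{f01} to get
\begin{equation*}
f_{0,1}(i,x)=c(i)(-1)^n\sum_{k=1}^N (-1)^{-k}\beta_{k-i}^1(m,a)\,\Delta^{n-k}w_m(x+a).
\end{equation*}
Next, use \eqref{Deltakwm} to replace $\Delta^{n-k}w_m(x+a)$ by a contour integral over $\gamma_r$ of $H_{n-k,m,a+x}(z)/(1-z)$, with prefactor $(-1)^{n-k-1}/(2\pi \mathrm i)$. The three sign factors $(-1)^n(-1)^{-k}(-1)^{n-k-1}$ collapse to $-1$, and the $z$-dependent piece of $H_{n-k,m,a+x}$ factors as $z^{-k}H_{n,m,a+x}(z)$. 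After pulling the $z$-integral outside the sum this yields
\begin{equation*}
f_{0,1}(i,x)=-\frac{c(i)}{\pii}\int_{\gamma_r}\frac{H_{n,m,a+x}(z)}{1-z}\Biggl(\sum_{k=1}^N\frac{\beta_{k-i}^1(m,a)}{z^k}\Biggr)\,dz.
\end{equation*}
Since $r>\tau$, identity \eqref{betaformula} with $\epsilon=1$ applies and converts the parenthesised sum into $\frac1{\pii}\int_{\gamma_\tau}(1-\zeta)/[H_{i,m,a}(\zeta)(z-\zeta)]\,d\zeta$, producing exactly \eqref{f01formula}.

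\emph{Computation of $f_{1,2}$.} The analogous insertion of $b_{kj}$ into \eqref{f12} gives
\begin{equation*}
f_{1,2}(x,j)=c(j)^{-1}(-1)^n\sum_{k=1}^N (-1)^k\beta_{j-k}^0(\Delta m,\Delta a)\,\Delta^{k-1-n}w_{\Delta m}(\Delta a-x),
\end{equation*}
and the same application of \eqref{Deltakwm} — now with the index shifted by $k-1-n$ — makes the signs cancel completely and isolates the factor $w^{k-1-n}$. The remaining step is the reason this case needs an extra moment: the sum that appears is $\sum_{k=1}^N w^{k-1-n}\beta_{j-k}^0(\Delta m,\Delta a)$, which is not directly in the form \eqref{betaformula}. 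I would reverse the summation index via $\ell=N+1-k$, so that it becomes $w^{\Delta n}\sum_{\ell=1}^N \beta_{\ell-(N+1-j)}^0(\Delta m,\Delta a)/w^\ell$, and then apply \eqref{betaformula} with $\epsilon=0$ and $i=N+1-j$. Absorbing the factor $w^{\Delta n}$ into $H_{0,\Delta m,\Delta a-x}(w)$ to form $H_{\Delta n,\Delta m,\Delta a-x}(w)$ yields \eqref{f12formula}.

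The argument is essentially bookkeeping; the only place that requires any thought is the index reversal in the $f_{1,2}$ computation, which is forced because $b_{kj}$ carries the $\beta$-index as $j-k$ rather than $k-j$. Throughout, the outer contour $\gamma_r$ with $r>1$ is chosen so that $|z|,|w|>\tau$ holds on it, which is precisely the condition needed to invoke \eqref{betaformula}. No interchange of sum and integral is delicate since each sum is finite (recall $\beta_k^\epsilon=0$ for $k\ge 1$, so only finitely many terms contribute).
\end{Proof}
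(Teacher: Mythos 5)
Your proposal is correct and follows essentially the same route as the paper: insert the definitions of $a_{ik}$, $b_{kj}$, replace the finite differences by the contour representation (\ref{Deltakwm}), factor out the relevant power of the outer variable, and apply (\ref{betaformula}) (with the same reindexing $k\mapsto N+1-k$ in the $f_{1,2}$ case that the paper performs when writing the sum as $\sum_k\beta^0_{j-(N+1-k)}/w^k$). The sign bookkeeping and the use of $r>1>\tau$ to justify (\ref{betaformula}) match the paper's computation.
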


The proof of the lemma, which will be given in section \ref{secprooflemmas}, is a straightforward computation using the definitions and (\ref{betaformula}).

We now turn to rewriting the determinant in (\ref{paAformula4}) as a Fredholm determinant and performing the asymptotic analysis. The conjugation factor $c(i)$ in (\ref{ABdef})
is given by
\begin{equation}\label{conjfactor}
c(i)=(1-\sqrt{q})^i e^{-\delta i/c_0(t_1T)^{1/3}},
\end{equation}
where $\delta>0$ is fixed, and satisfies (\ref{deltacondition}), and $c_1$ is given by (\ref{scalingconstants}). Let $\tau_1, \tau_2,\rho_1,\rho_2$ and $\rho_3$ be radii such that
\begin{equation}\label{radii}
0<\tau_1,\tau_2<1-\rho_1<1-\rho_2<1-\rho_3<1-q.
\end{equation}
We denote by $\gamma_\rho(1)$ a positively oriented circle around the point $1$ with radius $\rho$. For $\epsilon\in\{0,1\}$ and $1\le i,j\le N$, we define
\begin{equation}\label{A1}
A_1(i,j)=\frac{c(i)}{c(j)(\pii)^4}\int_{\gamma_{\rho_1}(1)}dz\int_{\gamma_{\rho_2}(1)}dw\int_{\gamma_{\tau_1}}d\zeta\int_{\gamma_{\tau_2}}d\omega
\frac{H_{n,m,a}(z)H_{\Delta n,\Delta m,\Delta a}(w)(1-\zeta)(1-z)^{-1}}{H_{i,m,a}(\zeta)H_{N+1-j,\Delta m,\Delta a}(\omega)(z-\zeta)(w-\omega)(z-w)},
\end{equation}
\begin{equation}\label{B1}
B_1(i,j)=\frac{c(i)}{c(j)(\pii)^4}\int_{\gamma_{\rho_3}(1)}dz\int_{\gamma_{\rho_2}(1)}dw\int_{\gamma_{\tau_1}}d\zeta\int_{\gamma_{\tau_2}}d\omega
\frac{H_{n,m,a}(z)H_{\Delta n,\Delta m,\Delta a}(w)(1-\zeta)^(1-z)^{-1}}{H_{i,m,a}(\zeta)H_{N+1-j,\Delta m,\Delta a}(\omega)(z-\zeta)(w-\omega)(z-w)},
\end{equation}
\begin{equation}\label{A2}
A_2(i,j)=\frac{c(i)}{c(j)(\pii)^2}\int_{\gamma_{\rho_2}(1)}dw\int_{\gamma_{\tau_2}}d\omega
\frac{H_{N-i,\Delta m,\Delta a}(w)}{H_{N+1-j,\Delta m,\Delta a}(\omega)(w-\omega)},
\end{equation}
and
\begin{equation}\label{A3}
A_3(i,j)=\frac{c(i)}{c(j)(\pii)^2}\int_{\gamma_{\rho_1}(1)}dz\int_{\gamma_{\tau_1}}d\zeta
\frac{H_{j-1,m,a}(z)(1-\zeta)}{H_{i,m,a}(\zeta)(z-\zeta)(1-z)}.
\end{equation}
We also define, for $\epsilon\in\{0,1\}$ and $1\le i,j\le N$,
\begin{equation}\label{Cepsilon}
C(i,j)=A_1(i,j)-1(i>n)A_2(i,j)+A_3(i,j)1(j\le n),
\end{equation}
\begin{equation}\label{Depsilon}
D(i,j)=-B_1(i,j)+1(i>n)A_2(i,j)-A_3(i,j)1(j\le n),
\end{equation}
compare with (\ref{Sxy}) and (\ref{Txy}). 

We can now express $L_p$, $p=1,2$, in terms of these objects.
\begin{lemma}\label{LemLformulas} We have the formulas
\begin{equation}\label{L1delta}
L_1(i,j)=1(i\le n)\delta_{ij}+C(i,j),
\end{equation}
and
\begin{equation}\label{L2delta}
L_2(i,j)=1(i>n)\delta_{ij}+D(i,j).
\end{equation}
\end{lemma}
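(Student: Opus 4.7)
The plan is to substitute the integral representations (\ref{f01formula}) and (\ref{f12formula}) for $f_{0,1}$ and $f_{1,2}$ into the definitions (\ref{L1}) and (\ref{L2}), sum over $x$ as a geometric series, and then bring the resulting four-fold contour integral into the stated form by deforming contours and collecting residues.

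The key observation is that, after substitution, all the $x$-dependence of $f_{0,1}(i,x)f_{1,2}(x,j)$ sits in the single factor $((1-z)/(1-w))^{x}$, since $H_{n,m,a+x}(z)=(1-z)^{x}H_{n,m,a}(z)$ and $H_{\Delta n,\Delta m,\Delta a-x}(w)=(1-w)^{-x}H_{\Delta n,\Delta m,\Delta a}(w)$. The series $\sum_{x\ge 0}$ converges to $(1-w)/(z-w)$ when $|1-z|<|1-w|$, while $\sum_{x\le -1}$ converges to $(1-w)/(w-z)$ when $|1-w|<|1-z|$; in both cases the resulting $(1-w)$ cancels the $1/(1-w)$ present in $f_{1,2}$. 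A direct comparison with (\ref{A1})--(\ref{B1}) then shows that the summed four-fold integrand is exactly that of $A_1$ when $z$ lies inside $w$ on nested small circles around $1$, and exactly that of $B_1$ when $z$ lies outside $w$. To legitimise the geometric summation, I first deform the $z$- and $w$-contours in $f_{0,1},f_{1,2}$ from the original $\gamma_r$ with $r>1$ to these small circles around $1$ (with the inside/outside ordering dictated by the convergence regime); on these contours the summed integral equals $-A_1(i,j)$ for $L_2$ and $+B_1(i,j)$ for $L_1$, producing the $A_1$ in $C$ and the $-B_1$ in $D$.

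The remaining terms in (\ref{L1delta})--(\ref{L2delta}) come from the residues swept up in the contour deformations. Pushing the $z$-contour past the simple pole at $z=\zeta$ gives the residue $\zeta^{n-i}(1-\zeta)^{x}$; the subsequent $\zeta$-integral on $\gamma_{\tau_1}$ extracts the coefficient of $\zeta^{i-n-1}$ in $(1-\zeta)^{x}$ and is therefore nonzero only when $i>n$. Coupling this with the unaltered $w,\omega$ integrations on the $f_{1,2}$-side and performing the geometric $x$-summation reproduces $1(i>n)A_2$ (with a minus sign in $C$ and a plus sign in $D$); the symmetric residue at $w=\omega$ in $f_{1,2}$ yields $1(j\le n)A_3$ in the same way via (\ref{betaformula}). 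The remaining residues at $z=1-q$ and $w=1-q$, which are of order $m$ and $\Delta m$ respectively, combine via the convolution identity $w_m\ast w_{\Delta m}=w_M$ for the negative binomial weight together with the algebraic condition $\det(AB)=1$ built into (\ref{ABdef}) to give the pure $\delta_{ij}$ contribution; the split $1(i\le n)$ in (\ref{L1delta}) versus $1(i>n)$ in (\ref{L2delta}) reflects which half of the $x$-sum this residue attaches to. The main technical obstacle will be the meticulous bookkeeping of signs and contour orientations through all of these deformations, and in particular the verification that the high-order residues at $z=1-q$ and $w=1-q$ collapse precisely to $\delta_{ij}$ rather than surviving as genuine additional terms --- a cancellation which is exactly what the identity (\ref{detAB}) was set up to guarantee.
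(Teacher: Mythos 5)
Your opening steps coincide with the paper's: substitute \eqref{f01formula}--\eqref{f12formula} into \eqref{L1}--\eqref{L2}, observe that the $x$-dependence is the single factor $((1-z)/(1-w))^x$, sum it geometrically, and generate $A_2$, $A_3$ from the residues at $z=\zeta$ and $w=\omega$. But your account of where the identity terms $1(i\le n)\delta_{ij}$ and $1(i>n)\delta_{ij}$ come from is wrong. By \eqref{radii} we have $q<\rho_3<\rho_2<\rho_1$, so the point $1-q$ lies \emph{inside} every contour that occurs ($\gamma_r$, the large circles about $1$, and all the $\gamma_{\rho_k}(1)$); none of the deformations ever crosses the poles at $z=1-q$ or $w=1-q$, so there are no such residues to collect, and no convolution identity $w_m\ast w_{\Delta m}$ plays any role in this lemma. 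The $\delta_{ij}$ in fact comes from the term in which \emph{both} the $z=\zeta$ and $w=\omega$ poles have been used: after the $\omega$-integral produces $1(j\le n)z^{-(n+1-j)}$ (resp.\ the $\zeta$-integral produces $-1(i>n)w^{-(i-n)}$), crossing the remaining pole leaves the elementary integral $\frac{1}{\pii}\int_{\gamma_{\tau_1}}\zeta^{\,i-j-1}\,d\zeta=\delta_{ij}$ for $L_1$, and $\frac{1}{\pii}\int_{\gamma_{\tau_2}}\omega^{\,j-i-1}\,d\omega=\delta_{ij}$ for $L_2$. Your bookkeeping omits this double-residue term entirely and substitutes a spurious mechanism, so the "main technical obstacle" you anticipate (collapsing high-order residues at $1-q$ via \eqref{detAB}) is one you would never meet, while the term that actually produces $\delta_{ij}$ is unaccounted for.

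There is also a genuine problem with your order of operations. If you deform \emph{both} the $z$- and $w$-contours to the small circles about $1$ before summing, the product $f_{0,1}f_{1,2}$ splits into four pieces, and the geometric summation fails for the cross pieces: e.g.\ for $L_2$ the piece pairing the $z=\zeta$ residue (where $|1-\zeta|\ge 1-\tau_1$) with $w$ on $\gamma_{\rho_2}(1)$ (where $|1-w|=\rho_2<1-q$) has ratio of modulus larger than one, so $\sum_{x\ge 0}$ diverges; for $L_1$ the piece pairing $z\in\gamma_{\rho_1}(1)$ with the $w=\omega$ residue diverges for the same reason with the inequality reversed. These divergences cancel only between pieces, so term-by-term geometric summation in your decomposition is unavailable. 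The paper avoids this by summing \emph{first}, with $z,w$ on large circles about $1$ whose radii are ordered according to which half of the $x$-sum is being taken ($|1-z|>|1-w|$ for $L_1$, the reverse for $L_2$) — a repositioning that crosses no poles since those circles still enclose $\gamma_{\tau_1},\gamma_{\tau_2}$ — and only afterwards shrinks to $\gamma_{\rho_k}(1)$ one contour at a time, collecting the $w=\omega$ and $z=\zeta$ residues which yield $A_2$, $A_3$ and, at the last stage, the $\delta_{ij}$. Finally, a smaller slip: comparing with \eqref{A1} and \eqref{B1}, $A_1$ is the configuration with $z$ \emph{outside} $w$ ($\rho_1>\rho_2$) and $B_1$ the one with $z$ \emph{inside} $w$ ($\rho_3<\rho_2$), the opposite of what you wrote; your final attribution ($A_1$ in $C$, $-B_1$ in $D$) is nevertheless the correct one.
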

The proof is based on (\ref{L1}), (\ref{L2}), and Lemma \ref{Lemf01f12}, and suitable contour deformations in order to get the contours into
positions that can be used in the asymptotic analysis, see section \ref{secprooflemmas}.

Combining (\ref{Lijuv}) with Lemma \ref{LemLformulas} we obtain
\begin{equation}\label{Liju}
L(i,j;u)=\delta_{ij}+M_u(i,j),
\end{equation}
where
\begin{equation}\label{Muij}
M_u(i,j)=u^{-1(i\le n)}\left(uC(i,j)+D(i,j)\right),
\end{equation}
and we also set $M_{u}(i,j)=0$ if $i,j\notin\{1,\dots,N\}$.
Thus we have the formula
\begin{equation}\label{paAformula5}
P(a,A)=\frac 1{\pii}\int_{\gamma_r}\frac 1{u-1}\det\big(\delta_{ij}+M_u(i,j)\big)_{1\le i,j\le N}du.
\end{equation}

Next, we want to rewrite the determinant in (\ref{paAformula5}) in a block determinant form, corresponding to $i\le n$ and $i>n$, and similarly for $j$. For $r,s\in\{1,2\}$, and $x,y\in\mathbb{R}$, we define
\begin{equation}\label{Fuv}
F_{u}(r,x;s,y)=M_{u}(n+[x]+1,n+[y]+1),
\end{equation}
where $[\cdot]$ denotes the integer part. The right side of (\ref{Fuv}) does not depend on $r$ or $s$ explicitely but we have $x<0$ for $r=1$ and $x\ge 0$ for $r=2$, and
correspondingly for $y$ depending on $s$.
Let $\Lambda=\{1,2\}\times\mathbb{R}$ and define the measures
\begin{equation*}
d\nu_1(x)=1(x<0)dx,\quad d\nu_2(x)=1(x\ge 0)(x)dx.
\end{equation*}
On $\Lambda$ we define a measure $\rho$ by
\begin{equation}\label{rhomeasure}
\int_{\Lambda}f(\lambda)d\rho(\lambda)=\sum_{r=1}^2\int_{\mathbb{R}}f(r,x)\,d\nu_r(x),
\end{equation}
for every integrable function $f:\Lambda\mapsto\mathbb{R}$.
$F_{u}$ defines an integral operator $F_{u}$ on $L^2(\Lambda,\rho)$ with kernel $F_{u}(r,x;s,y)$. Note that the space  $L^2(\Lambda,\rho)$ is isomorphic to the space 
$X$ defined in (\ref{Xspace}), and we can also think of $F_{u}$ as a matrix operator.

\begin{lemma}\label{LemBlockdet}
We have the identity,
\begin{equation}\label{detidentity}
\det(\delta_{ij}+M_{u}(i,j))_{1\le i,j\le N}=\det(I+F_{u})_{L^2(\Lambda,\rho)}.
\end{equation}
\end{lemma}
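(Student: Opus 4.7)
The plan is to expand the right-hand side of (\ref{detidentity}) using the Fredholm series and to observe that, thanks to the piecewise-constant structure of the kernel $F_u$, the multiple integrals collapse to finite sums that coincide with the principal-minor expansion of the finite determinant on the left. The trace-class property needed to justify the Fredholm expansion is supplied by Lemma \ref{lemtraceclass} (applied to $K(u)$, under the identification $L^2(\Lambda,\rho)\cong X$).

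The main bookkeeping step is to partition $\Lambda$ into unit cells indexed by $\{1,\dots,N\}$. For $i\in\mathbb Z$, set $I_i=[i-n-1,i-n)$. Then $d\nu_1$ restricts to Lebesgue measure on $\bigsqcup_{i\le n} I_i\subset\mathbb R_-$ and $d\nu_2$ restricts to Lebesgue measure on $\bigsqcup_{i>n} I_i\subset\mathbb R_+$. For any $x\in I_i$ one has $n+[x]+1=i$, so by (\ref{Fuv})
\begin{equation*}
F_u(r,x;s,y)=M_u(i,j)\quad\text{for }x\in I_i,\ y\in I_j,
\end{equation*}
and the splitting $r=1\Leftrightarrow i\le n$, $r=2\Leftrightarrow i>n$ is precisely the one built into the definitions (\ref{Fuv}) and (\ref{Muij}). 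Since $M_u(i,j)=0$ when $i$ or $j$ lies outside $\{1,\dots,N\}$, the kernel $F_u$ vanishes off $\bigcup_{i=1}^{N} I_i$ in each variable.

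With the cell structure in hand, I would write the Fredholm series
\begin{equation*}
\det(I+F_u)_{L^2(\Lambda,\rho)}=\sum_{k\ge 0}\frac1{k!}\int_{\Lambda^k}\det\bigl(F_u(\lambda_\alpha,\lambda_\beta)\bigr)_{\alpha,\beta=1}^{k}\,d\rho(\lambda_1)\cdots d\rho(\lambda_k),
\end{equation*}
restrict each integration to $\bigcup_{i=1}^{N} I_i$, and split the resulting integral over products of cells $I_{i_1}\times\cdots\times I_{i_k}$. On each such product the integrand is the constant $\det(M_u(i_\alpha,i_\beta))_{\alpha,\beta}$ and the cell volumes multiply to one, so the integral over $\Lambda^k$ becomes $\sum_{i_1,\dots,i_k\in\{1,\dots,N\}}\det(M_u(i_\alpha,i_\beta))$. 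Antisymmetry of the determinant kills terms with repeated indices, and the $1/k!$ converts the unordered sum to a sum over subsets, giving
\begin{equation*}
\det(I+F_u)_{L^2(\Lambda,\rho)}=\sum_{k=0}^{N}\ \sum_{1\le i_1<\cdots<i_k\le N}\det\bigl(M_u(i_\alpha,i_\beta)\bigr)_{\alpha,\beta=1}^{k},
\end{equation*}
which is exactly the principal-minor expansion of $\det(\delta_{ij}+M_u(i,j))_{1\le i,j\le N}$.

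The only step where I expect to be careful is matching the convention $r\in\{1,2\}$ with the convention $i\le n$ vs.\ $i>n$: one must check that the cells $I_i$ with $i\le n$ are really the pieces of $\mathbb R_-$ of positive $\nu_1$-measure on which $n+[x]+1=i$, and similarly for $i>n$ with $\mathbb R_+$. Beyond this matching, the identity is a routine translation of a Fredholm determinant with a piecewise-constant kernel into a finite matrix determinant, and the finite truncation at $k=N$ makes all exchanges of sums and integrals automatic.
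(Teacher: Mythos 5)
Your proposal is correct and follows essentially the same route as the paper: expand $\det(I+F_u)_{L^2(\Lambda,\rho)}$ in its Fredholm series, use that $F_u(r,x;s,y)=M_u(n+[x]+1,n+[y]+1)$ is constant on unit cells of volume one, collapse the integrals to sums over indices in $\{1,\dots,N\}$ (the kernel vanishing elsewhere), and recognize the finite determinant expansion; your extra remarks on repeated indices and the subset sum just make explicit what the paper leaves implicit. The only small inaccuracy is the appeal to Lemma \ref{lemtraceclass} (which concerns the limiting kernel $K(u)$, not $F_u$); it is also unnecessary, since, as you note, the kernel is supported on finitely many cells so the series terminates at $k=N$ and all manipulations are automatic.
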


This is straightforward, using Fredholm expansions,  and the lemma will be proved in section \ref{secprooflemmas}.

We can now insert the formula (\ref{detidentity}) into (\ref{paAformula5}). This leads to a formula that can be used for taking a limit, but before considering the limit, we have to
introduce the appropriate scalings. For $s=1,2$, we define
\begin{equation}\label{Fuvtilde}
\tilde{F}_{u,T}(r,x;s,y)=
c_0(t_1T)^{1/3}F_{u}(r,c_0(t_1T)^{1/3}x;s,c_0(t_1T)^{1/3}y) 
\end{equation}
where $c_0$ is given by (\ref{scalingconstants}).
The next lemma follows from (\ref{paAformula5}), Lemma \ref{LemBlockdet}, and (\ref{Fuvtilde}), see section \ref{secprooflemmas}.
\begin{lemma}\label{LemRescaled}
We have the formula,
\begin{equation}\label{paAformula6}
P(a,A)=\frac 1{\pii}\int_{\gamma_r}\frac 1{u-1}\det\big(I+\tilde{F}_{u,T}\big)_{L^2(\Lambda,\rho)}du.
\end{equation}
\end{lemma}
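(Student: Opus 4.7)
The proof is essentially a two-line assembly of results already available, so the plan is short. Starting from equation (\ref{paAformula5}), I would immediately apply Lemma \ref{LemBlockdet} to replace the $N\times N$ determinant inside the contour integral by the Fredholm determinant $\det(I+F_u)_{L^2(\Lambda,\rho)}$. This gives
\begin{equation*}
P(a,A)=\frac{1}{\pii}\int_{\gamma_r}\frac{1}{u-1}\det(I+F_u)_{L^2(\Lambda,\rho)}\,du.
\end{equation*}
The only remaining task is to verify that the Fredholm determinant is unchanged when we pass from $F_u$ to the rescaled kernel $\tilde F_{u,T}$ defined in (\ref{Fuvtilde}).

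For this step, I would expand the Fredholm determinant on $L^2(\Lambda,\rho)$ using the definition (\ref{rhomeasure}) of $\rho$. The $k$-th term in the expansion is a sum over $(r_1,\dots,r_k)\in\{1,2\}^k$ of integrals
\begin{equation*}
\frac{1}{k!}\sum_{r_1,\dots,r_k}\int_{\mathbb{R}^k}\det\bigl[F_u(r_i,x_i;r_j,x_j)\bigr]_{1\le i,j\le k}\prod_{\ell=1}^k d\nu_{r_\ell}(x_\ell).
\end{equation*}
In each integral I would change variables $x_\ell=c_0(t_1T)^{1/3}y_\ell$. The Jacobian contributes a factor $c_0(t_1T)^{1/3}$ per integration variable, i.e.\ a total of $[c_0(t_1T)^{1/3}]^k$. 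Distributing one factor of $c_0(t_1T)^{1/3}$ into each of the $k$ rows of the determinant turns the kernel $F_u(r_i,c_0(t_1T)^{1/3}y_i;r_j,c_0(t_1T)^{1/3}y_j)$ into $\tilde F_{u,T}(r_i,y_i;r_j,y_j)$ by (\ref{Fuvtilde}). Since the supports $\nu_1=1(x<0)dx$ and $\nu_2=1(x\ge 0)dx$ are scale-invariant under the positive scaling $c_0(t_1T)^{1/3}$, the measures $d\nu_{r_\ell}$ are preserved. Hence each term in the Fredholm expansion equals the corresponding term for $\tilde F_{u,T}$, giving $\det(I+F_u)_{L^2(\Lambda,\rho)}=\det(I+\tilde F_{u,T})_{L^2(\Lambda,\rho)}$.

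There is no real obstacle here; the only thing to double-check is the bookkeeping ensuring that the scaling factor $c_0(t_1T)^{1/3}$ is positive (so the indicator functions $1(x<0)$ and $1(x\ge 0)$ are unchanged) and that trace-class/absolute convergence of the expansion (guaranteed by Lemma \ref{lemtraceclass} applied after the identification of $L^2(\Lambda,\rho)$ with $X$) allows term-by-term change of variables. Inserting the resulting identity into the contour integral gives (\ref{paAformula6}).
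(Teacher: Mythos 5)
Your proposal is correct and follows essentially the same route as the paper: combine (\ref{paAformula5}) with Lemma \ref{LemBlockdet}, then change variables $x_\ell\to c_0(t_1T)^{1/3}x_\ell$ term by term in the Fredholm expansion, absorbing the Jacobian factors into the rows so that $F_u$ becomes $\tilde F_{u,T}$, using that the measures $\nu_1,\nu_2$ are invariant under positive scalings. One small quibble: the convergence of the expansion should not be justified by Lemma \ref{lemtraceclass}, which concerns the limiting kernel $K(u)$ on $X$; for finite $T$ the kernel $F_u$ vanishes outside a bounded set (since $M_u(i,j)=0$ for $i,j\notin\{1,\dots,N\}$) and the expansion is just the finite determinant of Lemma \ref{LemBlockdet}, so the term-by-term substitution is immediate.
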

Theorem \ref{ThMain} now follows by combining this lemma with the next lemma which will be proved in section \ref{secasymptotics}.
\begin{lemma}\label{LemScalinglimit}
Consider the scaling (\ref{scaling}) and let $K(u)$ be the matrix kernel defined by (\ref{Kuv}). Then,
\begin{equation}\label{deterlimit}
\lim_{T\to\infty}\det\big(I+\tilde{F}_{u,T}\big)_{L^2(\Lambda,\rho)}=
\det\big(I+K(u)\big)_X,
\end{equation}
uniformly for $u$ in a compact set.
\end{lemma}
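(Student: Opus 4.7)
The plan is to reduce the lemma to pointwise saddle-point asymptotics combined with a uniform trace-norm dominant, in two steps: (i) show that each of the building blocks $A_1, B_1, A_2, A_3$ in (\ref{A1})--(\ref{A3}), rescaled as in (\ref{Fuvtilde}), converges to the corresponding piece of $S$ and $T$ from (\ref{Sxy})--(\ref{Txy}); (ii) produce a common trace-class majorant valid for all large $T$, upgrading pointwise convergence of $\tilde F_{u,T}$ to convergence in trace norm uniformly in $u$ on compacts.

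For step (i), I would set $i = n + [x] + 1$ and $j = n + [y] + 1$ with $x,y$ rescaled by $c_0(t_1T)^{1/3}$. Under (\ref{scaling}), all four factors $H_{n,m,a+x}$, $H_{i,m,a}$, $H_{\Delta n,\Delta m,\Delta a}$, $H_{N+1-j,\Delta m,\Delta a}$ appearing in (\ref{A1})--(\ref{A3}) have their double saddle at $1-\sqrt{q}$. Writing $z = 1-\sqrt{q} + z'/(c_0(t_1T)^{1/3})$ and likewise for $w,\zeta,\omega$, a Taylor expansion produces the cubic phase $z'^3/3 - (\xi_1 + \eta_1^2) z'$ on the $(z,\zeta)$ side and $\alpha(w'^3/3 - (\Delta\xi + \Delta\eta^2) w')$ on the $(w,\omega)$ side (the prefactor $\alpha$ from (\ref{alpha}) stemming from the ratio $(t_2/t_1)^{1/3}$), plus linear terms in $x,y$ that are absorbed precisely by the conjugation $c(i)$ of (\ref{conjfactor}); this is the role of $\delta$ in the exponentials of (\ref{S1})--(\ref{S3}). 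Deforming the contours to steepest-descent paths through $1-\sqrt{q}$ and invoking the Airy integral representation, one obtains that $c_0(t_1T)^{1/3} A_k$ converge, up to signs, to the kernels $S_1, S_2, S_3$ and $c_0(t_1T)^{1/3} B_1$ to $T_1$; in the limits of $A_1$ and $B_1$ the extra $s$-integration appearing in (\ref{S1}) and (\ref{T1}) comes from the factor $1/(z-w)$ after a change of variable, and the split of that integration into $\mathbb{R}_+$ (for $S_1$) versus $\mathbb{R}_-$ (for $T_1$) corresponds to the residue at $z = 1$ that distinguishes $\gamma_{\rho_1}(1)$ from $\gamma_{\rho_3}(1)$. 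With the signs and indicators prescribed by (\ref{Cepsilon})--(\ref{Muij}) and (\ref{Fuv}), this gives pointwise convergence of $\tilde F_{u,T}(r,x;s,y)$ to the $(r,s)$-entry of $K(u)(x,y)$ for each fixed $u$.

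The main obstacle is step (ii). On the deformed contours the cubic phase provides Gaussian decay away from the saddle, and the radii (\ref{radii}) combined with (\ref{deltacondition}) ensure both that the poles $z = 1$ and $z = 1-q$ are avoided and that the conjugation $c(i)$ generates a strictly negative exponential rate in the rescaled variables. Careful bookkeeping then yields
\begin{equation*}
|\tilde F_{u,T}(r,x;s,y)| \le C(1 + |u|)\, e^{-(\delta - \eta_1 - \epsilon)|x|}\, e^{-(\delta - \alpha\Delta\eta - \epsilon)|y|}
\end{equation*}
for some small $\epsilon > 0$, uniformly for large $T$ and for $u$ in compacts, the exponents being strictly negative by (\ref{deltacondition}). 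The right-hand side defines a trace-class kernel on $L^2(\Lambda,\rho)$, so dominated convergence upgrades the pointwise limit to $\|\tilde F_{u,T} - K(u)\|_1 \to 0$, whereupon the Lipschitz bound
\begin{equation*}
|\det(I + A) - \det(I + B)| \le \|A - B\|_1\, \exp(1 + \|A\|_1 + \|B\|_1)
\end{equation*}
for trace-class perturbations delivers (\ref{deterlimit}) uniformly for $u$ in compacts. The genuinely hard point is making this majorant uniform when $x,y$ range up to order $(t_1T)^{1/3}$: this requires tracking how the location of the saddle depends on $i,j$ and checking that the steepest-descent estimates on the $H$-factors survive an $x,y$-dependent deformation of the contours while preserving the decay rate dictated by $\delta$.
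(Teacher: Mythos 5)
Your step (i) follows the paper's own route (Lemma \ref{LemA1}): the same double saddle at $w_c=1-\sqrt q$, the conjugation $c(i)$ of (\ref{conjfactor}) absorbing the $\delta$-exponentials, and the rescaled $A_1,B_1,A_2,A_3$ converging to $S_1,T_1,S_2,S_3$ with the indicators of (\ref{Cepsilon})--(\ref{Muij}). One correction of detail: what distinguishes $\gamma_{\rho_1}(1)$ from $\gamma_{\rho_3}(1)$ is the pole at $z=w$ (in the rescaled variables $z'=\alpha w'$), not a residue at $z=1$; the integrand is regular at $z=1$ since $H_{n,m,a}(z)$ vanishes there to high order. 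The $\mathbb{R}_+$ versus $\mathbb{R}_-$ split of the $s$-integral in (\ref{S1}), (\ref{T1}) records whether the limiting $z$-line lies to the left or to the right of $\alpha$ times the $w$-line, and the residue at $z'=\alpha w'$ is exactly what gives $S_4=S_1-T_1$ in (\ref{S4S1T1}). Your step (ii) bound is the analogue of Lemma \ref{LemKernelestimates} (there with $(\cdot)_+^{3/2}$ decay on one side and linear decay on the other, combined into the bound (\ref{Ftildeest})), including the genuinely delicate point you identify, namely $x$-dependent radii $d(v)$ in the contours and $\delta$ chosen large.

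The genuine gap is the final inference: ``pointwise convergence dominated by an exponentially decaying kernel upgrades, by dominated convergence, to $\|\tilde F_{u,T}-K(u)\|_1\to 0$.'' This is not a valid implication. The trace norm is not an integral of the kernel, there is no dominated convergence theorem for $\|\cdot\|_1$, and a pointwise bound of a kernel by the kernel of a trace-class operator does not even guarantee that the dominated operator is trace class, let alone convergence in trace norm. As written, this step fails, and it is precisely the step the paper organizes differently: the paper never proves trace-norm convergence. Instead it expands the determinant in its Fredholm series (\ref{Fredexpt}), bounds the $k$-th term via Hadamard's inequality and the uniform bound (\ref{Ftildeest}) by $k^{k/2}C_0^k\bigl(\int_{\mathbb{R}}e^{-C_1|x|}dx\bigr)^k/k!$, and applies dominated convergence in the integrals and in the sum; this uses only what you establish in steps (i)--(ii). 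An alternative repair closer to your outline is to factor $\tilde F_{u,T}$ and $K(u)$ into products of two Hilbert--Schmidt operators, as in the proof of Lemma \ref{lemtraceclass} via the $s$-integral representations (\ref{STformulas}) (which have prelimit analogues), prove convergence of each factor in Hilbert--Schmidt norm --- where pointwise convergence plus an $L^2$ majorant does suffice --- and then use $\|AB-A'B'\|_1\le\|A-A'\|_2\|B\|_2+\|A'\|_2\|B-B'\|_2$ together with the Lipschitz continuity of $\det(I+\cdot)$ in trace norm that you quote. With either repair your outline becomes a complete proof; without one, the passage from kernel convergence to (\ref{deterlimit}) is unjustified.
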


\section{Proof of Lemmas}\label{secprooflemmas}
In this section we will prove the lemmas that were used in section \ref{secproofmain}. Some results related to the asymptotic analysis will be proved in section \ref{secasymptotics}.
\begin{proof}[Proof of Lemma \ref{LemSumparts}]
Write
\begin{equation*}
W^*_{N,k}(a)=\{\mathbf{x}\in W_N\,;\,x_k=a\}
\end{equation*}
so that
\begin{equation*}
W_{N,k}(a)=\bigcup_{t=-\infty}^a W^*_{N,k}(t)
\end{equation*}
Hence, it is enough to prove the statement with $W_{N,k}(a)$ replaced by $W^*_{N,k}(t)$.
Let $a_i, b_i, c_i, d_i\in\mathbb{Z}$, $1\le i,j\le N$, and let $k<\ell\le N$. Assume that $b_{\ell-1}=b_\ell-1$, and $c_{\ell}=c_{\ell+1}$ if $\ell<N$. Set
\begin{equation*}
b_j'=\begin{cases}b_j &\text{if $j\neq \ell$} \\ b_\ell-1 &\text{if $j=\ell$} \end{cases}, \quad
c_j'=\begin{cases}c_j &\text{if $j\neq \ell$} \\ c_\ell-1 &\text{if $j=\ell$} \end{cases}.
\end{equation*}
Then,
\begin{align}\label{sum1}
&\sum_{\mathbf{x}\in W^*_{N,k}(t)}\det\left(\Delta^{b_j-a_i}f(x_j-y_i)\right)_{1\le i,j\le N}\det\left(\Delta^{d_i-c_j}g(z_i-x_j)\right)_{1\le i,j\le N}\\
=&\sum_{\mathbf{x}\in W^*_{N,k}(t)}\det\left(\Delta^{b'_j-a_i}f(x_j-y_i)\right)_{1\le i,j\le N}\det\left(\Delta^{d_i-c'_j}g(z_i-x_j)\right)_{1\le i,j\le N}.\notag
\end{align}

To prove (\ref{sum1}), we use the summation by parts identity,
\begin{equation}\label{sumparts}
\sum_{y=a}^b\Delta u(y-x)c(z-y)=\sum_{y=a}^b u(y-x)\Delta c(z-y) + u(b+1-x)v(z-b)-u(a-x)v(z+1-a).
\end{equation}
Consider the $x_\ell$-summation in the left side of (\ref{sum1}) with all the other variables fixed. Let $x_{\ell+1}=\infty$ if $\ell=N$ and let $\Delta_x$ denote the finite difference with respect to the variable $x$. Using (\ref{sumparts}) in the second inequality we get
\begin{align}\label{sumpartsdet}
&\sum_{x_\ell=x_{\ell-1}}^{x_{\ell+1}}\det\left(\Delta^{b_j-a_i}f(x_j-y_i)\right)_{1\le i,j\le N}\det\left(\Delta^{d_i-c_j}g(z_i-x_j)\right)_{1\le i,j\le N}\\
=&\sum_{x_\ell=x_{\ell-1}}^{x_{\ell+1}}\Delta_{x_{\ell}}\det\left(\Delta^{b'_j-a_i}f(x_j-y_i)\right)_{1\le i,j\le N}\det\left(\Delta^{d_i-c_j}g(z_i-x_j)\right)_{1\le i,j\le N}\notag\\
=&\sum_{x_\ell=x_{\ell-1}}^{x_{\ell+1}}\det\left(\Delta^{b'_j-a_i}f(x_j-y_i)\right)_{1\le i,j\le N}\det\left(\Delta^{d_i-c'_j}g(z_i-x_j)\right)_{1\le i,j\le N}\notag\\
+&\left.\det\left(\Delta^{b_j'-a_i}f(x_j-y_i)\right)_{1\le i,j\le N}\right|_{x_\ell\to x_{\ell+1}+1}\left.\det\left(\Delta^{d_i-c_j}g(z_i-x_j)\right)_{1\le i,j\le N}\right|_{x_\ell\to x_{\ell+1}}\notag\\
-&\left.\det\left(\Delta^{b_j'-a_i}f(x_j-y_i)\right)_{1\le i,j\le N}\right|_{x_\ell\to x_{\ell-1}}\left.\det\left(\Delta^{d_i-c_j}g(z_i-x_j)\right)_{1\le i,j\le N}\right|_{x_\ell\to x_{\ell-1}-1}.\notag
\end{align}
If $\ell=N$, then the first boundary term in (\ref{sumpartsdet}) is $=0$. This follows since $\Delta^{d_i-c_\ell}g(z_i-\infty)=0$ (assumption that all series are convergents, expressions well-defined), so one column in the second determinant the first boundary term in (\ref{sumpartsdet}) is $=0$. If $\ell<N$, then the first boundary term in (\ref{sumpartsdet}) is $=0$
because $c_{\ell}=c_{\ell+1}$, and $x_\ell\to x_{\ell+1}$ means that columns $\ell$ and $\ell+1$ will be identical in the second determinant. Since $b'_{\ell}=b_\ell-1=b_{\ell-1}$, we see that columns $\ell$ and
$\ell-1$ in the first determinant in the second boundary term in (\ref{sumpartsdet}) will be identical.

Similarly, if $1\le \ell<k$, and $c_{\ell+1}=c_\ell+1$, $b_\ell=b_{\ell-1}$, then
\begin{align}\label{sum2}
&\sum_{\mathbf{x}\in W^*_{N,k}(t)}\det\left(\Delta^{b_j-a_i}f(x_j-y_i)\right)_{1\le i,j\le N}\det\left(\Delta^{d_i-c_j}g(z_i-x_j)\right)_{1\le i,j\le N}\\
=&\sum_{\mathbf{x}\in W^*_{N,k}(t)}\det\left(\Delta^{b''_j-a_i}f(x_j-y_i)\right)_{1\le i,j\le N}\det\left(\Delta^{d_i-c''_j}g(z_i-x_j)\right)_{1\le i,j\le N},\notag
\end{align}
where
\begin{equation*}
b_j''=\begin{cases}b_j &\text{if $j\neq \ell$} \\ b_\ell+1 &\text{if $j=\ell$} \end{cases}, \quad
c_j''=\begin{cases}c_j &\text{if $j\neq \ell$} \\ c_\ell+1 &\text{if $j=\ell$} \end{cases}.
\end{equation*}
The proof of (\ref{sum2}) is analogous to the proof of (\ref{sum1}).

To prove lemma \ref{LemSumparts}, we apply (\ref{sum1}) successively to $x_N,x_{N-1},\dots,x_{k+1}$, and then to $x_N,x_{N-1},\dots,x_{k+2}$ etc.,
and then finally just to $x_N$. Similarly, we apply (\ref{sum2}) to $x_1,x_2,\dots, x_{k-1}$, then to $x_1,x_2,\dots, x_{k-2}$, and finally just to $x_1$.
This proofs part a) of the lemma.

Part b) of the lemma follows from the identity
\begin{equation}\label{sum3}
\sum_{\mathbf{x}\in W_{N,N}(a)}\det\left(\Delta^{i-n}f_j(x_i)\right)_{1\le i,j\le N}=\det\left(\Delta^{i-1-n}f_j(a+1)\right)_{1\le i,j\le N}.
\end{equation}
To prove (\ref{sum3}), first sum over $x_N$ from $x_{N-1}$ to $a$ in the last row. This gives $\Delta^{N-1-n}f_j(a+1)-\Delta^{N-1}f_j(x_{N-1})$. The last term does not contribute
since it is the same as in row $N-1$. We can now sum over $x_{N-1}$ from $x_{N-2}$ to $a$ in row $N-1$ etc. In this way we obtain (\ref{sum3}).
\end{proof}

\begin{proof}[Proof of Lemma \ref{LemCB}]
We see that
\begin{align}\label{QaAformula}
P(a,A)&=\sum_{\mathbf{x}\in W_N\,;\,x_n<0}\det\big(f_{0,1}(i,x_j)\big)_{1\le i,j\le N}\det\big(f_{1,2}(x_i,j)\big)_{1\le i,j\le N}\\
&=\sum_{\mathbf{x}\in W_N}\det\big(f_{0,1}(i,x_j)\big)_{1\le i,j\le N}\det\big(f_{1,2}(x_i,j)\big)_{1\le i,j\le N} 1\left(\sum_{j=1}^N1(x_j<0)\ge n\right)\notag
\end{align}
Now, for any $r>0$,
\begin{equation*}
\frac 1{\pii}\int_{\gamma_r}\frac{u^{\sum_{j=1}^N1(x_j<0)}}{u^{\ell+1}}du=1\left(\sum_{j=1}^N1(x_j<0)=\ell\right).
\end{equation*}
Summing over $\ell\ge n$ and assuming that $r>1$, we get
\begin{equation}\label{contourformula}
\frac 1{\pii}\int_{\gamma_r}\frac{u^{\sum_{j=1}^N1(x_j<0)}}{u^n(u-1)}du=1\left(\sum_{j=1}^N1(x_j<0)\ge n\right).
\end{equation}
Since,
\begin{equation*}
u^{\sum_{j=1}^N1(x_j<0)}=\prod_{j=1}^N\left(u1(x_j<0)+1(x_j\ge 0)\right),
\end{equation*}
it follows from (\ref{QaAformula}), (\ref{contourformula}), and the Cauchy-Binet identity that
\begin{align*}
P(a,A)&=\frac 1{\pii}\int_{\gamma_r}\frac{du}{u^n(u-1)}
\sum_{\mathbf{x}\in W_N}\det\big(f_{0,1}(i,x_j)\big)_{1\le i,j\le N}\det\big(f_{1,2}(x_i,j)\big)_{1\le i,j\le N}\\
&\times \prod_{j=1}^N\left(u1(x_j<0)+1(x_j\ge 0)\right)\notag\\
&=\frac 1{\pii}\int_{\gamma_r}\frac{du}{u^n(u-1)}\det\left(\sum_{z\in\mathbb{Z}}f_{0,1}(i,x)f_{1,2}(x,j)(u1(x<0)+1(x\ge 0))\right)_{1\le i,j\le N}\notag\\
&=\frac 1{\pii}\int_{\gamma_r}\frac{du}{u-1}\det\left(u^{-1(i\le n)}(uL_1(i,j)+L_2(i,j))\right)_{1\le i,j\le N}\notag\\
&=\frac 1{\pii}\int_{\gamma_r}\frac{du}{u-1}\det\left(L(i,j;u)\right)_{1\le i,j\le N}.\notag
\end{align*}

\end{proof}

\begin{proof}[Proof of Lemma \ref{Lemf01f12}]
It follows from (\ref{f01}), (\ref{betaformula}), and (\ref{ABdef}), that
\begin{align*}
f_{0,1}(i,x)&=c(i)\sum_{k=1}^N\beta_{k-i}^1(m,a)(-1)^{n-k}\Delta^{n-k}w_m(a+x)\\&=
-\frac {c(i)}{\pii}\int_{\gamma_r}\left(\sum_{k=1}^N\frac{\beta_{k-i}^1(m,a)}{z^k}\right)H_{n,m,a+x}(z)\frac{dz}{1-z}\\
&=-\frac {c(i)}{(\pii)^2}\int_{\gamma_r}dz\int_{\gamma_\tau}d\zeta\frac{H_{n,m,a+x}(z)(1-\zeta)}{H_{i,m,a}(\zeta)(z-\zeta)(1-z)}.
\end{align*}
Similarly, by (\ref{f12}), (\ref{betaformula}) and (\ref{ABdef}),
\begin{align*}
f_{1,2}(i,x)&=c(j)^{-1}\sum_{k=1}^N(-1)^{k-n}\Delta^{k-n-1}w_{\Delta m}(\Delta a-x)\beta_{j-k}^0(\Delta m,\Delta a)\\&=
\frac {c(j)^{-1}}{\pii}\int_{\gamma_r}\left(\sum_{k=1}^N\frac{\beta_{j-k}^0(\Delta m,\Delta a)}{w^{N+1-k}}\right)H_{\Delta n,\Delta m,\Delta a-x}(w)\frac{dw}{1-w}\\
&=\frac {c(j)^{-1}}{\pii}\int_{\gamma_r}\left(\sum_{k=1}^N\frac{\beta_{j-(N+1-k)}^0(\Delta m,\Delta a)}{w^{k}}\right)H_{\Delta n,\Delta m,\Delta a-x}(w)\frac{dw}{1-w}\\
&=\frac {c(j)^{-1}}{(\pii)^2}\int_{\gamma_r}dw\int_{\gamma_\tau}d\omega\frac{H_{\Delta n,\Delta m,\Delta a-x}(z)}{H_{N+1-j,\Delta m,\Delta a}(\omega)(w-\omega)(1-w)}.
\end{align*}
This proves the lemma.
\end{proof}

\begin{proof}[Proof of Lemma \ref{LemLformulas}]
Recall the condition (\ref{radii}) and choose $r_1, r_2$ so that $r_1>r_2>1+\max(\rho_1,\rho_2)$, which means that  $\gamma_{r_i}(1)$ surrounds $\gamma_{\rho_i}$ and $\gamma_{\tau_i}$, $i=1,2$.
It follows from (\ref{f01formula}) and (\ref{f12formula}), that
\begin{align*}
L_1(i,j)&=-\frac{c(i)c(j)^{-1}}{(\pii)^4}\sum_{x=-\infty}^{-1}\left(\int_{\gamma_{r_1}(1)}dz\int_{\gamma_{\tau_1}}d\zeta\frac{H_{n,m,a+x}(z)(1-\zeta)}{H_{i,m,a}(\zeta)(z-\zeta)(1-z)}\right)
\\
&\times\left(\int_{\gamma_{r_2}(1)}dw\int_{\gamma_{\tau_2}}d\omega\frac{H_{\Delta n,\Delta m,\Delta a-x}(w)}{H_{N+1-\Delta m,\Delta a}(\omega)(w-\omega)(1-w)}\right)\\
&=-\frac{c(i)c(j)^{-1}}{(\pii)^4}\int_{\gamma_{r_1}(1)}dz\int_{\gamma_{\tau_1}}d\zeta\int_{\gamma_{r_2}(1)}dw\int_{\gamma_{\tau_2}}d\omega\left(\sum_{x=-\infty}^{-1}
\left(\frac{1-z}{1-w}\right)^x\right)\\
&\times\frac{H_{n,m,a}(z)H_{\Delta n,\Delta m,\Delta a}(w)(1-\zeta)}{H_{i,m,a}(\zeta)H_{N+1-j,\Delta m,\Delta a}(\omega)(z-\zeta)(w-\omega)
(1-z)(1-w)}.
\end{align*}
Since $r_1>r_2$,
\begin{equation*}
\sum_{x=-\infty}^{-1}\left(\frac{1-z}{1-w}\right)^x=-\frac{1-w}{z-w},
\end{equation*}
and we obtain
\begin{align*}
L_1(i,j)&=\frac{c(i)c(j)^{-1}}{(\pii)^4}\int_{\gamma_{r_1}(1)}dz\int_{\gamma_{\tau_1}}d\zeta\int_{\gamma_{r_2}(1)}dw\int_{\gamma_{\tau_2}}d\omega
\\&\times \frac{H_{n,m,a}(z)H_{\Delta n,\Delta m,\Delta a}(w)(1-\zeta)}{H_{i,m,a}(\zeta)H_{N+1-j,\Delta m,\Delta a}(\omega)(z-\zeta)
(w-\omega)(z-w)(1-z)}.
\end{align*}
We now deform $\gamma_{r_2}(1)$ to $\gamma_{\rho_2}(1)$. Doing so, we cross the pole at $w=\omega$, and hence
\begin{align}\label{L1step}
L_1(i,j)&=\frac{c(i)c(j)^{-1}}{(\pii)^4}\int_{\gamma_{r_1}(1)}dz\int_{\gamma_{\tau_1}}d\zeta\int_{\gamma_{\rho_2}(1)}dw\int_{\gamma_{\tau_2}}d\omega
\\&\times \frac{H_{n,m,a}(z)H_{\Delta n,\Delta m,\Delta a}(w)(1-\zeta)}{H_{i,m,a}(\zeta)H_{N+1-j,\Delta m,\Delta a}(\omega)(z-\zeta)
(w-\omega)(z-w)(1-z)}\notag\\
&+\frac{c(i)c(j)^{-1}}{(\pii)^3}\int_{\gamma_{r_1}(1)}dz\int_{\gamma_{\tau_1}}d\zeta\int_{\gamma_{\tau_2}}d\omega
\frac{H_{n,m,a}(z)(1-\zeta)}{H_{i,m,a}(\zeta)\omega^{n+1-j}
(z-\omega)(z-\zeta)(1-z)}:=I_1+I_2.\notag
\end{align}
In $I_1$ we can shrink $\gamma_{r_1}(1)$ to $\gamma_{\rho_1}(1)$. We then cross the pole at $z=\zeta$ (but not $z=w$ since $\rho_2<\rho_1$). Thus,
by (\ref{A1}),
\begin{align}\label{I1}
I_1&=A_1(i,j)+\frac{c(i)c(j)^{-1}}{(\pii)^3}\int_{\gamma_{\tau_1}}d\zeta\int_{\gamma_{\rho_2}(1)}dw\int_{\gamma_{\tau_2}}d\omega
\frac{\zeta^{n-i}H_{\Delta n,\Delta m,\Delta a}(w)}{H_{N+1-j,\Delta m,\Delta a}(\omega)(w-\omega)(\zeta-w)}\\
:&=A_1(i,j)+I_3.\notag
\end{align}
We note that
\begin{equation}\label{Int1}
\frac 1{\pii}\int_{\gamma_{\tau_1}}\frac{d\zeta}{\zeta^{i-n}(\zeta-w)}=-\frac{1(i>n)}{w^{i-n}},
\end{equation}
since $|w|>|\zeta|$, and hence by (\ref{A2}),
\begin{equation}\label{I3}
I_3=-1(i>n)A_2(i,j).
\end{equation}
Also
\begin{equation}\label{Int2}
\frac 1{\pii}\int_{\gamma_{\tau_2}}\frac{d\omega}{\omega^{n+1-j}(z-\omega)}=\frac{1(j\le n)}{z^{n+1-j}},
\end{equation}
and we obtain
\begin{equation*}
I_2=\frac{1(j\le n)c(i)c(j)^{-1}}{(\pii)^2}\int_{\gamma_{r_1}(1)}dz\int_{\gamma_{\tau_1}}d\zeta\frac{H_{j-1,m.a}(z)(1-\zeta)}{H_{i,m,a}(\zeta)(z-\zeta)(1-z)}.
\end{equation*}
Deform $\gamma_{r_1}(1)$ to $\gamma_{\rho_1}(1)$. We then cross the pole at $z=\zeta$ and we obtain, using (\ref{A3}),
\begin{equation}\label{I2}
I_2=1(j\le n)A_3(i,j)+\frac{1(j\le n)c(i)c(j)^{-1}}{\pii}\int_{\gamma_{\tau_1}}\zeta^{i-j-1}\,d\zeta=1(j\le n)A_3(i,j)+1(i\le n)\delta_{ij}.
\end{equation}
Combining (\ref{L1step}), (\ref{I1}), (\ref{I3}), (\ref{I2}) and (\ref{Cepsilon}), we get (\ref{L1delta}).

Consider next,
\begin{align*}
L_2(i,j)&=-\frac{c(i)c(j)^{-1}}{(\pii)^4}\int_{\gamma_{r_3}(1)}dz\int_{\gamma_{\tau_1}}d\zeta\int_{\gamma_{r_2}(1)}dw\int_{\gamma_{\tau_2}}d\omega\left(\sum_{x=0}^{\infty}
\left(\frac{1-z}{1-w}\right)^x\right)\\
&\times\frac{H_{n,m,a}(z)H_{\Delta n,\Delta m,\Delta a}(w)(1-\zeta)}{H_{i,m,a}(\zeta)H_{N+1-j,\Delta m,\Delta a}(\omega)(z-\zeta)(w-\omega)
(1-z)(1-w)},
\end{align*}
where now $r_2>r_3>1+\max(\rho_1,\rho_2)$. Thus,
\begin{equation*}
\sum_{x=0}^{\infty}\left(\frac{1-z}{1-w}\right)^x=\frac{1-w}{z-w},
\end{equation*}
and consequently
\begin{align*}
L_2(i,j)&=-\frac{c(i)c(j)^{-1}}{(\pii)^4}\int_{\gamma_{r_3}(1)}dz\int_{\gamma_{\tau_1}}d\zeta\int_{\gamma_{r_2}(1)}dw\int_{\gamma_{\tau_2}}d\omega
\\&\times \frac{H_{n,m,a}(z)H_{\Delta n,\Delta m,\Delta a}(w)(1-\zeta)}{H_{i,m,a}(\zeta)H_{N+1-j,\Delta m,\Delta a}(\omega)(z-\zeta)
(w-\omega)(z-w)(1-z)}.
\end{align*}
We now deform $\gamma_{r_3}(1)$ to $\gamma_{\rho_3}(1)$, and doing so we pass the pole at $z=\zeta$, and find
\begin{align*}
L_2(i,j)&=-\frac{c(i)c(j)^{-1}}{(\pii)^4}\int_{\gamma_{\rho_3}(1)}dz\int_{\gamma_{\tau_1}}d\zeta\int_{\gamma_{r_2}(1)}dw\int_{\gamma_{\tau_2}}d\omega
\\&\times \frac{H_{n,m,a}(z)H_{\Delta n,\Delta m,\Delta a}(w)(1-\zeta)}{H_{i,m,a}(\zeta)H_{N+1-j,\Delta m,\Delta a}(\omega)(z-\zeta)
(w-\omega)(z-w)(1-z)}\\
&-\frac{c(i)c(j)^{-1}}{(\pii)^3}\int_{\gamma_{\tau_1}}d\zeta\int_{\gamma_{r_2}(1)}dw\int_{\gamma_{\tau_2}}d\omega
\frac{H_{\Delta n,\Delta m,\Delta a}(w)}{\zeta^{i-n}H_{N+1-j,\Delta m,\Delta a}(\omega)
(w-\omega)(\zeta-w)}:=J_1+J_2.
\end{align*}
In $J_1$ we deform $\gamma_{r_2}(1)$ to $\gamma_{\rho_2}(1)$. Since $\rho_2>\rho_3$, we only cross the pole at $w=\omega$, and we get
\begin{align*}
J_1&=-B_1(i,j)-\frac{c(i)c(j)^{-1}}{(\pii)^3}\int_{\gamma_{\rho_3}(1)}dz\int_{\gamma_{\tau_1}}d\zeta\int_{\gamma_{\tau_2}}d\omega
\frac{H_{n,m,a}(z)(1-\zeta)}{\omega^{n+1-j}H_{i,m,a}(\zeta)(z-\zeta)(z-\omega)(1-z)}\\:&=-B_1(i,j)+J_3.
\end{align*}
Using (\ref{Int1}), we find
\begin{align*}
J_2&=\frac{1(i>n)c(i)c(j)^{-1}}{(\pii)^2}\int_{\gamma_{r_2}(1)}dw\int_{\gamma_{\tau_2}}d\omega\frac{H_{N-i,\Delta m,\Delta a}(w)}{H_{N+1-j,\Delta m,\Delta a}(\omega)(w-\omega)}\\
&=1(i>n)A_2(i,j)+\frac{1(i>n)c(i)c(j)^{-1}}{\pii}\int_{\gamma_{\tau_2}} \omega^{j-i-1}d\omega=1(i>n)(A_2(i,j)+\delta_{ij}),
\end{align*}
which gives (\ref{L2delta}) and the lemma is proved.
\end{proof}

\begin{proof}[Proof of Lemma \ref{LemBlockdet}]
We start with the right side of (\ref{detidentity}),
\begin{align*}
&\det(I+F_{u})_{L^2(\Lambda,\rho)}=\sum_{k=0}^{\infty}\frac 1{k!}\int_{\Lambda^k}d\rho^k(\lambda)\det(F_{u}(\lambda_p,\lambda_q))_{1\le p,q\le k}\\
&=\sum_{k=0}^{\infty}\frac 1{k!}\sum_{r_1,\dots,r_k=1}^2\int_{\mathbb{R}^k}d\nu_{r_1}(x_1)\dots d\nu_{r_k}(x_k)
\det\left(M_{u}(n+[x_p]+1,n+[x_q]+1)\right)_{1\le p,q\le k}\\
&=\sum_{k=0}^{\infty}\frac 1{k!}\sum_{i_1=-n}^{N-n-1}\dots\sum_{i_k=-n}^{N-n-1}\det\left(M_{u,v}(n+i_p+1,n+i_q+1)\right)_{1\le p,q\le k}\\
&=\det(\delta_{ij}+M_{u})(i,j))_{1\le i,j\le N},
\end{align*}
where we recall that $M_{u}(i,j)=0$ if $i,j\notin\{1,\dots,N\}$.
\end{proof}

\begin{proof}[Proof of Lemma \ref{LemRescaled}]

By the formula (\ref{paAformula5}) for $P(a;A)$ and Lemma \ref{LemBlockdet}, we see that
\begin{align}\label{paArescaled}
P(a;A)&=\frac{c_3(t_1T)^{1/3}}{\pii}\int_{\gamma_r}\frac{1}{u-1}\det(I+F_{u})_{L^2(\lambda,\rho)}du\\
&=\frac{1}{\pii}\int_{\gamma_r}\frac{du}{u-1}\det(I+F_{u})_{L^2(\lambda,\rho)}du.\notag
\end{align}
We have the Fredholm expansion,
\begin{equation}\label{Fredexp}
\det\left(I+F_{u}\right)_{L^2(\lambda,\rho)}=
\sum_{k=0}^\infty\frac 1{k!}\sum_{r_1,\dots,r_k=1}^2\int_{\mathbb{R}^k}d\nu_{r_1}(x_1)\dots d\nu_{r_k}(x_k)\det\left(F_{u}(r_p,x_p;r_q,x_q)\right)_{1\le p,q\le k}.
\end{equation}
The change of variables $x_p\to c_0(t_1T)^{1/3}x_p$ gives
$$d\nu_{r_p}(c_0(t_1T)^{1/3}x_p)=c_0(t_1T)^{1/3}d\nu_{r_p}(x_p).$$
Take the factor $c_0(t_1T)^{1/3}$ into row $p$. We see then that the right side of (\ref{Fredexp}) equals,
\begin{equation*}
\sum_{k=0}^\infty\frac 1{k!}\sum_{r_1,\dots,r_k=1}^2\int_{\mathbb{R}^k}d\nu_{r_1}(x_1)\dots d\nu_{r_k}(x_k)\det\left(\tilde{F}_{u}(r_p,x_p;r_q,x_q)\right)_{1\le p,q\le k}
=\det\left(I+\tilde{F}_{u}\right)_{L^2(\lambda,\rho)}.
\end{equation*}
Combining this with (\ref{paArescaled}) we have proved the lemma.
\end{proof}

We want to prove that the operator $K(u)$ in the definition of the two-time distribution is a trace-class operator.

\begin{lemma}\label{lemtraceclass}
The operator $K(u)$ defined by (\ref{Kuv}) is a trace-class operator on the space $X$ given by (\ref{Xspace}).
\end{lemma}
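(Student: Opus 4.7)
\emph{Plan.} The approach is to exhibit $K(u)$ as a finite sum of compositions of Hilbert--Schmidt operators with bounded operators, and then invoke the ideal property of the trace class.

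First, I would reduce the matrix-valued $K(u)$ to a scalar integral operator. Since the constant $2\times 2$ matrix appearing in (\ref{Kuv}) has rank one, $K(u)$ factors as $K(u) = \Psi_u \circ \widetilde R_u \circ \Phi$, where $\Phi\colon X\to L^2(\mathbb R)$ is the isometric concatenation $(f_1,f_2)\mapsto f_1\cdot 1_{\mathbb R_-}+f_2\cdot 1_{\mathbb R_+}$, $\widetilde R_u$ is the integral operator on $L^2(\mathbb R)$ with kernel $R_u$, and $\Psi_u\colon L^2(\mathbb R)\to X$ is $g\mapsto (g|_{\mathbb R_-},\,u\, g|_{\mathbb R_+})$. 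Both $\Phi$ and $\Psi_u$ are bounded, so the task reduces to showing that each of the four summands of $R_u$ obtained from (\ref{Sxy})--(\ref{Ru}), namely $S_1$, $u^{-1}T_1$, $(1-u^{-1})\,1(x>0)S_2(x,y)$, and $-(1-u^{-1})S_3(x,y)1(y<0)$, defines a trace-class operator on $L^2(\mathbb R)$.

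Next, each piece will be written as a product of Hilbert--Schmidt operators using the standard Airy factorization $K_{\Ai}(a,b) = \int_0^\infty \Ai(a+t)\Ai(b+t)\,dt$. For the $S_2$ piece I would set $(\mathcal A_2\phi)(x) = \alpha^{1/2}\,1(x>0)\,e^{-(\delta-\alpha\Delta\eta)x}\int_0^\infty \Ai(\Delta\xi+\Delta\eta^2+\alpha x+t)\,\phi(t)\,dt$ and define $\mathcal B_2$ symmetrically in $y$ with weight $e^{(\delta-\alpha\Delta\eta)y}$, so that $1(x>0)S_2(x,y)$ is the kernel of $\mathcal A_2\mathcal B_2^*$. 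Both Hilbert--Schmidt norms are finite by the super-exponential bound $|\Ai(z)|\le Ce^{-\frac23 z^{3/2}}$ on $z\ge 0$, combined with the strict inequality $\delta>\alpha\Delta\eta$ from (\ref{deltacondition}), which controls $\|\mathcal B_2\|_{\text{HS}}^2$ as $y\to-\infty$. The $S_3$ term is handled symmetrically, using $\delta>\eta_1$. The more elaborate $S_1$ and $T_1$ kernels unfold in the same way: applying the Airy factorization to both $K_{\Ai}$ factors, I would write $S_1 = \mathcal G_1 \mathcal G_2^*$ with $\mathcal G_1\colon L^2(\mathbb R_+\times \mathbb R_+)\to L^2(\mathbb R_x)$ and $\mathcal G_2\colon L^2(\mathbb R_+\times \mathbb R_+)\to L^2(\mathbb R_y)$, each carrying the appropriate Airy factor together with the conjugation weight $e^{(\eta_1-\delta)x}$, respectively $e^{(\delta-\alpha\Delta\eta)y}$, and with the internal weight $e^{(\alpha\Delta\eta-\eta_1)s}$ distributed evenly between them as $e^{(\alpha\Delta\eta-\eta_1)s/2}$. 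The HS norms are again finite by Airy decay plus (\ref{deltacondition}); the $T_1$ analysis is identical but with the $s$-integral over $(-\infty,0]$.

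The main obstacle I anticipate is the bookkeeping for $S_1$ and $T_1$, where the internal $s$-integral carries the weight $e^{(\alpha\Delta\eta-\eta_1)s}$ whose sign is not controlled. For $S_1$, the $s$-integral over $[0,\infty)$ is saved at $s\to+\infty$ only by the Airy factor $\Ai(\Delta\xi+\Delta\eta^2+\alpha s + t_2)$, whose argument tends to $+\infty$ and produces super-exponential decay dominating any exponential growth; for $T_1$, the analogous role at $s\to-\infty$ is played by $\Ai(\xi_1+\eta_1^2-s+t_1)$. Once the Hilbert--Schmidt estimates are in place for all four pieces, the trace-class ideal property gives that $\widetilde R_u$ is trace-class on $L^2(\mathbb R)$, and hence so is $K(u) = \Psi_u\widetilde R_u\Phi$ on $X$.
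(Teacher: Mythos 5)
Your overall strategy is the same as the paper's: reduce the $2\times 2$ matrix structure to a scalar question (the paper does this by noting it suffices to treat $\begin{pmatrix} A & A\\ A & A\end{pmatrix}$ for each piece $A$, you do it by a rank-one factorization through bounded maps -- both are fine), and then show that each of $S_1$, $T_1$, $1(x>0)S_2$, $S_3 1(y<0)$ is trace class by exhibiting it as a product of two Hilbert--Schmidt operators built from the factorization $K_{\Ai}(a,b)=\int_0^\infty \Ai(a+t)\Ai(b+t)\,dt$, Airy decay, and the condition (\ref{deltacondition}). Your treatment of the $S_2$ and $S_3$ pieces coincides with the paper's factors $b_1,b_2$ and $c_1,c_2$ and is correct. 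The gap is in the proposed factorization of $S_1$ and $T_1$. First, the bookkeeping does not parse: if $\mathcal G_1,\mathcal G_2$ both act from $L^2(\mathbb R_+\times\mathbb R_+)$ with a shared intermediate pair $(s,t)$, the composition carries one common $t$ in both Airy factorizations and does not reproduce $S_1$, whose two $K_{\Ai}$ factors have independent internal variables $t_1,t_2$; if instead the intermediate pair is $(t_1,t_2)$, then $s$ is not an intermediate variable at all and cannot be ``distributed'' between the factors as $e^{(\alpha\Delta\eta-\eta_1)s/2}$.

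Second, and more substantively, the even split of the internal weight is precisely the wrong choice. As you yourself observe, for $S_1$ the only decay in $s$ as $s\to+\infty$ comes from $\Ai(\Delta\xi+\Delta\eta^2+\alpha s+t_2)$, which sits on the $y$-side; the $x$-side carries $\Ai(\xi_1+\eta_1^2-s+t_1)$ (equivalently $K_{\Ai}(\xi_1+\eta_1^2-s,\cdot)$), whose argument tends to $-\infty$, where there is no decay. Hence if $\alpha\Delta\eta>\eta_1$ the half-weight $e^{(\alpha\Delta\eta-\eta_1)s/2}$ attached to the $x$-side factor grows exponentially in $s$ with nothing to compensate, and that factor is not Hilbert--Schmidt; symmetrically, for $T_1$ the decay at $s\to-\infty$ lives only on the $x$-side, so the even split fails there when $\alpha\Delta\eta<\eta_1$. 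Thus for generic parameters at least one of your two factorizations has a non--Hilbert--Schmidt factor: ``super-exponential decay beats exponential growth'' is true for the $s$-integral defining the kernel, but the argument requires each factor \emph{separately} to be square integrable. The repair is to split asymmetrically, loading the entire $s$-dependent exponential onto the side whose Airy argument tends to $+\infty$; this is in effect what the paper does, keeping the intermediate variable one-dimensional ($s\in(0,\infty)$ for $S_1$, $s\in(-\infty,0)$ for $T_1$) and writing $S_1$, $T_1$ as compositions of the conjugated kernels $a_1(x,s)=S_3(x,s)e^{-\delta s}$ and $a_2(s,y)=e^{\delta s}S_2(s,y)$, each of which is then shown to be Hilbert--Schmidt on the relevant half-line in $s$.
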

\begin{proof}
Write
\begin{equation*}
S_2^*(x,y)=1(x>0)S_2(x,y),\quad S_3^*(x,y)=S_3(x,y)1(y<0)
\end{equation*}
so that 
\begin{equation*}
S=S_1-S_2^*+S_3^{*},\quad T=-T_1+S_2^{*}-S_3^*.
\end{equation*}
By splitting $K(u)$ into several parts and factoring out multiplicative constants, we see that it is enough to prove that
\begin{equation*}
\begin{pmatrix} A & A \\A & A  \end{pmatrix}
\end{equation*}
is a trace-class operator on $X$ for $A=S_1, T_1, S_2^*,S_3^*$. We can think of $A$ as an operator on $L^2(\Lambda,\rho)$ instead, where
$\Lambda=\{1,2\}\times\mathbb{R}$ and $\rho$ is given by (\ref{rhomeasure}).

Define the kernels
\begin{align}\label{HSfunctions}
&a_1(x,s)=S_3(x,s)e^{-\delta s},\quad a_2(s,y)=e^{\delta s}S_2(s,y),\\
&b_1(x,s)=\alpha 1(x>0)e^{-(\delta-\alpha\Delta\eta)x}\Ai(\Delta\xi+\Delta\eta^2+\alpha x+s),\notag\\
&b_2(x,s)=e^{(\delta-\alpha\Delta\eta)y}\Ai(\Delta\xi+\Delta\eta^2+\alpha y+s),\notag\\
&c_1(x,s)=e^{-(\delta-\eta_1)x}\Ai(\xi_1+\eta_1^2-x+s),\quad
c_2(x,s)=e^{(\delta-\eta_1)y}\Ai(\xi_1+\eta_1^2-y+s)1(y<0).\notag
\end{align}
Using the definitions, we see that
\begin{align}\label{STformulas}
S_1(x,y)&=\int_0^\infty(-a_1(x,s))a_2(s,y)\,ds,\quad T_1(x,y)=\int_{-\infty}^0a_1(x,s)a_2(s,y)\,ds\\
S_2^*(x,y)&=\int_0^\infty b_1(x,s))b_2(s,y)\,ds,\quad S_3^*(x,y)=\int_0^\infty c_1(x,s)c_2(s,y)\,ds,\notag
\end{align}
To get kernels on $L^2(\Lambda,\rho)$, we define
\begin{align*}
a_1(r_1,x;1,s)&=b_1(r_1,x;1,s)=c_1(r_1,x;1,s)=0\\
a_2(1,s;r_3,y)&=b_2(1,s;r_3,y)=c_2(1,s;r_3,y)=0.
\end{align*}
for $r_1=1,2$, and
\begin{equation*}
\tilde{a}_1(r_1,x;2,s)=\tilde{a}_2(2,s;r_3,y)=0
\end{equation*}
for $r_1=1,2$. Furthermore, we define
\begin{align*}
-a_1(r_1,x;2,s)&=\tilde{a}_1(r_1,x;1,s)=a_1(x,s)\\
a_2(2,s;r_3,y)&=\tilde{a}_2(2,s;r_3,y)=a_2(s,y)\\
b_1(r_1,x;2,s)&=b_1(x,s),\quad b_2(2,s;r_3,y)=b_2(s,y)\\
c_1(r_1,x;2,s)&=c_1(x,s),\quad c_2(2,s;r_3,y)=c_2(s,y).
\end{align*}
Then, by (\ref{STformulas}) and (\ref{rhomeasure}),
\begin{equation*}
\int_{\Lambda}a_1(r_1,x;r_2,z)a_2(r_2,z;r_3,y)\,d\rho(r_2,z)=S_1(r_1,x;r_2,y),
\end{equation*}
so $S_1=a_1a_2$. Similarly, we see that $T_1=\tilde{a}_1\tilde{a}_2$, $S_2^*=b_1b_2$ and $S_3^*=c_1c_2$. Using (\ref{deltacondition}) and asymptotic properties of the Airyfunction, we see that $a_1,a_2,b_1,b_2, c_1, c_2$ are square integrable over $\mathbb{R}^2$, and also over $\mathbb{R}$ if we fix one of the variables to be zero.
It follows from this that $a_1, a_2, \tilde{a}_1,\dots, c_2$ are Hilbert-Schmidt operators on $L^2(\Lambda,\rho)$. Since the composition of two Hilbert-Schmidt operators is
a trace-class operator, we have that $S_1, T_1, S_2^*$ and $S_3^{*}$ are trace-class operators on $L^2(\Lambda,\rho)$, and hence $K(u)$ is a trace-class
operator also.
\end{proof}

\section{Asymptotic analysis}\label{secasymptotics}
In this section we will prove Lemma \ref{LemScalinglimit}. The proof has several steps and we will split it into a sequence of lemmas. The proofs of these lemmas will appear later in the section.

For $k=1,2,3$, we define the rescaled kernels
\begin{align}\label{Atilde}
\tilde{A}_{1,T}(x,y)&=c_0(t_1T)^{1/3}A_1(n+[c_0(t_1T)^{1/3}x]+1,n+[c_0(t_1T)^{1/3}y]+1),\\
\tilde{A}_{2,T}(x,y)&=1(x\ge 0)c_0(t_1T)^{1/3}A_2(n+[c_0(t_1T)^{1/3}x]+1,n+[c_0(t_1T)^{1/3}y]+1),\notag\\
\tilde{A}_{3,T}(x,y)&=1(y<0)c_0(t_1T)^{1/3}A_3(n+[c_0(t_1T)^{1/3}x]+1,n+[c_0(t_1T)^{1/3}y]+1),\notag\\
\tilde{B}_{1,T}(x,y)&=c_0(t_1T)^{1/3}B_1(n+[c_0(t_1T)^{1/3}x]+1,n+[c_0(t_1T)^{1/3}y]+1).\notag
\end{align}

\begin{lemma}\label{LemA1}
Uniformly, for $x,y$ in a compact subset of $\mathbb{R}$, we have the limits
\begin{align}\label{Sklimit}
\lim_{T\to\infty}\tilde{A}_{1,T}(x,y)&=S_1(x,y),\\
\lim_{T\to\infty}\tilde{A}_{2,T}(x,y)&=1(x\ge 0)S_2(x,y)\notag\\
\lim_{T\to\infty}\tilde{A}_{3,T}(x,y)&=S_3(x,y)1(y<0),\notag
\end{align}
and
\begin{equation}\label{T1limit}
\lim_{T\to\infty}\tilde{B}_{1,T}(x,y)=T_1(x,y).
\end{equation}
\end{lemma}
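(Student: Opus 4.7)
The plan is to carry out a steepest-descent analysis on each of the multi-contour integrals (\ref{A1})--(\ref{B1}). The workhorse is the observation that under the scaling (\ref{scaling}), $\log H_{n,m,a}(w) = n\log w + (a+m)\log(1-w) - m\log(1 - w/(1-q))$ has a double critical point at $w_* = 1-\sqrt{q}$ to leading order in $T$; the Taylor expansion at $w_*$ produces a cubic phase, and the rescaling $w = w_* + Z/(c_0(tT)^{1/3})$ (with $t=t_1$ for the $(n,m,a)$-factors and $t=\Delta t$ for the $(\Delta n,\Delta m,\Delta a)$-factors) turns $H$ into a standard Airy exponential $\exp(Z^3/3 - \xi Z)$ up to an error vanishing as $T\to\infty$ and a shift linear in $Z$ carrying the parameter $\eta$. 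The conjugation factors $c(i)/c(j)$ from (\ref{conjfactor}), combined with the discrete shifts $i = n + [c_0(t_1T)^{1/3} x] + 1$ and similarly for $j$, contribute exactly the exponential prefactors $e^{(\eta_1-\delta)x}$, $e^{(\delta-\alpha\Delta\eta) y}$, etc.\ appearing in (\ref{S1})--(\ref{S3}); no additional cancellation is needed.

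For $\tilde A_{2,T}$ and $\tilde A_{3,T}$ the saddle-point analysis is standard. Each is a double contour integral at a single time scale ($\Delta t$ or $t_1$), and after deforming the ``1''-contour and the ``0''-contour to cross at $w_*$ along their respective steepest-descent rays, rescaling, and taking $T\to\infty$, one recognizes the classical double-contour representation of the Airy kernel, multiplied by the conjugation exponential and with arguments shifted by $\eta_1,\xi_1$ or $\Delta\eta,\Delta\xi$. The indicators $1(x\ge 0)$ in $\tilde A_{2,T}$ and $1(y<0)$ in $\tilde A_{3,T}$ are already built into (\ref{Atilde}) and survive the limit trivially.

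The main computation is for $\tilde A_{1,T}$ and $\tilde B_{1,T}$, which are fourfold integrals with a $(z,\zeta)$-pair at the $t_1$-scale and a $(w,\omega)$-pair at the $\Delta t$-scale, coupled by the single factor $(z-w)^{-1}$. Under $z = w_* + Z/(c_0(t_1T)^{1/3})$ and $w = w_* + W/(c_0(\Delta t\cdot T)^{1/3})$,
\begin{equation*}
\frac{1}{z-w} = \frac{c_0(t_1T)^{1/3}}{Z - \alpha W + O(T^{-1/3})}.
\end{equation*}
Depending on the sign of $\re(Z-\alpha W)$ enforced by the steepest-descent contours, I would use the identity $1/(Z-\alpha W) = \pm\int e^{\mp s(Z-\alpha W)}\,ds$ over the appropriate half-line; this decouples the $(Z,\zeta)$- and $(W,\omega)$-integrations, and each factor converges, by the single-scale argument used above, to a double-contour Airy kernel with $s$ appearing as a shift in one argument. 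Bookkeeping of the five $(tT)^{1/3}$-factors (the prefactor in (\ref{Atilde}), two integration Jacobians, and the three simple-pole factors $1/(z-\zeta)$, $1/(w-\omega)$, $1/(z-w)$) collapses to the single $\alpha = (t_1/\Delta t)^{1/3}$ in front of (\ref{S1}) and (\ref{T1}). The sole difference between $\tilde A_{1,T}$ and $\tilde B_{1,T}$ lies in whether $\gamma_{\rho_1}(1)$ or $\gamma_{\rho_3}(1)$ is used relative to $\gamma_{\rho_2}(1)$, which dictates the sign of $\re(Z-\alpha W)$ and hence whether $s$ ranges over $(0,\infty)$ (giving $S_1$) or $(-\infty,0)$ (giving $-T_1$).

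The hard part is not any individual saddle-point identification, but three global issues: choosing a simultaneous deformation of the four contours that passes through $w_*$ along steepest-descent rays without crossing the poles at $z=1$, $w=1$, or $z=w$; producing dominating bounds that justify interchanging the limit with the $s$-integral and, later, with the Fredholm expansion in Lemma \ref{LemBlockdet}; and ensuring the errors are uniform for $x,y$ in a compact set. The condition (\ref{deltacondition}) on $\delta$ is precisely what is needed to produce Gaussian decay of the prefactors in $x,y$ along the tails of the rescaled contours, and the remaining tail estimates are standard modifications of those carried out in \cite{JoTt}.
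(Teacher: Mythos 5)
Your proposal follows essentially the same route as the paper: a saddle-point analysis at the double critical point $w_c=1-\sqrt q$, rescaling $z,\zeta$ at the $t_1$-scale and $w,\omega$ at the $\Delta t$-scale so that the $H$-factors become the cubic exponentials $G_{\xi,\eta}$, with the conjugation factor supplying the $e^{\pm\delta}$ prefactors, Gaussian decay along the contours justifying dominated convergence, and the decoupling of the two scales via $1/(z-\alpha w)=\mp\int e^{\pm s(z-\alpha w)}ds$, the contour ordering ($\gamma_{\rho_1}(1)$ versus $\gamma_{\rho_3}(1)$ relative to $\gamma_{\rho_2}(1)$, i.e.\ $D_1<\alpha D_2<D_3$) determining whether $s$ runs over $(0,\infty)$ or $(-\infty,0)$ and hence $S_1$ versus $T_1$. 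The paper implements the "steepest-descent" contours concretely as circles at distance of order $K^{-1/3}$ from $w_c$ (the paths $w_1,w_2$) and proves the needed decay estimates itself (Lemmas \ref{LemHstarlimit}--\ref{LemHstarest1}) rather than importing them, but this is the same argument in substance.
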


The lemma is proved below. In order to prove the convergence of the Fredholm determinant we also need some estimates.

\begin{lemma}\label{LemKernelestimates}
Assume that $|\xi|,|\eta|\le L$ for some fixed $L$. If we choose $\delta$ in (\ref{conjfactor}) sufficiently large, depending on $q$ and $L$, there are positive constants
$C_0, C_1, C_2$ that only depend on $q$ and $L$, so that for all $x,y$ satisfying
\begin{equation}\label{xycondition}
0\le n+[c_0(t_1T)^{1/3}x]<N,\quad 0\le n+[c_0(t_1T)^{1/3}y]<N,
\end{equation}
we have the estimates
\begin{align}\label{xyest}
\left|\tilde{A}_{1,T}(x,y)\right|&\le C_0 e^{-C_1(-x)_+^{3/2}-C_2(x)_+-C_1(y)_+^{3/2}-C_2(-y)_+},\\
\left|\tilde{B}_{1,T}(x,y)\right|&\le C_0 e^{-C_1(-x)_+^{3/2}-C_2(x)_+-C_1(y)_+^{3/2}-C_2(-y)_+},\notag\\
\left|\tilde{A}_{2,T}(x,y)\right|&\le C_0 1(x\ge 0)e^{-C_1(x)_+^{3/2}-C_1(y)_+^{3/2}-C_2(-y)_+},\notag\\
\left|\tilde{A}_{3,T}(x,y)\right|&\le C_0 1(y<0)e^{-C_1(-x)_+^{3/2}-C_2(x)_+ -C_1(-y)_+^{3/2}}.\notag
\end{align}
Here $(x)_+=\max(0,x)$.

\end{lemma}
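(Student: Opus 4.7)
The proof will be a uniform saddle-point/steepest-descent analysis of the four multi-contour integrals (\ref{A1})--(\ref{B1}) after the rescaling (\ref{Atilde}). My plan is to follow the structure that has been used before for asymptotic problems in this model (cf.\ \cite{JoSh}, \cite{JoMar}), but carried out so that the bounds are uniform not only for $x,y$ in compact sets (which is Lemma \ref{LemA1}) but for all $x,y$ satisfying (\ref{xycondition}). First I would substitute $i = n + [c_0(t_1T)^{1/3}x]+1$ and $j = n + [c_0(t_1T)^{1/3}y]+1$ together with the scaling (\ref{scaling}), and group the large exponential factors in the integrand. For each of the variables $z, w$ (on circles around $1$) and $\zeta, \omega$ (on circles around $0$) the integrand contains a factor of the form $e^{T\phi(v) + T^{2/3}\phi_1(v) + T^{1/3}\phi_2(v;x,y)}$ times rational factors, where $\phi$ has a double critical point at $v_c = 1 - \sqrt{q}$ and $\phi_2$ is linear in $x$ or $y$. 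The $x,y$-dependence enters through the factors $(1-z)^x$ in $H_{n,m,a+x}$, through $c(i)/c(j)$ from (\ref{conjfactor}), and through the shifts in the subscripts $i$, $N+1-j$ of the $H$ functions.

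Next I would deform each contour to a steepest-descent contour through $v_c$: the circles around $1$ to arcs crossing $v_c$ at angles $\pm 2\pi/3$, and the circles around $0$ to arcs crossing $v_c$ at angles $\pm\pi/3$. The deformations can be carried out within the domain of analyticity (avoiding $v = 0, 1, 1-q$) while preserving the required ordering of radii in (\ref{radii}). On these contours, $\re[\phi(v)-\phi(v_c)] \le -c|v-v_c|^3$ in a neighbourhood of $v_c$ and is bounded away from $0$ outside. A standard localisation then splits each integration into a piece of size $O(T^{-1/3+\varepsilon})$ near $v_c$ and a global piece. The global piece is super-polynomially small in $T$, and provided $\delta$ in (\ref{conjfactor}) is chosen large enough depending on $q$ and $L$, the conjugation prefactor dominates the $x,y$-growth coming from the $(1-z)^x$-type factors, absorbing it into constants of the stated form. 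The local piece, after the change of variables $v = v_c + \tilde v/(c_0(t_1T)^{1/3})$, produces, uniformly on compacts, precisely the Airy contour integrals defining $S_1, T_1, S_2, S_3$ multiplied by the explicit exponential prefactors in $x,y$.

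The estimates (\ref{xyest}) then come from two sources. The explicit prefactors $e^{(\eta_1-\delta)x + (\delta-\alpha\Delta\eta)y}$ (and their counterparts in the other three kernels), together with the hypothesis (\ref{deltacondition}), give the linear terms $e^{-C_2(x)_+}$ and $e^{-C_2(-y)_+}$. The Airy-type factors give the $e^{-C_1(-x)_+^{3/2}}$ and $e^{-C_1(y)_+^{3/2}}$ decay via the standard bound $|\Ai(\zeta)| \le Ce^{-\frac{2}{3}\zeta^{3/2}}$ for $\zeta \to +\infty$, applied to arguments such as $\xi_1 + \eta_1^2 - x$ (large positive when $x \to -\infty$) and $\Delta\xi + \Delta\eta^2 + \alpha y$ (large positive when $y\to +\infty$).

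The main obstacle is ensuring uniformity when $|x|$ or $|y|$ is of order $(t_1T)^{1/3}$ or larger; there the linear-in-$x,y$ term $\phi_2$ is comparable to the Airy scale, and a single fixed steepest-descent contour will not suffice. To get a genuine $|x|^{3/2}$ decay (rather than only linear) one must allow the contour — equivalently, the effective saddle point — to shift with $x, y$, following the standard strategy used to prove sharp tail bounds for the Airy kernel at large argument. Keeping track of the admissible range of these shifts within the domain of analyticity, and verifying that the global-piece estimate remains valid uniformly in this family of contours, is the technically most delicate part of the argument; the freedom in choosing $\delta$ large in (\ref{conjfactor}) is used precisely to guarantee that the shifted saddle never leaves the region where the contour deformations above are admissible.
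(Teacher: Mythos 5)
Your overall strategy coincides with the paper's: a saddle-point analysis at the double critical point $w_c=1-\sqrt q$, with the $(x)_+$- and $(-y)_+$-linear decay coming from the conjugation factor (\ref{conjfactor}) with $\delta$ large, and the $3/2$-power decay coming from letting the contour (equivalently the effective saddle) move with $x$ and $y$. The problem is that the step you defer as ``the technically most delicate part'' \emph{is} the content of the lemma. The paper's proof consists precisely of the uniform estimates along the shifted contours: circles $w_1(\sigma;d)$, $w_2(\sigma;d)$ of radius displaced by $dK^{-1/3}$, with Gaussian decay $e^{-C_4 d\sigma^2}$ in the arclength variable proved for the whole admissible range $C_1\le d\le C_2K^{1/3}$ (Lemma \ref{LemFirstest}), and the value at $\sigma=0$ controlled by the explicit choice $d(v)=\epsilon\sqrt{-v}$ for $v\le 0$ and $d$ fixed for $v\ge 0$, which produces $e^{-\mu_1(-v)_+^{3/2}+\mu_2(v)_+}$ (Lemma \ref{LemSecest}, packaged as Lemmas \ref{LemHstarest1} and \ref{LemHstarest2}); the lower bounds $|z-\zeta|,|w-\omega|,|z-w|\ge C K^{-1/3}$ for these $v$-dependent contours must also be checked. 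None of this is carried out or even quantified in your proposal, so as written it is a plan rather than a proof of the stated bounds.

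Two specific points in your sketch would not survive being made precise in their current form. First, you derive the $(-x)_+^{3/2}$ and $(y)_+^{3/2}$ decay from the tail bound $|\Ai(\zeta)|\le Ce^{-\frac23\zeta^{3/2}}$ applied to the \emph{limiting} kernels $S_1,T_1,S_2,S_3$; but (\ref{xyest}) is a statement about the prelimit kernels $\tilde A_{k,T}$, $\tilde B_{1,T}$, uniformly over the range (\ref{xycondition}), where $|x|,|y|$ may be as large as order $T^{2/3}$, and there the Airy kernel is not yet present -- the decay has to be extracted from the finite-$T$ integrals themselves, which is exactly what the $v$-dependent radius $d(v)\propto\sqrt{(-v)_+}$ accomplishes (note the constraint $d\le C_2K^{1/3}$ in (\ref{dcondition}) is exactly compatible with this range because $k\ge0$ forces $\sqrt{(-v)_+}\lesssim K^{1/3}$). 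Second, your closing claim that the large $\delta$ in (\ref{conjfactor}) ``guarantees that the shifted saddle never leaves the admissible region'' misstates its role: admissibility of the shift is governed by (\ref{dcondition}) alone, while $\delta$ is needed for a different purpose, namely to convert the residual linear \emph{growth} $e^{\mu_2(v)_+}$ (unavoidable on the side where no $3/2$-decay can be produced by shrinking the contour) into the decay $e^{-C_2(x)_+}$, $e^{-C_2(-y)_+}$ appearing in (\ref{xyest}). So the architecture you propose is the right one, but the lemma's proof requires supplying the quantitative contour estimates, and your two shortcuts would need to be replaced by arguments of the type given in Lemmas \ref{LemFirstest} and \ref{LemSecest}.
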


The proof is given below. We now have the estimates that we need to prove Lemma \ref{LemScalinglimit}

\begin{proof}[Proof of Lemma \ref{LemScalinglimit}]
Recall from (\ref{Ru}) and (\ref{Kuv}) that
\begin{equation*}
K_u(1,x;s,y)=S(x,y)+u^{-1}T(x,y),\quad K_u(2,x;s,y)=uS(x,y)+T(x,y),
\end{equation*}
$s=1,2$. It follows from Lemma \ref{LemA1} that
\begin{equation}\label{Ftildelimit}
\lim_{T\to\infty}\tilde{F}_{u,T}(r,x;s,y)=K_{u}(r,x;s,y),
\end{equation}
for $r,s\in\{1,2\}$, uniformly for $u,x,y$ in compact sets. From (\ref{xyest}) we see that for all $\xi,\eta,u$ in compact sets there are positive constants $C_0,C_1$ so that
\begin{equation}\label{Ftildeest}
\left|\tilde{F}_{u,T}(r,x;s,y)\right|\le C_0e^{-C_1(|x|+|y|)},
\end{equation}
for $r,s\in\{1,2\}$ and all $x,y\in\mathbb{R}$. Note that, by definition $\tilde{F}_{u,T}$ is zero if $x,y$ do not satisfy (\ref{xycondition}). We can expand the Fredholm determinant,
\begin{equation}\label{Fredexpt}
\det(I+\tilde{F}_{u,T})_{L^2(\Lambda,\rho)}=\sum_{k=0}^\infty\frac 1{k!}\int_{\Lambda^k}\det(\tilde{F}_{u,T}(\lambda_i, \lambda_j))_{1\le i,j\le k}d^k\rho(\lambda)
\end{equation}
in its Fredholm expansion. It follows from (\ref{Ftildelimit}), (\ref{Ftildeest}) and Hadamard's inequality that we can take the limit $T\to\infty$ in (\ref{Fredexp}) and get
\begin{equation*}
\sum_{k=0}^\infty\frac 1{k!}\int_{\Lambda^k}\det(K_u(\lambda_i,\lambda_j))_{1\le i,j\le k}d^k\rho(\lambda)=\det(I+K_u)_X.
\end{equation*}
This completes the proof.

\end{proof}

Consider
\begin{equation*}
H_{k,\ell,b}(w)=\frac{w^k(1-w)^{b+\ell}}{\left(1-\frac w{1-q}\right)^\ell}
\end{equation*}
with the scalings ($K\to\infty$, $\eta,\xi,v$ fixed),
\begin{align}\label{klbscaling}
k&=K-c_1\eta K^{2/3}+c_0vK^{1/3},\\
\ell&=K+c_1\eta K^{2/3},\notag\\
b&=c_2K+c_3\xi K^{1/3}.\notag
\end{align}
Here the constants $c_i$ are given by (\ref{scalingconstants}). Write
\begin{equation}\label{fw}
f(w)=\log H_{k,\ell,b}(w)=k\log w+(b+\ell)\log(1-w)-\ell\log(1-\frac{w}{1-q}).
\end{equation}
If $\eta=\xi=v=0$, then $f(w)$ has a double critical point at
\begin{equation}\label{wc}
w_c=1-\sqrt{q}.
\end{equation}
Define
\begin{equation}\label{Hstar}
H^*_{k,\ell,b}(w)=\frac{H_{k,\ell,b}(w)}{H_{k,\ell,b}(w_c)}.
\end{equation}
The local asymptotics around the critical point is given by the next lemma.

\begin{lemma}\label{LemHstarlimit}
Fix $L>0$ and assume that  $|\xi|,|\eta|, |v|\le L$. Furthermore, assume that we have the scaling (\ref{klbscaling}). Then, uniformly for $w'$ in a compact set in $\mathbb{C}$
\begin{equation}\label{Hstarlimit}
\lim_{K\to\infty} H^*_{k,\ell,b}\left(w_c+\frac{c_4}{K^{1/3}}w'\right)=\exp(\frac 13w'^3+\eta w'^2-(\xi-v)w'),
\end{equation}
where
\begin{equation}\label{c4}
c_4=\frac{q^{1/3}(1-\sqrt{q})}{(1+\sqrt{q})^{1/3}}.
\end{equation}

\end{lemma}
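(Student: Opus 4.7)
\begin{Proofx}[Proof sketch of Lemma \ref{LemHstarlimit}]
The plan is a straightforward Taylor expansion of $f(w)=\log H_{k,\ell,b}(w)$ around the double critical point $w_c=1-\sqrt q$, keeping careful track of which coefficients contribute at the cube-root scale $w=w_c+c_4 w'/K^{1/3}$ and which are lower-order corrections. All cancellations follow from elementary algebra involving $c_0,c_1,c_2,c_3,c_4$.

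Split the parameters as $k=K+k_1$, $\ell=K+\ell_1$, $b=c_2K+b_1$, where $k_1=-c_1\eta K^{2/3}+c_0 v K^{1/3}$, $\ell_1=c_1\eta K^{2/3}$, $b_1=c_3\xi K^{1/3}$. Accordingly write
\begin{equation*}
f(w)=K f_0(w)+f_1(w),\qquad f_0(w)=\log w+(1+c_2)\log(1-w)-\log\bigl(1-\tfrac{w}{1-q}\bigr),
\end{equation*}
with $f_1$ containing the perturbations $k_1,\ell_1,b_1$. The first step is to verify by direct computation that $f_0'(w_c)=f_0''(w_c)=0$ (using $1+c_2=(1+\sqrt q)/(1-\sqrt q)$), so that $w_c$ is indeed a double critical point of the ``main'' exponent. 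Then I would compute $f_0'''(w_c)$; a short simplification using $(1+\sqrt q)(1-\sqrt q)^2=(1-q)(1-\sqrt q)$ gives $f_0'''(w_c)=\frac{2(1+\sqrt q)}{q(1-\sqrt q)^3}$. Substituting $w-w_c=c_4 w'/K^{1/3}$ into $\tfrac16 Kf_0'''(w_c)(w-w_c)^3$ produces $\tfrac13 w'^3$ exactly when $c_4^3=q(1-\sqrt q)^3/(1+\sqrt q)$, which is (\ref{c4}); higher-order terms in the expansion of $Kf_0$ are $O(K^{-1/3})$ uniformly for $w'$ in a compact set.

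Next I would expand $f_1$ around $w_c$. The linear contribution is $[k_1h_1'(w_c)+(\ell_1+b_1)h_2'(w_c)-\ell_1 h_3'(w_c)](w-w_c)$ with $h_i$ the three logarithmic factors. The $K^{2/3}$-coefficient (all multiples of $\eta$) cancels thanks to the identity
\begin{equation*}
-\frac{1}{1-\sqrt q}-\frac{1}{\sqrt q}+\frac{1}{\sqrt q(1-\sqrt q)}=0,
\end{equation*}
which is just the vanishing of $f_0'(w_c)$ in disguise. The surviving $K^{1/3}$-coefficient equals $(c_0 v K^{1/3})/(1-\sqrt q)-(c_3\xi K^{1/3})/\sqrt q$, and after multiplying by $c_4 w'/K^{1/3}$ the identities $c_4c_0=(1-\sqrt q)$ and $c_4c_3=\sqrt q$ (both immediate from the definitions in (\ref{scalingconstants}), (\ref{c4})) reduce this to $(v-\xi)w'=-(\xi-v)w'$. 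The quadratic part of $f_1$ at the $K^{2/3}$-order contributes
\begin{equation*}
\tfrac12 c_1\eta K^{2/3}\bigl[-h_1''(w_c)+h_2''(w_c)-h_3''(w_c)\bigr](w-w_c)^2,
\end{equation*}
and the bracket simplifies to $1/(\sqrt q(1-\sqrt q)^2)$; combined with $(c_4 w'/K^{1/3})^2$ and the identity $c_1c_4^2=\sqrt q(1-\sqrt q)^2$ this yields $\eta w'^2$. All remaining contributions from $f_1$, as well as the Taylor remainder of $Kf_0$ beyond cubic order, are uniformly $O(K^{-1/3})$ on compact sets of $w'$.

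Assembling these three limits gives $f(w)-f(w_c)\to \tfrac13 w'^3+\eta w'^2-(\xi-v)w'$ uniformly for $w'$ in a compact set, and exponentiating yields (\ref{Hstarlimit}). The main obstacle is really just bookkeeping: one must ensure that the $K^{2/3}$-terms truly cancel in every order of the Taylor expansion where they should (so that no spurious divergent piece survives), and the double-critical-point structure of $f_0$ together with the rather tight compatibility between $c_0,c_1,c_3$ and $c_4$ is exactly what makes this happen.
\end{Proofx}
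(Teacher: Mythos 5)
Your proposal follows essentially the same route as the paper's proof: split $\log H_{k,\ell,b}$ into the $O(K)$ part (whose double critical point at $w_c$ fixes $c_2$ and whose third derivative fixes $c_4$), the $O(\eta K^{2/3})$ part (whose first derivative also vanishes at $w_c$, which is the cancellation you verify), and the $O(K^{1/3})$ part, then Taylor expand at $w_c+c_4w'/K^{1/3}$ with uniform $O(K^{-1/3})$ remainders on compact sets — exactly the paper's decomposition $f=Kf_1+c_1\eta K^{2/3}f_2+K^{1/3}f_3$. One small arithmetic slip: the second-derivative bracket is $-h_1''(w_c)+h_2''(w_c)-h_3''(w_c)=\tfrac{1}{(1-\sqrt q)^2}-\tfrac1q+\tfrac{1}{q(1-\sqrt q)^2}=\tfrac{2}{\sqrt q(1-\sqrt q)^2}$, not $\tfrac{1}{\sqrt q(1-\sqrt q)^2}$; with this value the prefactor $\tfrac12 c_1c_4^2$ and your (correct) identity $c_1c_4^2=\sqrt q(1-\sqrt q)^2$ give exactly $\eta w'^2$, as claimed.
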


\begin{proof}
Let
\begin{align*}
f_1(w)&=\log w+(c_2+1)\log(1-w)-\log\left(1-\frac w{1-q}\right),\\
f_2(w)&=-\log w+\log(1-w)-\log\left(1-\frac w{1-q}\right),\\
f_3(w)&=c_0x\log w+c_3\xi\log(1-w),
\end{align*}
so that
\begin{equation}\label{Hnmx2}
f(w)=Kf_1(w)+c_1\eta K^{2/3}f_2(w)+K^{1/3}f_3(w).
\end{equation}
Then $f_1'(w)$ has a double zero at $w_c$
only if the constant $c_2=2\sqrt{q}/(1-\sqrt{q})$. A computation gives
\begin{equation*}
f_1^{(3)}(w_c)=\frac{2(1+\sqrt{q})}{q(1-\sqrt{q})^3},
\end{equation*}
and we find
\begin{equation}\label{ftaylor}
K\left(f_1\left(w_c+\frac{c_4}{K^{1/3}}w'\right)-f_1(w_c)\right)=\frac 13 w'^3+O\left(\frac{|w'|^4}{K^{1/3}}\right).
\end{equation}
Also, 
\begin{equation}\label{g1taylor}
c_1\eta K^{2/3}\left(f_2\left(w_c+\frac{c_4}{K^{1/3}}w'\right)-f_2(w_c)\right)=\eta w'^2+O\left(\frac{|w'|^3}{K^{1/3}}\right),
\end{equation}
and
\begin{equation}\label{g2taylor}
K^{2/3}\left(f_3\left(w_c+\frac{c_4}{K^{1/3}}w'\right)-f_3(w_c)\right)=-(\xi-v)w'+O\left(\frac{|w'|^2}{K^{1/3}}\right).
\end{equation}
Using (\ref{ftaylor}), (\ref{g1taylor}) and (\ref{g2taylor}) in (\ref{Hnmx2}), we obtain
\begin{equation}\label{Hstarnmx}
H^*_{k,\ell,b}\left(w_c+\frac{c_4}{K^{1/3}}w'\right)=
\exp\left(\frac 13w'^3+\eta w'^2-(\xi-x)w'+O(|w'|^4/K^{1/3})\right)\notag
\end{equation}
as $K\to\infty$. 

\end{proof}

To prove the estimates that we need, we use some explicit contours in (\ref{A1}) to (\ref{A3}). Let $d>0$ and define
\begin{equation}\label{w1}
w_1(\sigma)=w_1(\sigma;d)=w_c(1-\frac d{K^{1/3}})e^{\mathrm{i}\sigma/K^{1/3}},
\end{equation}
and
\begin{equation}\label{w2}
w_2(\sigma)=w_2(\sigma;d)=1- \sqrt{q}(1-\frac d{K^{1/3}})e^{\mathrm{i}\sigma/K^{1/3}},
\end{equation}
for $|\sigma|\le\pi K^{1/3}$, where $K$ is as in (\ref{klbscaling}). Thus, $w_1$ gives a circle around the origin of radius $w_c(1-\frac d{K^{1/3}})$, and $w_2$ gives a circle of 
radius $ \sqrt{q}(1-\frac d{K^{1/3}})$ around $1$.

\begin{lemma}\label{LemHstarest1}
Fix $L>0$. Assume that we have the scaling (\ref{klbscaling}) and that $|\xi|,|\eta|, |v|\le L$. Then, there are positive constants $C_j$, $1\le j\le 4$ that only depend on $q$ and $L$,
so that if $C_1\le d\le C_2$, then
\begin{equation}\label{Hstarest1}
\left|H^*_{k,\ell,b}(w_1(\sigma;d))\right|^{-1}\le C_3e^{-C_4\sigma^2},
\end{equation}
and
\begin{equation}\label{Hstarest2}
\left|H^*_{k,\ell,b}(w_2(\sigma;d))\right|\le C_3e^{-C_4\sigma^2},
\end{equation}
for $|\sigma|\le \pi K^{1/3}$.

\end{lemma}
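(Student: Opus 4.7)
The plan is to perform a steepest-descent-style analysis by splitting the bound into a local regime near the critical point $w_c=1-\sqrt q$ and a global regime away from it, while tracking the three pieces of
\[
\re\log H^*_{k,\ell,b}(w) = K\re(f_1(w)-f_1(w_c)) + c_1\eta K^{2/3}\re(f_2(w)-f_2(w_c)) + K^{1/3}\re(f_3(w)-f_3(w_c))
\]
separately. The key structural inputs, already visible in the proof of Lemma~\ref{LemHstarlimit}, are that $f_1$ has a \emph{double} critical point at $w_c$, that $f_2$ has a simple critical point there (i.e.\ $f_2'(w_c)=0$, forced by the absence of a $K^{1/3}$-sized term from $f_2$ in that limit), and that only $f_3$ contributes a genuine first-order piece.

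For the local regime $|\sigma|\le\sigma_0 K^{1/3}$ with a small fixed $\sigma_0>0$, set $\epsilon=d/K^{1/3}$, $\phi=\sigma/K^{1/3}$, and expand $w_1(\sigma)-w_c = w_c(-\epsilon+i\phi-\tfrac12\phi^2) + O(\phi^3+\epsilon\phi)$. Taking real parts of the first three powers gives
\begin{align*}
\re(w_1-w_c)^3 &= w_c^3\bigl(3\epsilon\phi^2 - \epsilon^3 + \tfrac32\phi^4\bigr) + \text{h.o.}, \\
\re(w_1-w_c)^2 &= w_c^2(\epsilon^2 - \phi^2) + \text{h.o.}, \\
\re(w_1-w_c)  &= -w_c\bigl(\epsilon + \tfrac12\phi^2\bigr) + \text{h.o.}
\end{align*}
Using $f_1'''(w_c)w_c^3 = 2(1+\sqrt q)/q$ from the proof of Lemma~\ref{LemHstarlimit}, together with a short computation giving $f_2''(w_c)w_c^2 = 2/\sqrt q$, the coefficient of $\sigma^2$ in $\re\log H^*_{k,\ell,b}(w_1(\sigma))$ comes out to
\[
\frac{(1+\sqrt q)\,d}{q} \;-\; q^{-2/3}(1+\sqrt q)^{2/3}\eta,
\]
while the remaining pieces (the $\sigma$-independent contributions of sizes $d$, $d^2$, $d^3$, $\xi$, $v$, and the Taylor remainders of size $O(\sigma^4/K^{1/3})$) are bounded uniformly in $K$ and in $|\sigma|\le\sigma_0 K^{1/3}$ as long as $d$ is bounded. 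Choosing $C_1=C_1(q,L)$ so that the displayed coefficient is at least a fixed positive $C_4$ whenever $d\ge C_1$ and $|\eta|\le L$, and $C_2=C_2(q,L)$ as any fixed upper bound that keeps the remainder uniformly controlled, this produces $\re\log H^*_{k,\ell,b}(w_1(\sigma))\ge C_4\sigma^2-\log C_3$ on the local range, which is (\ref{Hstarest1}) there.

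For the global regime $\sigma_0 K^{1/3}\le|\sigma|\le\pi K^{1/3}$, the crucial observation is that $\re\log w\equiv \log w_c$ on the exact circle $\{|w|=w_c\}$, so $\re f_1$ restricted to that circle is a smooth one-variable function of the angle; an elementary check shows it is uniquely minimized at $w_c$ with no other critical points, giving $\re[f_1(w)-f_1(w_c)]\ge c(\sigma_0)>0$ on the portion at angular distance at least $\sigma_0$ from $w_c$. The perturbation to $w_1(\sigma;d)$ is $O(K^{-1/3})$ and preserves this lower bound up to a factor $1/2$ for large $K$, so $K\re[f_1(w_1(\sigma))-f_1(w_c)]\ge Kc(\sigma_0)/2$, which dwarfs the $O(K^{2/3})$ contributions from $f_2$ and $f_3$ on the circle; since $\sigma^2\le\pi^2 K^{2/3}\ll K$, this comfortably gives (\ref{Hstarest1}) on the global range. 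The estimate (\ref{Hstarest2}) for $w_2(\sigma)$ is handled identically after a sign flip: on $\{|w-1|=\sqrt q\}$ the quantity $\re\log(1-w)$ is constant equal to $\tfrac12\log q$, and the analogous one-variable check now shows $\re f_1$ is uniquely \emph{maximized} at $w_c$, so the Taylor signs reverse and the global bound acquires the opposite sign. The main obstacle I anticipate is precisely this global one-variable verification --- that both circles lie on the correct sides of $\{\re f_1 = f_1(w_c)\}$ and carry no extraneous critical points of $\re f_1$ --- which is a calculus statement about the explicit $q$-dependent function $f_1$; once it is in hand, the explicit dependence of $C_1,\dots,C_4$ on $q$ and $L$ reads off from the local computation above.
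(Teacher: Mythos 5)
Your treatment of the $w_1$-circle bound (\ref{Hstarest1}) is workable in outline, but the argument you give for (\ref{Hstarest2}) has a genuine gap: the "elementary check" you defer to is false. On the exact circle $\{|w-1|=\sqrt q\}$ the function $\re f_1$ is not "uniquely maximized at $w_c$" --- it is identically constant. Indeed, writing $w=1-\sqrt q\,e^{\mathrm{i}\theta}$ one has $|1-w|=\sqrt q$ and $|1-q-w|=\sqrt q\,|e^{\mathrm{i}\theta}-\sqrt q|=\sqrt q\,|w|$, so $\log|w|-\log\bigl|1-\tfrac{w}{1-q}\bigr|$ is constant along the circle, and hence so is $\re f_1$. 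Consequently there is no order-$K$ gain in your global regime for $w_2$: on the shrunken circle of radius $\sqrt q(1-dK^{-1/3})$ the variation of $K\,\re[f_1(w_2(\sigma))-f_1(w_c)]$ is only $O(dK^{2/3})$, which is the \emph{same} order as the $\eta$-contribution $c_1\eta K^{2/3}(f_2-f_2(w_c))$ and as the target $C_4\sigma^2\le C_4\pi^2K^{2/3}$, so nothing gets "dwarfed." The decay in (\ref{Hstarest2}) is a delicate $K^{2/3}$-balance: the favorable term is proportional to $d$ (it comes from the inward shift of the radius, cf.\ the paper's bound $kq(1-\alpha_2)^2-\ell(1-\alpha_1)^2\le-3(1-\sqrt q)dK^{2/3}$ in the proof of (\ref{Hklbest2})), and it must beat the competing term $4(k-\ell)\alpha_1\alpha_2\sin^2(\beta\sigma/2)=O(LK^{2/3})$, which is exactly why $C_1$ must be taken large depending on $L$ --- a constraint invisible in your argument. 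So "handled identically after a sign flip" hides precisely the part of the lemma that needs a real computation, either your global one-variable analysis redone at order $K^{2/3}$ on the shifted circle, or the paper's route: compute $g_r'(\sigma)=\frac{d}{d\sigma}\re\log H_{k,\ell,b}(w_r(\sigma))$ exactly via (\ref{dsigma}), show it has a definite sign of size $\gtrsim dK^{2/3}|\sin(K^{-1/3}\sigma)|$ on the whole range $|\sigma|\le\pi K^{1/3}$ (Lemma \ref{LemFirstest}), integrate, and compare the $\sigma=0$ point with $w_c$ separately (Lemma \ref{LemSecest}); this uniform monotonicity needs no local/global split and handles the degenerate $w_2$ geometry correctly.

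Two smaller points on your local regime. First, the claim that the remainders are "bounded uniformly in $|\sigma|\le\sigma_0K^{1/3}$" is not right as stated: the quartic Taylor remainder is of size $\sigma^4/K^{1/3}\sim\sigma_0^2\,\sigma^2$, the $f_2$-remainder is $O(L|\sigma|^3/K^{1/3})\sim L\sigma_0\sigma^2$, and the $f_3$-term contributes a piece of size $O(L|\sigma|)$; none of these is bounded, though all can be absorbed into $C_4\sigma^2$ by taking $\sigma_0$ small and $C_1$ large (and noting that the explicit quartic term $\tfrac14 f_1'''(w_c)w_c^3\,\sigma^4/K^{1/3}$ coming from your expansion is positive and dominates the fourth-order remainder for small $\sigma_0$). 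This should be said, since the lemma is exactly about uniformity up to $|\sigma|=\pi K^{1/3}$. Second, for the $w_1$ circle your global step is fine: there the degeneracy above does not occur ($\re f_1$ restricted to $\{|w|=w_c\}$ is strictly increasing in $|\theta|$ on $(0,\pi)$, with derivative proportional to $c_2\sin\theta\sin^2(\theta/2)>0$), so the order-$K$ domination you invoke is available for (\ref{Hstarest1}) --- but only there.
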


We will also need estimates that work for large $v$.

\begin{lemma}\label{LemHstarest2}
Assume that $|\xi|,|\eta|\le L$ for some fixed $L>0$, and assume that we have  the scaling (\ref{klbscaling}) and $v$ is such that $k\ge 0$. Then, we can choose $d=d(v)\ge C_0$,
so that
\begin{equation}\label{Hstarest3}
\left|H^*_{k,\ell,b}(w_1(\sigma;d(v)))\right|^{-1}\le C_1e^{-C_2\sigma^2-\mu_1(-v)_+^{3/2}+\mu_2(v)_+},
\end{equation}
for $|\sigma|\le \pi K^{1/3}$, where $C_0,C_1,C_2,\mu_1,\mu_2$ are positive constants that only depend on $q$ and $L$. Similarly, there is a choice of $d=d(v)$ so that
\begin{equation}\label{Hstarest4}
\left|H^*_{k,\ell,b}(w_2(\sigma;d(v)))\right|\le C_1e^{-C_2\sigma^2-\mu_1(-v)_+^{3/2}+\mu_2(v)_+}.
\end{equation}

\end{lemma}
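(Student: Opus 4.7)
The plan is to refine the proof of Lemma~\ref{LemHstarest1} by letting the contour parameter $d$ depend on $v$. Split
\[
f(w) := \log H_{k,\ell,b}(w) = Kf_1(w) + c_1\eta K^{2/3}f_2(w) + c_0 v K^{1/3}\log w + c_3\xi K^{1/3}\log(1-w),
\]
with $f_1, f_2$ as in the proof of Lemma~\ref{LemHstarlimit}, so $f_1'(w_c)=f_1''(w_c)=0$ and $f_2'(w_c)=0$. The only new feature relative to Lemma~\ref{LemHstarest1} is that $c_0 v K^{1/3}\log w$ is no longer a bounded perturbation once $|v|$ grows. On the contour $w_1(\sigma;d)=w_c(1-d/K^{1/3})e^{i\sigma/K^{1/3}}$ this term contributes $c_0 v K^{1/3}\log(1-d/K^{1/3}) = -c_0 vd + O(d^2/K^{1/3})$ to $\text{Re}[f-f(w_c)]$, while the leading cubic from $Kf_1$ gives $-Ad^3+3Ad\sigma^2 + O(d^4/K^{1/3})$ with $A=\tfrac{1}{6}f_1^{(3)}(w_c) w_c^3>0$. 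Maximising $-Ad^3-c_0 vd$ in $d$ for $v<0$ yields $d\sim\sqrt{-v}$ and a peak value of order $(-v)^{3/2}$, which is exactly the gain required.

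I would accordingly set $d(v)=\max\bigl(d_0,\, d_1\sqrt{(-v)_+}\bigr)$, with $d_0$ large enough for the Gaussian-decay mechanism of Lemma~\ref{LemHstarest1} to apply and $d_1=\sqrt{c_0/(3A)}$ chosen to balance the two leading contributions. The constraint $k\ge 0$ forces $|v|=O(K^{1/3})$, hence $d(v)=O(K^{1/6})$, which keeps all Taylor remainders of size $O(d^4/K^{1/3})=o(1)$. Expanding around $w_c$ on $|\sigma|\le \delta K^{1/3}$ then gives
\[
\text{Re}[f(w_1(\sigma;d(v)))-f(w_c)] = -Ad^3+3Ad\sigma^2+O(d^2)-c_0 vd + O(|\sigma|^3/K^{1/3}),
\]
so at $\sigma=0$ one obtains $\log|H^*_{k,\ell,b}(w_1(0;d(v)))|\ge \mu_1(-v)_+^{3/2}-\mu_2 v_+ - C$, and the positive $\sigma^2$-coefficient $3Ad(v)\ge 3Ad_0$ produces the Gaussian factor $e^{-C\sigma^2}$ on this local range.

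The main obstacle I expect is the global range $\delta K^{1/3}\le|\sigma|\le \pi K^{1/3}$, where the Taylor expansion breaks down and where $d(v)$ may be as large as $K^{1/6}$. The plan here is a direct estimate on the explicit form of $f_1$: on the circle $|w|=w_c(1-d/K^{1/3})$, the map $\sigma\mapsto \text{Re}[f_1(w_1(\sigma;d))]$ has a strict maximum at $\sigma=0$ and is bounded above there by $f_1(w_c)-c\delta^2$ for some $c=c(q)>0$; this $K$-scale drop dominates every $K^{2/3}$- and $K^{1/3}$-scale correction coming from $f_2$, $\log w$ and $\log(1-w)$, even after multiplying by $d$ of size $K^{1/6}$. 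For the contour $w_2$ and the bound (\ref{Hstarest4}), the argument is dual: parametrizing $1-w=\sqrt q(1-d/K^{1/3})e^{i\sigma/K^{1/3}}$ and using the same triple-zero structure of $f_1'$ at $w_c$ gives a matching cubic expansion whose sign flips, converting the lower bound on $\log|H^*|$ into the desired upper bound.
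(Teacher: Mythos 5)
Your core mechanism is the right one, and it is the same as the paper's: let $d$ depend on $v$, take $d(v)\asymp\sqrt{(-v)_+}$ so that the cubic term $-Ad^3$ balanced against $-c_0vd$ yields the gain $e^{-\mu_1(-v)_+^{3/2}}$ at $\sigma=0$, and combine this with Gaussian decay in $\sigma$ along the circle (the paper splits exactly this into Lemma \ref{LemFirstest}, the angular bound $e^{-C_4d\sigma^2}$ uniform over $C_1\le d\le C_2K^{1/3}$, and Lemma \ref{LemSecest}, the comparison of $w_r(0;d(v))$ with $w_c$ via $d=\epsilon\sqrt{-v}$, and then multiplies the two bounds). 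However, two of your quantitative justifications are genuinely wrong. First, the constraint $k\ge0$ with $k=K-c_1\eta K^{2/3}+c_0vK^{1/3}$ only forces $(-v)_+\le O(K^{2/3})$, not $|v|=O(K^{1/3})$; hence $d(v)\asymp\sqrt{-v}$ can be as large as a constant times $K^{1/3}$, i.e.\ the upper end of the admissible range (\ref{dcondition}). Even granting your claim $d=O(K^{1/6})$, the remainder $O(d^4/K^{1/3})$ is $O(K^{1/3})$, not $o(1)$, so the truncated Taylor expansion about $w_c$ is not justified on the range you must cover. To repair it you would have to show the quartic remainder is at most a small multiple of $d^3$ for $d\le\epsilon K^{1/3}$ (and absorb the non-small $O(d^2)$ contribution from $\eta K^{2/3}f_2$ as well); the paper sidesteps this by using the explicit two-sided logarithm inequalities (\ref{logineq}), (\ref{logineq2}), valid for $dK^{-1/3}$ of order one, instead of a Taylor expansion, and by taking $\epsilon$ small in $d=\epsilon\sqrt{-v}$.

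Second, your argument for the global range $\delta K^{1/3}\le|\sigma|\le\pi K^{1/3}$ does not work as stated. For (\ref{Hstarest3}) you need $\re\log H$ along the $w_1$-circle to be \emph{minimal} at $\sigma=0$ and to increase with $|\sigma|$, so that $|H^*(w_1(\sigma;d))|^{-1}$ decays; you assert the opposite (a strict maximum at $\sigma=0$, with an upper bound $f_1(w_c)-c\delta^2$ away from it), which is the statement relevant to $w_2$ and (\ref{Hstarest4}), not to $w_1$. Moreover, the claim that the $K$-scale drop of $f_1$ dominates all corrections, including the $\log w$ term, fails precisely in the regime at issue: when $|v|\asymp K^{2/3}$ the term $c_0vK^{1/3}\log|w|$ is itself of order $K$, the same order as $Kf_1$. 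Along $w_1$ it is constant in $\sigma$ (a fact you do not invoke, and which is what actually saves the $w_1$ estimate); along $w_2$ it genuinely varies with $\sigma$, which is why the paper's angular bound (\ref{Hklbest2}) carries the extra hypotheses ($v\le0$ with $k\ge0$, or $|v|\le L$) and is proved by the exact derivative computation (\ref{g2prime}) rather than by treating the $v$-term as a lower-order perturbation. The paper's route — proving monotonicity of $g_r(\sigma)=\re f(w_r(\sigma))$ on all of $|\sigma|\le\pi K^{1/3}$ directly from (\ref{dsigma}), uniformly in $C_1\le d\le C_2K^{1/3}$ — is what closes exactly the two gaps above; with those repairs your argument becomes essentially the paper's proof.
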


These two Lemmas will be proved below. We can use Lemma \ref{LemHstarlimit} and Lemma \ref{LemHstarest1} to prove Lemma \ref{LemA1}.

\begin{proof}[Proof of Lemma \ref{LemA1}]
It follows from (\ref{conjfactor}), (\ref{A1}) and (\ref{Hstar}) that
\begin{align}\label{Atildestar}
\tilde{A}_{1,T}(i,j)&=\frac{c_0(t_1T)^{1/3}e^{-\delta(x-y)}}{(\pii)^4}\int_{\gamma_{\rho_1}(1)}dz\int_{\gamma_{\rho_2}(1)}dw\int_{\gamma_{\tau_1}}d\zeta\int_{\gamma_{\tau_2}}d\omega
\\&\times\frac{H^*_{n,m,a}(z)H^*_{\Delta n,\Delta m,\Delta a}(w)(1-\zeta)(1-\sqrt{q})^{-1}}{H^*_{n+[c_0(t_1T)^{1/3}x]+1,m,a}(\zeta)H^*_{\Delta n-[c_0(t_1T)^{1/3}y],\Delta m,\Delta a}(\omega)(z-\zeta)(w-\omega)(z-w)(1-z)}.\notag
\end{align}

Let $\Gamma_D$ denote the vertical line through $D$ oriented upwards, $\mathbb{R}\ni t\mapsto D+\mathrm{i}t$. Let $D_1>D_2>0$, $d_1,d_2>0$ be such that
\begin{equation*}
C_1\le\frac{c_4}{\sqrt{q}}D_r\le C_2,\quad C_1\le\frac{c_4}{\sqrt{q}}d_r\le C_2,
\end{equation*}
$r=1,2$, where $C_1,C_2$ are the constants in Lemma \ref{LemHstarest1} with some fixed $L$ arbitrarily large. We choose the following parametrizations in (\ref{Atildestar}),
\begin{equation}\label{paramet1}
z(\sigma_1)=w_2\left(\frac{c_4\sigma_1}{\sqrt{q}},\frac{c_4D_1}{\sqrt{q}}\right),\quad \zeta(\sigma_3)=w_1\left(\frac{c_4\sigma_3}{\sqrt{q}},\frac{c_4d_1}{\sqrt{q}}\right),
\end{equation}
where $K=K_1=(t_1T)^{1/3}$ in (\ref{w1}), (\ref{w1}), and
\begin{equation}\label{paramet2}
w(\sigma_2)=w_2\left(\frac{c_4\sigma_2}{\sqrt{q}},\frac{c_4D_2}{\sqrt{q}}\right),\quad \omega(\sigma_4)=w_1\left(\frac{c_4\sigma_4}{\sqrt{q}},\frac{c_4d_2}{\sqrt{q}}\right),
\end{equation}
where $K=K_2=(\Delta tT)^{1/3}$, 
\begin{equation}\label{sigmai}
|\sigma_i|\le\pi K_1^{1/3}, \text{ for $i=1,3$}, \quad |\sigma_i|\le\pi K_2^{1/3}, \text{ for $i=2,4$}.
\end{equation}
Recall the condition (\ref{radii}) on the radii. 
Let
\begin{align}\label{hfunctions}
h_1(\sigma_1)=H^*_{n,m,a}(z(\sigma_1))&,\quad h_2(\sigma_2)=H*_{\Delta n,\Delta m,\Delta a}(w(\sigma_2)),\\
h_3(\sigma_3)=H^*_{n+[c_0(t_1T)^{1/3}x]+1,m,a}(\zeta(\sigma_3))&,\quad h_4(\sigma_4)=H^*_{\Delta n-[c_0(t_1T)^{1/3}y],\Delta m,\Delta a}(\omega(\sigma_4)).
\notag
\end{align}
Now, a computation shows that, for some constant $C$,
\begin{equation}\label{estimate}
\left|\frac{c_0K_1^{1/3}}{(z(\sigma_1)-\zeta(\sigma_3))(w(\sigma_2)-\omega(\sigma_4))(z(\sigma_1)-w(\sigma_2))}\frac{dz}{d\sigma_1}\frac{dw}{d\sigma_2}
\frac{d\zeta}{d\sigma_3}\frac{d\omega}{d\sigma_4}\right|\le C
\end{equation}
for all $\sigma_i$ satisfying (\ref{sigmai}). Thus, for $x,y$ in a compact set, we have the following bound on the integrand in (\ref{Atildestar}),
\begin{align}\label{intbound}
&\left|\frac{c_0K_1^{1/3}h_1(\sigma_1)h_2(\sigma_2)(1-\zeta(\sigma_3))(1-z(\sigma_1))^{-1}}{h_3(\sigma_3)h_4(\sigma_4)
(z(\sigma_1)-\zeta(\sigma_3))(w(\sigma_2)-\omega(\sigma_4))(z(\sigma_1)-w(\sigma_2))}\frac{dz}{d\sigma_1}\frac{dw}{d\sigma_2}
\frac{d\zeta}{d\sigma_3}\frac{d\omega}{d\sigma_4}\right|\\
&\le C\left|\frac{h_1(\sigma_1)h_2(\sigma_2)}{h_3(\sigma_3)h_4(\sigma_4)}\right|\le C_3'e^{-C_4'(\sigma_1^2+\sigma_2^2+\sigma_3^2+\sigma_4^2)},\notag
\end{align}
where the last inequality follows from Lemma \ref{LemHstarest1}.

For $\sigma_i$ in a bounded set, we see that
\begin{align}\label{approxparamet}
z(\sigma_1)&=w_c+\frac{c_4}{K_1^{1/3}}(-\mathrm{i}\sigma_1+D_1)+O(K_1^{-2/3}),\\
w(\sigma_2)&=w_c+\frac{c_4}{K_2^{1/3}}(-\mathrm{i}\sigma_2+D_2)+O(K_2^{-2/3}),\notag\\
\zeta(\sigma_3)&=w_c+\frac{c_4}{K_1^{1/3}}(\mathrm{i}\sigma_3+d_1)+O(K_1^{-2/3}),\notag\\
\omega(\sigma_4)&=w_c+\frac{c_4}{K_2^{1/3}}(\mathrm{i}\sigma_4+d_2)+O(K_2^{-2/3}),\notag
\end{align}
It follows from (\ref{scaling}) that
\begin{align}
n&=K_1-c_1\eta_1K_1^{2/3},\quad \Delta n=K_2-c_1\Delta\eta K_2^{2/3}\\
m&=K_1-c_1\eta_2K_1^{2/3},\quad \Delta m=K_2+c_1\Delta\eta K_2^{2/3}\notag\\
a&=c_2K_1+c_3\xi_1K_1^{1/3}, \quad \Delta a=c_2K_2+c_3\Delta\xi K_2^{1/3},\notag
\end{align}
and hence
\begin{align}
n+c_0x(t_1T)^{1/3}&=K_1-c_1\eta_1 K_1^{2/3}+c_0xK_1^{1/3},\\
\Delta n-c_0y(t_1T)^{1/3}&=K_2-c_1\Delta\eta K_1^{2/3}-c_0\alpha yK_2^{1/3}.\notag
\end{align}
Write $z'=-\mathrm{i}\sigma_1+D_1$, $w'=-\mathrm{i}\sigma_2+D_2$, $\zeta'=\mathrm{i}\sigma_3+d_1$, $\omega'=\mathrm{i}\sigma_4+d_2$. Note that
\begin{align}\label{differentials}
&c_0(t_1T)^{1/3}\frac{dzdwd\zeta d\omega}{(z-\zeta)(w-\omega)(z-w)}=\alpha (1-\sqrt{q})\frac{dz'dw'd\zeta' d\omega'}{(z'-\zeta')(w'-\omega')(z'-\alpha w')},
\\
&c_0(t_1T)^{1/3}\frac{dzd\zeta}{z-\zeta}=(1-\sqrt{q})\frac{dz'd\zeta'}{z'-\zeta'},\quad
c_0(t_1T)^{1/3}\frac{dwd\omega}{w-\omega}=\alpha (1-\sqrt{q})\frac{dw'd\omega'}{w'-\omega'}.\notag
\end{align}

It follows from Lemma \ref{LemHstarlimit}, (\ref{Atildestar}), (\ref{approxparamet}), (\ref{intbound}) and the dominated convergence theorem
that
\begin{align}\label{A1limit}
\lim_{T\to\infty}\tilde{A}_{1,T}(x,y)&=\frac{\alpha e^{\delta(y-x)}}{(\pii)^4}\int_{\Gamma_{D_1}}dz'\int_{\Gamma_{D_2}}dw'
\int_{\Gamma_{-d_1}}d\zeta'\int_{\Gamma_{-d_2}}d\omega'\\
&\times\frac{e^{\frac 13z'^3+\eta_1z'^2-\xi_1z'+\frac 13w'^3+\Delta\eta w'^2-\Delta\xi w'}}{e^{\frac 13\zeta'^3+\eta_1\zeta'^2-(\xi_1-x)\zeta'
+\frac 13\omega'^3+\Delta\eta \omega'^2-(\Delta\xi+\alpha y)\omega'}
(z'-\zeta')(w'-\omega')(z'-\alpha w')},\notag
\end{align}
and we have the condition
\begin{equation}\label{Gammacondition}
d_1,d_2>0,\quad 0<D_1<\alpha D_2<D_3.
\end{equation}

Define
\begin{equation}\label{Gxieta}
G_{\xi,\eta}(z)=e^{\frac 13 z^3+\eta z^2-\xi z},
\end{equation}
and let
\begin{equation}\label{S1intformula}
S_1(x,y)=\frac{\alpha e^{\delta(y-x)}}{(\pii)^4}\int_{\Gamma_{D_1}}dz\int_{\Gamma_{D_2}}dw
\int_{\Gamma_{-d_1}}d\zeta\int_{\Gamma_{-d_2}}d\omega\frac{G_{\xi_1,\eta_1}(z)G_{\Delta\xi,\Delta\eta}(w)}
{G_{\xi_1-x,\eta_1}(\zeta)G_{\Delta\xi+\alpha y,\Delta\eta}(\omega)(z-\zeta)(w-\omega)(z-\alpha w)}.
\end{equation}
If $d,D>0$, we have the formulas,
\begin{align}\label{Gint}
\frac 1{\pii}\int_{\Gamma_D}G_{\xi,\eta}(z)\,dz&=\Ai(\xi+\eta^2)e^{\xi\eta+\frac 23\eta^3},\\
\frac 1{\pii}\int_{\Gamma_{-d}}\frac{d\zeta}{G_{\xi,\eta}(\zeta)}&=\Ai(\xi+\eta^2)e^{-\xi\eta-\frac 23\eta^3},
\notag
\end{align}
with absolutely convergent integrals. Using (\ref{Gammacondition}), we see that
\begin{equation*}
\frac 1{z-\zeta}=\int_0^\infty e^{-s_1(z-\zeta)}ds_1,\quad \frac 1{w-\omega}=\int_0^\infty e^{-s_2(w-\omega)}ds_2,
\quad  \frac 1{z-\alpha w}=-\int_0^\infty e^{s_3(z-\alpha w)}ds_3.
\end{equation*}
It follows from these formulas, (\ref{S1intformula}) and (\ref{Gint}) that $S_1$ is also given by (\ref{S1}).

The proof of (\ref{T1limit}) is identical with $D_1$ replaced by $D_3$ satisfying 
(\ref{Gammacondition}). The integral formula for $T_1$ reads
\begin{equation}\label{T1intformula}
T_1(x,y)=\frac{\alpha e^{\delta(y-x)}}{(\pii)^4}\int_{\Gamma_{D_3}}dz\int_{\Gamma_{D_2}}dw
\int_{\Gamma_{-d_1}}d\zeta\int_{\Gamma_{-d_2}}d\omega\frac{G_{\xi_1,\eta_1}(z)G_{\Delta\xi,\Delta\eta}(w)}
{G_{\xi_1-x,\eta_1}(\zeta)G_{\Delta\xi+\alpha y,\Delta\eta}(\omega)(z-\zeta)(w-\omega)(z-\alpha w)}.
\end{equation}
The other cases are treated similarly. For $S_2$ and $S_3$ we get the formulas
\begin{equation}\label{S2intformula}
S_2(x,y)=\frac{\alpha e^{\delta(y-x)}}{(\pii)^2}\int_{\Gamma_{D_2}}dw\int_{\Gamma_{-d_2}}d\omega\frac{G_{\Delta\xi+\alpha x,\Delta\eta}(w)}
{G_{\Delta\xi+\alpha y,\Delta\eta}(\omega)(w-\omega)},
\end{equation}
and
\begin{equation}\label{S3intformula}
S_3(x,y)=\frac{e^{\delta(y-x)}}{(\pii)^2}\int_{\Gamma_{D_1}}dz\int_{\Gamma_{-d_1}}d\zeta\frac{G_{\xi_1-y,\eta_1}(\zeta)}{G_{\xi_1-x,\eta_1}(\zeta)(z-\zeta)}.
\end{equation}

This proves Lemma \ref{LemA1}.

\end{proof}

\begin{proof}[Proof of Lemma \ref{LemKernelestimates}]
Consider first $\tilde{A}_{1,T}$. By Lemma (\ref{LemHstarest1}), we can choose $d_1$ and $d_2$, with $d_1<\alpha d_2$, so that
\begin{align}\label{Est1}
|H^*_{n,m,a}(w_2(\sigma_1,d_1))|&\le C_3e^{-C_4\sigma_1^2}, \quad |\sigma_1|\le\pi K_1^{1/3},\\
|H^*_{\Delta n,\Delta m,\Delta a}(w_2(\sigma_2,d_1))|&\le C_3e^{-C_4\sigma_1^2}, \quad |\sigma_2|\le\pi K_2^{1/3},\notag
\end{align}
where $C_3,C_4$ are some positive constants independent of $\sigma_1$ and $\sigma_2$. By Lemma \ref{LemHstarest2}, we can choose $d=d_3(x)\ge C_0$,
and $d=d_4(y)\ge C_0$, so that
\begin{align}\label{Est2}
|H^*_{n+[c_0x(t_1T)^{1/3}]+1,m,a}(w_1(\sigma_3,d_3(x)))|^{-1}&\le C_1e^{-C_2\sigma_3^2-\mu_1(-x)_+^{3/2}+\mu_2(x)_+},\\
|H^*_{\Delta n-[c_0y(t_1T)^{1/3}],\Delta m,\Delta a}(w_1(\sigma_4,d_4(y)))|^{-1}&\le C_1e^{-C_2\sigma_3^2-\mu_1(y)_+^{3/2}+\mu_2(-y)_+},\notag
\end{align}
It is not difficult to check that if $z=w_2(\sigma_1,d_1)$, $w=w_2(\sigma_2,d_2)$, $\zeta=w_1(\sigma_3,d_3(x))$ and $\omega=w_1(\sigma_4,d_4(y))$, then there is a
constant $C_5$ so that
\begin{equation*}
|z-\zeta|\ge C_5K_1^{-1/3}, \quad |w-\omega|\ge C_5K_2^{-1/3},
\end{equation*}
and
\begin{equation*}
|z-w|\ge \sqrt{q}|d_1-\alpha d_2|K_1^{-1/3}\ge C_5K_1^{-1/3}.
\end{equation*}
Introducing these parametrizations into (\ref{Atildestar}) and using the estimates above, we find
\begin{align}
|\tilde{A}_{1,T}(x,y)|&\le Ce^{-\delta(x-y)-\mu_1(-x)_+^{3/2}+\mu_2(x)_+-\mu_1(y)_+^{3/2}+\mu_2(-y)_+}\int_{\mathbb{R}^4}
e^{-C_4(\sigma_1^2+\sigma_2^2+\sigma_3^2+\sigma_4^2)}d^4\sigma\\
&\le Ce^{-\delta(x-y)-\mu_1(-x)_+^{3/2}+\mu_2(x)_+-\mu_1(y)_+^{3/2}+\mu_2(-y)_+}.\notag
\end{align}
We see that for large enough $|x|$, we can choose $\delta$ so large that
\begin{equation*}
-\mu_1(-x)_+^{3/2}+\mu_2(x)_+-\delta x\le -C_1(-x)_+^{3/2}-C_2(x)_+
\end{equation*}
for some positive constants $C_1,C_2$. This proves the estimate for $\tilde{A}_{1,T}$. The proof for $\tilde{B}_{1,T}$ is completely analogous.

Consider now $\tilde{A}_{3,T}$,
\begin{equation}\label{A3tildestar}
\tilde{A}_{3,T}(x,y)=\frac{c_0(t_1T)^{1/3}e^{-\delta(x-y)}1(y<0)}{(2\pii)^2}\int_{\gamma_{\rho_1}(1)}dz\int_{\gamma_{\tau_1}}d\zeta
\frac{H^*_{n+[c_0y(t_1T)^{1/3}],m,a}(z)(1-\zeta)}{H^*_{n+[c_0x(t_1T)^{1/3}]+1,m,a}(z)(1-z)(z-\zeta)}.
\end{equation}
Using Lemma \ref{LemHstarest2}, we see that, just as for $\tilde{A}_{1,T}$, we can choose $d_1(y)$ and $d_2(x)$ so that
\begin{align*}
|H^*_{n+[c_0y(t_1T)^{1/3}],m,a}(w_2(\sigma_1,d_1(y)))|&\le C_1e^{-C_2\sigma_1^2-\mu_1(-y)_+^{3/2}+\mu_2(y)_+},\\
|H^*_{n+[c_0x(t_1T)^{1/3}],m,a}(w_1(\sigma_2,d_2(x)))^{-1}|&\le C_1e^{-C_2\sigma_1^2-\mu_1(-x)_+^{3/2}+\mu_2(x)_+},
\end{align*}
and we get the estimate
\begin{equation*}
|\tilde{A}_{3,T}(x,y)|\le Ce^{-\mu_1(-x)_+^{3/2}+\mu_2(x)_+-\delta x-\mu_1(-y)_+^{3/2}+\delta y}1(y<0).
\end{equation*}
This gives us the estimate we want by choosing $\delta$ large enough. The proof for $\tilde{A}_{2,T}$ is analogous.

\end{proof}

The statements in Lemma \ref{LemHstarest1} and in Lemma \ref{LemHstarest2} are consequences of two other lemmas that we will now state and prove.
The first lemma is concerned with the decay along the paths given by $w_1(\sigma)$ and $w_2(\sigma)$.

\begin{lemma}\label{LemFirstest}
Assume that we have the scaling (\ref{klbscaling}) and let $|\xi|,|\eta|\le L$ for some fixed $L>0$.
There are positive constants $C_1,C_2,C_3,C_4$ that only depend on $q$ and $L$, so that if
\begin{equation}\label{dcondition}
C_1\le d\le C_2K^{1/3}
\end{equation}
then for $|\sigma|\le\pi K^{1/3}$,
\begin{equation}\label{Hklbest1}
\left|\frac{H_{k,\ell,b}(w_1(\sigma;d))}{H_{k,\ell,b}(w_1(0;d))}\right|^{-1}\le C_3 e^{-C_4d\sigma^2},
\end{equation}
for all $v\in\mathbb{R}$. Furthermore, for $|\sigma|\le\pi K^{1/3}$,
\begin{equation}\label{Hklbest2}
\left|\frac{H_{k,\ell,b}(w_2(\sigma;d))}{H_{k,\ell,b}(w_1(0;d))}\right|\le C_3 e^{-C_4d\sigma^2},
\end{equation}
for all $v\le 0$ such that $k\ge 0$, and all $v$ such that $|v|\le L$.
\end{lemma}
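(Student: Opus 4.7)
The plan is to analyze $\operatorname{Re}f(w)$ with $f=\log H_{k,\ell,b}$ along the two circular contours by parametrising $\theta=\sigma/K^{1/3}$ and $u=4\sin^2(\theta/2)$. Because $u\ge 4\sigma^2/(\pi^2K^{2/3})$ for $|\sigma|\le\pi K^{1/3}$, any bound of the shape $\Delta g\ge C\,dK^{2/3}u$ will yield the desired $\Delta g\ge C_4d\sigma^2$. Along $w_1(\sigma;d)=r_1 e^{i\theta}$ with $r_1=w_c(1-d/K^{1/3})$, the modulus $|w_1|$ is independent of $\sigma$, so the term $k\log|w_1|$ drops out. This is the structural reason why the first bound holds for every $v\in\mathbb{R}$: $v$ enters $f$ only through $k$, hence only through a $\sigma$-independent additive constant. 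Writing $r_2=r_1/(1-q)$ and $A_i=r_i/(1-r_i)^2$, the $\sigma$-dependent part of $\operatorname{Re}f$ reduces to
\[
\Delta g_1(u)=\tfrac{b+\ell}{2}\log(1+A_1 u)-\tfrac{\ell}{2}\log(1+A_2 u),
\]
and the target becomes $\Delta g_1(u)\ge C\,dK^{2/3}u$ uniformly on $u\in[0,4]$.

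The key computation is the derivative at the origin, $2\Delta g_1'(0)=(b+\ell)A_1-\ell A_2$. Substituting the explicit expressions for $r_i,1-r_i$ at $\delta=d/K^{1/3}$ and using $c_2+1=(1+\sqrt q)/(1-\sqrt q)$ to cancel the leading-in-$K$ terms---a reflection of $f'(w_c)=f''(w_c)=0$---yields $\Delta g_1'(0)=\tfrac{1+\sqrt q}{q}\,dK^{2/3}+O(K^{2/3})$, where the $O(K^{2/3})$ correction comes from the $c_1\eta K^{2/3}$ piece of the scaling \ref{klbscaling} and is absorbed by choosing $C_1=C_1(q,L)$ large enough. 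To extend from $u$ near $0$ to all of $[0,4]$ I would split at some small constant $u_0$: on $[0,u_0]$ the variation of $\Delta g_1'$ is controlled by $\Delta g_1''$, itself $O(K)$ with bounded coefficients, which reduces things to the value at zero up to a factor $\tfrac12$ provided $u_0$ is chosen compatibly with $d$; on $[u_0,4]$ a direct estimate gives $\Delta g_1(u)\ge c(u_0,q)K$, which dominates the target $C\,dK^{2/3}u\le 4CC_2K$ once the upper bound $C_2$ on $d/K^{1/3}$ is taken small enough.

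For $w_2(\sigma;d)$ the analysis is parallel, except that $|w_2(\sigma;d)|$ now depends on $\sigma$, so $k\log|w_2(\sigma;d)|$ contributes. Under the hypothesis $v\le 0$ with $k\ge 0$, this extra term is non-increasing in $|\sigma|$ (since $|w_2|$ is minimised at $\sigma=0$) and therefore only strengthens the inequality; under $|v|\le L$, the coefficient $k$ differs from $K$ by at most $O(K^{2/3})$ and the resulting perturbation is absorbed into the same $C_1$ as in the $w_1$ case. The main obstacle is maintaining the correct linear-in-$d$ prefactor uniformly for $d\in[C_1,C_2K^{1/3}]$: for small $\delta$ this is transparent from the cubic Taylor expansion at $w_c$ (cf.~Lemma~\ref{LemHstarlimit}), but for $\delta$ of order one the cubic rescaling is no longer available and the linear-in-$d$ behaviour must be extracted directly from the rational functions $A_1,A_2$. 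Keeping track of the cancellations is precisely what forces the lower bound $d\ge C_1(q,L)$ and the upper bound $d\le C_2K^{1/3}$ with $C_2$ small.
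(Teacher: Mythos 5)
Your treatment of the $w_1$ contour is essentially the paper's computation, transplanted to the variable $u=4\sin^2(\sigma/2K^{1/3})$: the reduction to $\tfrac{b+\ell}{2}\log(1+A_1u)-\tfrac{\ell}{2}\log(1+A_2u)$ is correct, the observation that $k$ (hence $v$) only enters through a $\sigma$-independent constant is exactly why (\ref{Hklbest1}) holds for all $v$, and the cancellation giving $(b+\ell)A_1-\ell A_2\approx\frac{2(1+\sqrt q)}{q}\,dK^{2/3}$ with $O(LK^{2/3})$ errors absorbed by $d\ge C_1(q,L)$ is the same as the paper's bound on the numerator in (\ref{g1prime}). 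However, your two-regime bridge to all of $u\in[0,4]$ does not hold together as stated: controlling the variation of the $u$-derivative by a second derivative of size $O(K)$ forces $u_0\lesssim dK^{-1/3}$, while the estimate ``$\ge c(u_0,q)K$ dominates $4CC_2K$'' on $[u_0,4]$ treats $u_0$ as a constant; for $d=O(1)$ these requirements are incompatible. The splitting is in fact unnecessary: since $b\ge0$, the derivative equals $\big((b+\ell)A_1-\ell A_2+u\,bA_1A_2\big)/\big(2(1+A_1u)(1+A_2u)\big)\ge c\,dK^{2/3}$ uniformly on $[0,4]$, and integrating gives (\ref{Hklbest1}); this is precisely the sign analysis the paper performs on (\ref{g1prime}) before integrating in $\sigma$.

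The genuine gap is the $w_2$ case. Along $w_2(\sigma;d)$ one has $1-w_2(\sigma)=\sqrt q(1-dK^{-1/3})e^{\mathrm i\sigma K^{-1/3}}$, so the $(b+\ell)\log|1-w|$ term is \emph{constant} in $\sigma$; all of the decay must come from $-\ell\log|1-w_2(\sigma)/(1-q)|$, against which $k\log|w_2(\sigma)|$ competes. Your sign is backwards: since $|w_2(\sigma)|$ is minimized at $\sigma=0$ and $k\ge0$, the term $k\log|w_2(\sigma)|$ is non-\emph{decreasing} in $|\sigma|$, so it pushes $\re f(w_2(\sigma))-\re f(w_2(0))$ upwards and works \emph{against} the upper bound (\ref{Hklbest2}); it does not strengthen it. Nor is it a perturbation that can be ``absorbed into $C_1$'': with $\alpha_1=\sqrt q(1-dK^{-1/3})$, $\alpha_2=\alpha_1/q$ one has $\alpha_1/(1-\alpha_1)^2=\alpha_2/(1-\alpha_2)^2$ to leading order and $k\approx\ell\approx K$, so the $k$- and $\ell$-terms cancel at order $K$ and the surviving decay is only of size $dK^{2/3}\sigma^2$. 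Extracting it requires redoing for the pair $(k,\ell)$ the same cancellation bookkeeping you did for $(b+\ell,\ell)$ on $w_1$ — this is the paper's identity $kq(1-\alpha_2)^2-\ell(1-\alpha_1)^2=-3(1-\sqrt q)dK^{2/3}-\Delta K^{2/3}$ together with the bound (\ref{alphaest3}) on the cross term $4(k-\ell)\alpha_1\alpha_2\sin^2(\beta\sigma/2)$ — and it is exactly here that the hypotheses $v\le0$ (with $k\ge0$) or $|v|\le L$ are needed, to keep $\Delta\ge0$ in (\ref{Delta}) and the cross term of size $O(LK^{2/3})$. As written, your argument establishes nothing for (\ref{Hklbest2}), so this half of the lemma is unproved.
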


\begin{proof}
Recall the definition of $f(w)$ in (\ref{fw}) and the parametrizations (\ref{w1}) and (\ref{w2}). Define
\begin{equation}\label{gr}
g_r(\sigma)=\re f(w_r(\sigma))=
k\log|w_r(\sigma)|+(b+\ell)\log|1-w_r(\sigma)|-\ell\log\left|1-\frac{w_r(\sigma)}{1-q}\right|,
\end{equation}
$r=1,2$, $|\sigma|\le\pi K^{1/3}$. Note that for any real numbers $\alpha,\beta$,
\begin{equation}\label{dsigma}
\frac{d}{d\sigma}\log|1-\alpha e^{\mathrm{i}\beta\sigma}|=\frac{\alpha\beta\sin{\beta\sigma}}{(1-\alpha)^2+4\alpha\sin^2(\beta\sigma/2)}.
\end{equation}
Let $\beta=K^{-1/3}$, $\alpha_1=w_c(1-dK^{-1/3})$, $\alpha_2=\alpha_1/(1-q)$. Then a computation using (\ref{gr}) and (\ref{dsigma}) gives
\begin{equation}\label{g1prime}
g_1'(\sigma)=\frac{(b+\ell)\alpha_1(1-\alpha_2)^2-\ell\alpha_2(1-\alpha_1)^2+4b\alpha_1\alpha_2\sin^2\frac{\beta\sigma}{2}}
{((1-\alpha_1)^2+4\alpha_1\sin^2\frac{\beta\sigma}{2})((1-\alpha_2)^2+4\alpha_2\sin^2\frac{\beta\sigma}{2})}\beta\sin\beta\sigma.
\end{equation}
By symmetry it is enough to consider $0\le \sigma\le\pi K^{1/3}$. We have to compute
\begin{equation}\label{alphaexpr}
(b+\ell)\alpha_1(1-\alpha_2)^2-\ell\alpha_2(1-\alpha_1)^2=\frac{\alpha_1}{1-q}[(1-q)(b+\ell)(1-\alpha_2)^2-\ell(1-\alpha_1)^2].
\end{equation}
Now,
\begin{equation*}
1-\alpha_1=\sqrt{q}+(1-\sqrt{q})d\beta,\quad 1-\alpha_2=\frac 1{1+\sqrt{q}}(\sqrt{q}+d\beta),
\end{equation*}
and using (\ref{klbscaling}) a computation gives
\begin{align*}
&(1-q)(b+\ell)(1-\alpha_2)^2-\ell(1-\alpha_1)^2\\&=\left(2qd-\frac{2c_1q^{3/2}}{1+\sqrt{q}}\eta\right)K^{2/3}+\left(\sqrt{q}d^2-\frac{2c_1q(1-\sqrt{q})}{1+\sqrt{q}}\eta d+
\frac{c_3(1-\sqrt{q})q}{1+\sqrt{q}}\xi\right)K^{1/3}\\
&-\frac{c_1\sqrt{q}(1-\sqrt{q})}{1+\sqrt{q}}\eta d^2+\frac{2c_3\sqrt{q}(1-\sqrt{q})}{1+\sqrt{q}}\xi d+\frac{c_3(1-\sqrt{q})}{1+\sqrt{q}}\xi d^2K^{-1/3}.
\end{align*}
Since $|\xi|,|\eta|\le L$, we see that
\begin{equation}\label{alphaest}
(1-q)(b+\ell)(1-\alpha_2)^2-\ell(1-\alpha_1)^2\ge qdK^{2/3}+\Delta_1K^{2/3}+\Delta_2K^{1/3},
\end{equation}
where
\begin{align*}
\Delta_1&=qd-\frac{2c_1q^{3/2}}{1+\sqrt{q}}L,\\
\Delta_2&=\sqrt{q}d^2-\frac{2c_1q(1-\sqrt{q})}{1+\sqrt{q}}Ld-\frac{c_3(1-\sqrt{q})q}{1+\sqrt{q}}L-\frac{c_1\sqrt{q}(1-\sqrt{q})}{1+\sqrt{q}}Ld^2K^{-1/3}\\
&-\frac{2c_3\sqrt{q}(1-\sqrt{q})}{1+\sqrt{q}}LdK^{-1/3}-\frac{c_3(1-\sqrt{q})}{1+\sqrt{q}}L d^2K^{-2/3}.
\end{align*}
We note that we can choose $C_1$ and $C_2$, depending only on $q$ and $L$, so that if $C_1\le d\le C_2K^{1/3}$, then
$\Delta_1\ge 0$ and $\Delta_2\ge 0$, and also
\begin{equation*}
\frac{\alpha_1}{1-q}\ge\frac{w_c}{2(1-q)}=\frac 1{2(1+\sqrt{q})}.
\end{equation*}
Thus, we see from (\ref{alphaexpr}) and (\ref{alphaest}) that
\begin{equation*}
(b+\ell)\alpha_1(1-\alpha_2)^2-\ell\alpha_2(1-\alpha_1)^2\ge\frac{q}{2(1+\sqrt{q})}dK^{2/3}
\end{equation*}
provided that $C_1\le d\le C_2K^{1/3}$. Consequently, by (\ref{g1prime}),
\begin{equation}\label{g1primeestt}
g_1'(\sigma)\ge\frac{qdK^{2/3}\sin K^{-2/3}\sigma}{2(1+\sqrt{q})(1+\alpha_1)^2(1+\alpha_2)^2}\ge \frac{q}{8(1+\sqrt{q})}dK^{2/3}\sin K^{-2/3}\sigma
\end{equation}
since
\begin{equation*}
(1+\alpha_1)^2(1+\alpha_2)^2\le 4.
\end{equation*}
It follows, by integration, that, for $0\le\sigma\le\pi K^{1/3}$,
\begin{equation*}
g_1(\sigma)-g_1(0)\ge \frac{q}{4(1+\sqrt{q})}dK^{4/3}\sin^2\left(\frac{\sigma}{2K^{2/3}}\right)\ge\frac{q}{4(1+\sqrt{q})}dK^{4/3}\left(\frac{2\sigma}{2\pi K^{2/3}}\right)^2
=\frac{q}{4\pi^2(1+\sqrt{q})}d\sigma^2,
\end{equation*}
since by convexity $\sin t\ge 2t/\pi$ for $0\le t\le \pi/2$.
This proves the estimate (\ref{Hklbest1}).

Next, we turn to the proof of (\ref{Hklbest2}) which is similar. In this case we get
\begin{equation*}
g_2'(\sigma)=\frac{d}{d\sigma}\left(k\log|1-\sqrt{q}(1-d\beta)e^{\mathrm{i}\beta\sigma}|-\ell\log|1-\frac 1{\sqrt{q}}(1-d\beta)e^{\mathrm{i}\beta\sigma}|\right),
\end{equation*}
where $\beta=K^{-1/3}$. Let $\alpha_1=\sqrt{q}(1-d\beta)$, $\alpha_2=\frac 1q \alpha$. Then, using (\ref{dsigma}), we obtain
\begin{equation}\label{g2prime}
g_2'(\sigma)=\frac{k\alpha_1(1-\alpha_2)^2-\ell\alpha_2(1-\alpha_1)^2+4(k-\ell)\alpha_1\alpha_2\sin^2\frac{\beta\sigma}{2}}
{((1-\alpha_1)^2+4\alpha_1\sin^2\frac{\beta\sigma}{2})((1-\alpha_2)^2+4\alpha_2\sin^2\frac{\beta\sigma}{2})}\beta\sin\beta\sigma.
\end{equation}
Now,
\begin{equation}\label{alphaexpr2}
k\alpha_1(1-\alpha_2)^2-\ell\alpha_2(1-\alpha_1)^2=\frac{\alpha_1}q\left[kq(1-\alpha_2)^2-\ell(1-\alpha_1)^2\right],
\end{equation}
and a computation gives
\begin{equation*}
kq(1-\alpha_2)^2-\ell(1-\alpha_1)^2=-3(1-\sqrt{q})dK^{2/3}-\Delta K^{2/3},
\end{equation*}
where
\begin{align}\label{Delta}
\Delta&=(1-\sqrt{q})d+2c_1(1-\sqrt{q})^2\eta-(1-q)dK^{-1/3}-c_0(1-\sqrt{q})^2vK^{-1/3}\\
&+2c_1(1+q)\eta dK^{-2/3}+2c_0(1-\sqrt{q})vdK^{-2/3}-c_0vd^2K^{-1}.\notag
\end{align}
If $|\xi|,|\eta|,|v|\le L$, we see that we can choose $C_1, C_2$, depending only on $q,L$, so that if $C_1\le d\le C_2K^{1/3}$, the $\Delta\ge 0$, and we obtain
\begin{equation}\label{alphaest2}
kq(1-\alpha_2)^2-\ell(1-\alpha_1)^2\le-3(1-\sqrt{q})dK^{2/3}.
\end{equation}
If $|\xi|,|\eta|\le L$ and $v\le 0$, we can also choose $C_1, C_2$ so that $\Delta\ge 0$ if $C_1\le d\le C_2K^{1/3}$. Also, we see that
\begin{align}\label{alphaest3}
4(k-\ell)\alpha_1\alpha_2\sin^2\frac{\beta\sigma}{2}&=\left(-2c_1\eta K^{2/3}+c_0vK^{1/3}\right)\alpha_1\alpha_2\sin^2\frac{\sigma}{2K^{1/3}}\\
&\le 8(c_0+c_1)L\alpha_1\alpha_2K^{2/3}\le 8(c_0+c_1)LK^{2/3}.\notag
\end{align}
if $v\le 0$ or $|v|\le L$. Assume that $C_2$ is such that $\alpha_1\ge\sqrt{q}/2$. Then (\ref{alphaexpr2}), (\ref{alphaest2}) and (\ref{alphaest3}) give
\begin{align*}
&k\alpha_1(1-\alpha_2)^2-\ell\alpha_2(1-\alpha_1)^2+4(k-\ell)\alpha_1\alpha_2\sin^2\frac{\beta\sigma}{2}\\&\le
-\frac 1{\sqrt{q}}(1-\sqrt{q})dK^{2/3}+\left(-\frac{1-\sqrt{q}}{2\sqrt{q}}d+8(c_0+c_1)L\right)K^{2/3}\\
&\le -\frac 1{\sqrt{q}}(1-\sqrt{q})dK^{2/3},
\end{align*}
if we choose $C_1$ so that
\begin{equation*}
-\frac{1-\sqrt{q}}{2\sqrt{q}}d+8(c_0+c_1)L\le 0
\end{equation*}
for $d\ge C_1$. Since $\alpha_1\le\sqrt{q}$, $\alpha_1\le 1/\sqrt{q}$,
\begin{equation*}
\frac 1{(1+\alpha_1)^2(1+\alpha_2)^2}\ge \frac 1{(2+\sqrt{q}+1/\sqrt{q})^2},
\end{equation*}
and (\ref{g2prime}) gives
\begin{equation*}
g_2'(\sigma)ß\le -\frac{1-\sqrt{q}}{\sqrt{q}(2+\sqrt{q}+1/\sqrt{q})^2}dK^{2/3}.
\end{equation*}
We can now proceed, as for $g_1$, to prove that
\begin{equation*}
g_2(\sigma)-g_2(0)\le -\frac{1-\sqrt{q}}{\pi^2\sqrt{q}(2+\sqrt{q}+1/\sqrt{q})^2}d\sigma^2.
\end{equation*}
This completes the proof of the Lemma.

\end{proof}

The next Lemma is concerned with the decay for large $|v|$.

\begin{lemma}\label{LemSecest}
Assume that we have the scaling (\ref{klbscaling}) and that $v$ is such that $k\ge 0$, which will always be the case. Also, assume that $|\xi|,|\eta|\le L$ for some $L>0$. There are positive
constants $\mu_1,\mu_2,\mu_3$ that only depend on $q,L$, and a choice $d=d(v)$ satisfying (\ref{dcondition}) so that
\begin{equation}\label{Hklbest3}
\left|\frac{H_{k,\ell,b}(w_c)}{H_{k,\ell,b}(w_1(0;d(v)))}\right|\le\mu_3e^{-\mu_1(-v)_+^{3/2}+\mu_2(v)_+}.
\end{equation}
There is also a choice $d=d(v)$ satisfying (\ref{dcondition}) so that
\begin{equation}\label{Hklbest4}
\left|\frac{H_{k,\ell,b}(w_2(0;d(v)))}{H_{k,\ell,b}(w_c)}\right|\le\mu_3e^{-\mu_1(-v)_+^{3/2}+\mu_2(v)_+}.
\end{equation}
If we assume that $|v|\le L$, we can choose $d$ independent of $v$ in some interval so that (\ref{Hklbest3}) and (\ref{Hklbest4}) hold.
\end{lemma}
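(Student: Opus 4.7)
The key observation is that both $w_1(0;d)$ and $w_2(0;d)$ are real, so
\begin{equation*}
\log\bigl|H^*_{k,\ell,b}(w_r(0;d))\bigr| = f(w_r(0;d)) - f(w_c), \qquad r = 1, 2,
\end{equation*}
with $f$ as in (\ref{fw}). Under the parametrization $w_r(0;d) = w_c + c_4 w'_r/K^{1/3}$ one has $w'_1 = -(1-\sqrt{q})d/c_4 < 0$ and $w'_2 = \sqrt{q}\,d/c_4 > 0$. Keeping careful track of the Taylor computations behind Lemma~\ref{LemHstarlimit}, I would first establish, for $|w'| \le \varepsilon_0 K^{1/3}$ with $\varepsilon_0$ depending only on $q$, the identity
\begin{equation*}
f(w_c + c_4 w'/K^{1/3}) - f(w_c) = \phi(w') + \mathcal{E}(w',K), \quad \phi(w') := \tfrac{1}{3}w'^3 + \eta w'^2 - (\xi - v)w',
\end{equation*}
with the quantitative error bound $|\mathcal{E}(w',K)| \le (C/K^{1/3})\bigl(|w'|^4 + |\eta|\,|w'|^3 + (|\xi|+|v|)\,|w'|^2\bigr)$.

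The cubic $\phi$ has critical points $w'_* = -\eta \pm \sqrt{\eta^2+\xi-v}$ which, for $|\eta|,|\xi| \le L$ and $v \to -\infty$, behave like $\pm\sqrt{-v}$ with critical values $\mp \tfrac{2}{3}(-v)^{3/2}$. I would then choose $d(v)$ so that $w'_r$ lands on the relevant critical branch: for (\ref{Hklbest3}), take $d(v) = c_4(\sqrt{(-v)_+}+C_0)/(1-\sqrt{q})$ so that $w'_1 \approx -\sqrt{-v}$, giving $\phi(w'_1) \approx \tfrac{2}{3}(-v)^{3/2}$ and hence $|H^*(w_1(0;d(v)))|^{-1} \le C e^{-\tfrac{2}{3}(-v)^{3/2}}$; for (\ref{Hklbest4}), take $d(v) = c_4(\sqrt{(-v)_+}+C_0)/\sqrt{q}$ so that $w'_2 \approx \sqrt{-v}$ with $\phi(w'_2) \approx -\tfrac{2}{3}(-v)^{3/2}$. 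For $v$ large positive no real critical points exist, but any fixed $d(v) \equiv d_0$ compatible with (\ref{dcondition}) gives $\phi(w') = O(1) + O(v)$ linearly in $v$, yielding the required $e^{\mu_2(v)_+}$ factor. In all regimes one checks that $d(v)$ lies in the window (\ref{dcondition}), which is possible because $v \ge -K^{2/3}/c_0 + O(K^{1/3})$ follows from $k \ge 0$.

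The main obstacle is controlling $\mathcal{E}$ uniformly when $|v|$ is allowed to be unbounded: the troublesome contribution is $|v|\,|w'|^2/K^{1/3}$, which at the saddle $|w'|^2 \sim -v$ becomes $|v|^2/K^{1/3}$, of the same order as the leading $(-v)^{3/2}$ once $|v| \sim K^{2/3}$ (the boundary $k \to 0$). To handle this I would abandon the approximating cubic in the large-$v$ regime and work directly with the explicit real-axis formula
\begin{equation*}
g_1(u) := \log\bigl|H^*_{k,\ell,b}(w_1(0;d))\bigr| = k\log(1-u)+(b+\ell)\log\bigl(1+\tfrac{1-\sqrt{q}}{\sqrt{q}}u\bigr)-\ell\log(1+u/\sqrt{q}),
\end{equation*}
with $u = d/K^{1/3}$ (and the analogous formula for $g_2$). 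The stationarity equation $g_1'(u) = 0$ is quadratic in $u$ after clearing denominators, so the real critical point $u_*(v)$ is available in closed form. Because $g_1$ is a linear combination with $O(K)$ coefficients of strictly concave and strictly convex logarithmic summands, one can estimate $g_1(u_*)$ from below via the signs of the second derivatives, obtaining positive constants $\mu_1$ (smaller than the ideal $2/3$) and $\mu_2$ depending only on $q$ and $L$. The same reasoning yields (\ref{Hklbest4}); and for bounded $|v| \le L$ the clean Taylor argument of the first two paragraphs directly produces the required uniform $d$.
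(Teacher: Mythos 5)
Your overall strategy coincides with the paper's: the proof there also works directly with the exact real-axis expression (\ref{g10}) for $g_1(0)-f(w_c)$ (and its analogue $g_2(0)-f(w_c)$), substitutes the scaling (\ref{klbscaling}), and then treats three regimes exactly as you do -- $d\propto\sqrt{-v}$ for $v\le 0$ large, a fixed admissible $d$ for $v\ge 0$ (giving only the linear loss $\mu_2(v)_+$), and a fixed $d$ for $|v|\le L$ -- with admissibility of $d$ in (\ref{dcondition}) checked through $\sqrt{-v}\le D_2K^{1/3}$, which follows from $k\ge 0$. Your preliminary cubic-saddle heuristic also correctly identifies the rate $(-v)^{3/2}$ and the correct order of magnitude of $d(v)$, and you are right that the Taylor approximation of Lemma \ref{LemHstarlimit} cannot be used uniformly up to $|v|\sim K^{2/3}$.

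The gap is in the decisive quantitative step for large negative $v$. The paper does not solve the stationarity equation: it bounds the logarithms in (\ref{g10}) by the elementary two-sided inequalities (\ref{logineq})--(\ref{logineq2}), which turns $g_r(0)-f(w_c)$ into an explicit cubic in $d$ with controlled coefficients, and then takes $d=\epsilon\sqrt{-v}$ with $\epsilon$ \emph{small}. The smallness is not cosmetic: the term you flagged, of size $|v|\,d^2K^{-1/3}$, is genuinely of the same order as $(-v)^{3/2}$ when $|v|\sim K^{2/3}$, and in the $g_2$ (upper-bound) case (\ref{Hklbest4}) it enters with the unfavourable sign for $v\le 0$; with $d=\epsilon\sqrt{-v}$ it carries a factor $\epsilon^2$ against the factor $\epsilon$ of the main term, so it is beaten by choosing $\epsilon$ small, using precisely the window (\ref{dcondition}). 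Your proposed choices $d(v)=c_4(\sqrt{(-v)_+}+C_0)/(1-\sqrt{q})$ and $c_4(\sqrt{(-v)_+}+C_0)/\sqrt{q}$, i.e.\ sitting at the saddle of the limiting cubic, have no such safety factor, and your fallback -- solving the quadratic $g_1'(u)=0$ in closed form and ``estimating $g_1(u_*)$ from below via the signs of the second derivatives'' -- is a plan rather than an argument: it does not explain how the $(-v)^{3/2}$ rate with constants depending only on $q,L$ emerges, nor how the competing $|v|u^2$ contribution is absorbed in the $g_2$ case, where the relevant extremum need not even help you since you want an upper bound at a point of your choosing. The repair is exactly the paper's: keep your exact formulas for $g_1,g_2$, expand them with (\ref{logineq})--(\ref{logineq2}), take $d=\epsilon\sqrt{-v}$ with $\epsilon$ small but admissible, and accept a suboptimal $\mu_1$; the regimes $v\ge 0$ and $|v|\le L$ you have already set up correctly.
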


\begin{proof}
Using (\ref{gr}) we see that
\begin{equation*}
\left|\frac{H_{k,\ell,b}(w_1(0;d(v)))}{H_{k,\ell,b}(w_c)}\right|=e^{g_1(0)-\log f(w_c)},
\end{equation*}
so we want to estimate $g_1(0)-\log f(w_c)$ from below, and then make a good choice of $d$. We see that
\begin{equation}\label{g10}
g_1(0)-\log f(w_c)=k\log(1-dK^{-1/3})+(b+\ell)\log\left(1+\frac{1-\sqrt{q}}{\sqrt{q}}dK^{-1/3}\right)-\ell\log\left(1+\frac 1{\sqrt{q}}dK^{-1/3}\right).
\end{equation}
To estimate this expression, we will use the inequalities
\begin{equation}\label{logineq}
-x-\frac{x^2}2-\frac{2x^3}{3}\le\log(1-x)\le -x-\frac{x^2}2-\frac{x^3}3,
\end{equation}
for $1/2\le x\le 1$, and
\begin{equation}\label{logineq2}
x-\frac{x^2}2\le\log(1+x)\le x-\frac{x^2}2+\frac{x^3}3,
\end{equation}
for $x\ge 0$.
It follows from (\ref{g10}) and these inequalities that
\begin{align*}
g_1(0)-\log f(w_c)&\ge k\left(-dK^{-1/3}-\frac 12d^2K^{-2/3}-\frac 23d^3K^{-1}\right)\\&+(b+\ell)\left(\frac{1-\sqrt{q}}{\sqrt{q}}dK^{-1/3}-\frac 12\left(\frac{1-\sqrt{q}}{\sqrt{q}}\right)^2d^2K^{-2/3}
\right)\\
&+\ell\left(-\frac 1{\sqrt{q}}dK^{-1/3}+\frac 1{2q}d^2K^{-2/3}-\frac 1{3q^{3/2}}d^3K^{-1}\right)
\end{align*}
Substitute the expressions in (\ref{klbscaling}). After some manipulation this gives
\begin{align}\label{g10est}
g_1(0)-\log f(w_c)&\ge\left(-c_0v+\frac{1-\sqrt{q}}{\sqrt{q}}c_3\xi\right)d+
\left(\frac 1{\sqrt{q}}c_1\eta-\frac12c_0vK^{-1/3}-\frac{(1-\sqrt{q})^2}{2q}c_3\xi K^{-1/3}\right)d^2\\
&+\left(-\frac 23-\frac 1{3q^{3/2}}+\left(\frac 23-\frac 1{3q^{3/2}}\right)c_1\eta K^{-1/3}-\frac 23c_0vK^{-2/3}\right)d^3\notag\\
&\ge\left(-c_0v-\frac{1-\sqrt{q}}{\sqrt{q}}c_3L\right)d+\left(-\frac 1{\sqrt{q}}c_1L-\frac12c_0vK^{-1/3}-\frac{(1-\sqrt{q})^2}{2q}c_3LK^{-1/3}\right)d^2\notag\\
&+\left(-\frac 23-\frac 1{3q^{3/2}}-\left|\frac 23-\frac 1{3q^{3/2}}\right|c_1LK^{-1/3}-\frac 23c_0vK^{-2/3}\right)d^3.\notag
\end{align}
If $|v|\le L$, we see that if we choose $d$ so that $C_1'\le d\le C_2'$, then
\begin{equation*}
g_1(0)-\log f(w_c)\ge -C_3'.
\end{equation*}
Here $C_1',C_2', C_3'$ only depend on $q,L$.
If $v\le 0$, then it follows from (\ref{g10est}) that
\begin{align}\label{g10est2}
g_1(0)-\log f(w_c)&\ge
\left(-c_0v-\frac{1-\sqrt{q}}{\sqrt{q}}c_3L\right)d+\left(-\frac 1{\sqrt{q}}c_1L-\frac{(1-\sqrt{q})^2}{2q}c_3LK^{-1/3}\right)d^2\\
&+\left(-\frac 23-\frac 1{3q^{3/2}}-\left|\frac 23-\frac 1{3q^{3/2}}\right|c_1LK^{-1/3}\right)d^3.\notag
\end{align}
Choose $d=\epsilon\sqrt{-v}$. Then, by (\ref{g10est2}),
\begin{align}\label{g10est3}
g_1(0)-\log f(w_c)&\ge
c_0\epsilon(-v)^{3/2}\left[1-\left(\frac{1-\sqrt{q}}{\sqrt{q}}\right)\frac{c_3L}{-v}-\left(\frac 1{\sqrt{q}}c_1L+\frac{(1-\sqrt{q})^2}{2q}\frac{c_3L}{K^{1/3}}\right)\epsilon^2\frac 1{\sqrt{-v}}\right.\\
&-\left.\left(\frac 23+\frac 1{3q^{3/2}}+\left|\frac 23-\frac 1{3q^{3/2}}\right|\frac{c_1L}{K^{1/3}}\right)\epsilon^2\right] \notag
\end{align}
Choose $D_1$ large, depending on only $q,L$, so that
\begin{equation*}
\left(\frac{1-\sqrt{q}}{\sqrt{q}}\right)\frac{c_3L}{-v}\le \frac14,\quad \left(\frac 1{\sqrt{q}}c_1L+\frac{(1-\sqrt{q})^2}{2q}\frac{c_3L}{K^{1/3}}\right)\frac 1{\sqrt{-v}}\le 1,
\end{equation*}
if $\sqrt{-v}\ge D_1$. Since $k\ge 0$, there is a constant $D_2$ so that $\sqrt{-v}\le D_2K^{1/3}$. The condition (\ref{dcondition}) becomes
\begin{equation*}
\frac{C_1}{\sqrt{-v}}\le\epsilon\le\frac{C_2K^{1/3}}{\sqrt{-v}},
\end{equation*}
which is satisfied if
\begin{equation}\label{epsiloncondition}
\frac{C_1}{D_1}\le\epsilon\le\frac{C_2}{D_2}.
\end{equation}
We can choose $D_1$ so large that $C_1/D_1$ is as small as we want, and hence we can choose $\epsilon$ so small that
\begin{equation*}
\left(1+\frac 23+\frac 1{3q^{3/2}}+\left|\frac 23-\frac 1{3q^{3/2}}\right|\frac{c_1L}{K^{1/3}}\right)\epsilon^2\le \frac14.
\end{equation*}
It then follows from (\ref{g10est3}) that 
\begin{equation*}
g_1(0)-\log f(w_c)\ge\frac 12c_0\epsilon(-v)^{3/2}
\end{equation*}
for $\sqrt{-v}\ge D_1$. By adjusting $\mu_3$, we see that (\ref{Hklbest3}) holds if $v\le 0$.

If $v\ge 0$, we choose a $d$ satistying (\ref{dcondition}) depending on $q,L$, but not on $v$ or $K$. It follows from (\ref{g10est2}) that there are constants $\mu_1$
and $\mu_3'$, so that
\begin{equation*}
g_1(0)-\log f(w_c)\ge -\mu_1(v)_+-\mu_3'.
\end{equation*}
Hence (\ref{Hklbest3}) holds also when $v\ge 0$.

To prove (\ref{Hklbest4}) we consider instead
\begin{align*}
g_2(0)-\log f(w_c)&=k\log\left(1+\frac{\sqrt{q}}{1-\sqrt{q}}dK^{-1/3}\right)+(b+\ell)\log(1-dK^{-1/3})-\ell\log\left(1-\frac 1{1-\sqrt{q}}dK^{-1/3}\right)\\
&\le k\left(\frac{\sqrt{q}}{1-\sqrt{q}}dK^{-1/3}-\frac{q}{2(1-\sqrt{q})^2}d^2K^{-2/3}+\frac{q^{3/2}}{(1-\sqrt{q})^3}d^3K^{-1}\right)\\&+
(b+\ell)\left(-dK^{-1/3}-\frac 12d^2K^{-2/3}-\frac 13d^3K^{-1}\right)\\
&+\ell\left(\frac{1}{1-\sqrt{q}}dK^{-1/3}+\frac 1{2(1-\sqrt{q})^2}d^2K^{-2/3}+\frac{2}{(1-\sqrt{q})^3}d^3K^{-1}\right),
\end{align*}
by (\ref{logineq}) and (\ref{logineq2}). Into this estimate we insert the expressions in (\ref{klbscaling}), and after some computation we get
\begin{align*}
&g_2(0)-\log f(w_c)\le\left(\frac{\sqrt{q}}{1-\sqrt{q}}c_0v-c_3\xi\right)d\\
&+\frac 1{2(1-\sqrt{q})^2}\left(2\sqrt{q}c_1\eta-qc_0vK^{-1/3}+c_3(1-\sqrt{q})^2\xi K^{-1/3}\right)d^2\\
&+\frac 1{3(1-\sqrt{q})^3}\left(1+\sqrt{q}+q+(1+3\sqrt{q}-3q)c_1\eta K^{-1/3}+q^{3/2}c_0vK^{-2/3}-c_3(1-\sqrt{q})^3\xi K^{-2/3}\right)d^3.
\end{align*}
We can now proceed in analogy with the previous case to show (\ref{Hklbest4}).

\end{proof}

\section{More formulas for the two-time distribution}\label{SecFormulas}
In this section we give an alternative formula for the two-time distribution, see Proposition \ref{propQ} below.

Recall the notation (\ref{Gxieta}),
\begin{equation}\label{Gxieta2}
G_{\xi,\eta}(z)=e^{\frac 13 z^3+\eta z^2-\xi z}.
\end{equation}
Looking at (\ref{Gint}), we see that it is natural to write
\begin{equation}\label{Aixieta}
\Ai_{\xi,\eta}(x,y)=\Ai(\xi+\eta^2+x+y)e^{(\xi+x+y)\eta+\frac 23\eta^3},
\end{equation}
since we then get the formulas
\begin{align}\label{GAiformulas}
\frac 1{\pii}\int_{\Gamma_D}G_{\xi+x+y,\eta}(z)\,dz&=\Ai_{\xi,\eta}(x,y),\\
\frac 1{\pii}\int_{\Gamma_{-d}}\frac{d\zeta}{G_{\xi+x+y,\eta}(\zeta)}&=\Ai_{\xi,-\eta}(x,y),
\end{align}
for any $d,D>0$. We can think of (\ref{Aixieta}) as the kernel of an integral operator on $L^2(\mathbb{R}_+)$.

In order to give a different formula for the two-time distribution, we need to define several kernels. We will write
\begin{equation}\label{alphaprime}
\alpha'=(1+\alpha^3)^{1/3}=\left(\frac{t_2}{\Delta t}\right)^{1/3}.
\end{equation}
Let
\begin{align}\label{M1}
M_1(v_1,v_2)&=\frac{e^{\delta(v_1-v_2)}}{(\pii)^2}\int_{\Gamma_D}dz\int_{\Gamma_{-d}}d\zeta\frac{G_{\xi_1+v_1,\eta_1}(z)}{G_{\xi_1+v_2,\eta_1}(\zeta)(z-\zeta)}\\
&=e^{\delta(v_1-v_2)}\int_0^\infty\Ai_{\xi_1,\eta_1}(v_1,\lambda)\Ai_{\xi_1,-\eta_1}(\lambda,v_2)\,d\lambda,\notag
\end{align}
\begin{align}\label{M2}
M_2(v_1,v_2)&=\frac{1}{(\pii)^2\alpha'}\int_{\Gamma_D}dz\int_{\Gamma_{-d}}d\zeta\frac{G_{\xi_2+v_2/\alpha',\eta_2}(z)}{G_{\xi_2+v_1/\alpha',\eta_2}(\zeta)(z-\zeta)}\\
&=\frac 1{\alpha'}\int_0^\infty\Ai_{\xi_2,-\eta_2}(v_1/\alpha',\lambda)\Ai_{\xi_2,\eta_2}(\lambda,v_2/\alpha')\,d\lambda,\notag
\end{align}
and
\begin{align}\label{M3}
M_3(v_1,v_2)&=\frac{1}{(\pii)^2}\int_{\Gamma_D}dz\int_{\Gamma_{-d}}d\zeta\frac{G_{\Delta\xi+v_2,\Delta\eta}(z)}{G_{\Delta\xi+v_1,\Delta\eta}(\zeta)(z-\zeta)}\\
&=\int_0^\infty\Ai_{\Delta\xi,-\Delta\eta}(v_1,\lambda)\Ai_{\Delta\xi,\Delta\eta_1}(\lambda,v_2)\,d\lambda,\notag
\end{align}

We will also need the following kernels. Let
\begin{equation}\label{dDconditions}
0<d_1<\alpha d_2<d_3,\quad 0<D_1<\alpha D_2<D_3.
\end{equation}
Define
\begin{align}\label{k1def}
&k_1(v_1,v_2)\\&=\frac{\alpha}{(\pii)^4}\int_{\Gamma_{D_3}}dz\int_{\Gamma_{D_2}}dw\int_{\Gamma_{-d_3}}d\zeta\int_{\Gamma_{-d_2}}d\omega
\frac{G_{\xi_1,\eta_1}(z)G_{\Delta\xi+v_2,\Delta\eta}(w)}{G_{\xi_1,\eta_1}(\zeta)G_{\Delta\xi+v_1,\Delta\eta}(\omega)(z-\zeta)(z-\alpha w)(\alpha\omega-\zeta)}\notag\\
&=\alpha\int_{\mathbb{R}_+^3}\Ai_{\Delta\xi,-\Delta\eta}(v_1,-\alpha\lambda_1)\Ai_{\xi_1,-\eta_1}(\lambda_1,\lambda_2)\Ai_{\xi_1,\eta_1}(\lambda_2,\lambda_3)
\Ai_{\Delta\xi,\Delta\eta}(-\alpha\lambda_3,v_2)\,d^3\lambda,\notag
\end{align}
\begin{align}\label{k2def}
&k_2(v_1,v_2)\\&=\frac{\alpha}{(\pii)^3}\int_{\Gamma_{D_3}}dz\int_{\Gamma_{D_2}}dw\int_{\Gamma_{-d_2}}d\omega
\frac{G_{\xi_1,\eta_1}(z)G_{\Delta\xi+v_2,\Delta\eta}(w)}{G_{\xi_2+v_1/\alpha',\eta_2}(\omega)(\alpha'z-\alpha\omega)(z-\alpha w)}\notag\\
&=\alpha\int_{\mathbb{R}_+^2}\Ai_{\xi_2,-\eta_2}(\frac{v_1}{\alpha'},\alpha\lambda_1)\Ai_{\xi_1,\eta_1}(\alpha'\lambda_1,\lambda_2)
\Ai_{\Delta\xi,\Delta\eta}(-\alpha\lambda_2,v_2)\,d^2\lambda,\notag
\end{align}
\begin{align}\label{k3def}
k_3(v_1,v_2)&=\frac{\alpha e^{-\delta v_2}}{(\pii)^2}\int_{\Gamma_{-d_3}}d\zeta\int_{\Gamma_{-d_2}}d\omega
\frac 1{G_{\xi_1+v_2,\eta_1}(\zeta)G_{\Delta\xi+v_1,\Delta\eta}(\omega)(\alpha\omega-\zeta)}\\
&=\alpha e^{-\delta v_2}\int_{\mathbb{R}_+}\Ai_{\Delta\xi,-\Delta\eta}(v_1,-\alpha\lambda)\Ai_{\xi_1,-\eta_1}(\lambda,v_2)\,d\lambda,\notag
\end{align}
\begin{align}\label{k4def}
k_4(v_1,v_2)&=\frac{\alpha e^{-\delta v_2}}{\alpha'\pii}\int_{\Gamma_{-d_2}}\frac{d\omega}{G_{\xi_2+(v_1+\alpha v_2)/\alpha',\eta_2}(\omega)}\\
&=e^{-\delta v_2}\frac {\alpha}{\alpha'}\Ai_{\xi_2,-\eta_2}\left(\frac{v_1}{\alpha'},\frac{\alpha v_2}{\alpha'}\right),\notag
\end{align}
\begin{align}\label{k5def}
&k_5(v_1,v_2)\\&=\frac{\alpha}{(\pii)^3}\int_{\Gamma_{D_2}}dw\int_{\Gamma_{-d_3}}d\zeta\int_{\Gamma_{-d_2}}d\omega
\frac{G_{\xi_2+v_2/\alpha',\eta_2}(w)}{G_{\xi_1,\eta_1}(\zeta)G_{\Delta\xi+v_1,\Delta\eta}(\omega)(\alpha w-\alpha'\zeta)(\alpha\omega-\zeta)}\notag\\
&=\alpha\int_{\mathbb{R}_+^2}\Ai_{\Delta\xi,-\Delta\eta}(v_1,-\alpha\lambda_1)\Ai_{\xi_1,-\eta_1}(\lambda_1,\alpha'\lambda_2)
\Ai_{\xi_2,\eta_2}(\alpha\lambda_2,\frac{v_2}{\alpha'})\,d^2\lambda,\notag
\end{align}
\begin{align}\label{k6def}
&k_6(v_1,v_2)\\&=\frac{e^{\delta v_1}}{(\pii)^4}\int_{\Gamma_{D_3}}dz_1\int_{\Gamma_{D_1}}dz_2\int_{\Gamma_{D_2}}dw\int_{\Gamma_{-d_1}}d\zeta
\frac{G_{\xi_1,\eta_1}(z_1)G_{\xi_1+v_1,\eta_1}(z_2)G_{\Delta\xi+v_2,\Delta\eta}(w)}{G_{\xi_1,\eta_1}(\zeta)(z_1-\zeta)(z_2-\zeta)(z_1-\alpha w)}\notag\\
&=e^{\delta v_1}\int_{\mathbb{R}_+^3}\Ai_{\xi_1,\eta_1}(v_1,\lambda_1)\Ai_{\xi_1,-\eta_1}(\lambda_1,\lambda_2)\Ai_{\xi_1,\eta_1}(\lambda_2,\lambda_3)
\Ai_{\Delta\xi,\Delta\eta}(-\alpha\lambda_3,v_2)\,d^3\lambda,\notag
\end{align}
and
\begin{align}\label{k7def}
&k_7(v_1,v_2)\\&=\frac{e^{\delta v_1}}{(\pii)^3}\int_{\Gamma_{D_1}}dz\int_{\Gamma_{D_2}}dw\int_{\Gamma_{-d_1}}d\zeta
\frac{G_{\xi_1+v_1,\eta_1}(z)G_{\xi_2+v_2/\alpha',\eta_2}(w)}{G_{\xi_1,\eta_1}(\zeta)(\alpha w-\alpha'\zeta)(z-\zeta)}\notag\\
&=e^{\delta v_1}\int_{\mathbb{R}_+^2}\Ai_{\xi_1,\eta_1}(v_1,\alpha'\lambda_1)\Ai_{\xi_1,-\eta_1}(\lambda_1,\alpha'\lambda_2)
\Ai_{\xi_2,\eta_2}(\alpha\lambda_2,\frac{v_2}{\alpha'})\,d^2\lambda.\notag
\end{align}
The kernels $M_i$ and $k_i$ depend on the parameters $\alpha, \xi_1, \Delta\xi, \eta_1, \Delta\eta$ and $\delta$. When we need to indicate this dependence we write
$M_i(\alpha,  \xi_1, \Delta\xi, \eta_1, \Delta\eta, \delta)$ and $k_i(\alpha,  \xi_1, \Delta\xi, \eta_1, \Delta\eta, \delta)$. 
We then think of $\xi_2$ and $\eta_2$ as functions of $\alpha$, $\xi_1$ and $\Delta\xi$, and
$\alpha$, $\eta_1$ and $\Delta\eta$ respectively. Explicitly,
\begin{equation}\label{xi2}
\xi_2=\xi_2(\alpha, \xi_1, \Delta\xi)=\frac 1{\alpha'}(\alpha\xi_1+\Delta\xi),
\end{equation}
\begin{equation}\label{eta2}
\eta_2=\eta_2(\alpha, \eta_1, \Delta\eta)=\frac 1{\alpha'^2}(\alpha^2\eta_1+\Delta\eta).
\end{equation}

Let
\begin{equation}\label{Yspace}
Y=L^2(\mathbb{R}_+)\oplus L^2(\mathbb{R}_+)
\end{equation}
On $Y$, we define a matrix operator kernel $Q(u)$ by
\begin{equation}\label{Qu}
Q(u)=\begin{pmatrix} Q_{11}(u) &   Q_{12}(u)\\
                                             Q_{21}(u) & Q_{22}(u)
                                              
                    \end{pmatrix},
 \end{equation}
 where
 \begin{align}\label{Qdef}
 Q_{11}(u)&=(2-u-u^{-1})k_1+(u-1)(k_2+k_5)+(u-1)M_3-uM_2\\
 Q_{12}(u)&=(u+u^{-1}-2)k_3+(1-u)k_4\notag\\
 Q_{21}(u)&=(1-u^{-1})k_6-k_7\notag\\
 Q_{22}(u)&=(u^{-1}-1)M_1.\notag
 \end{align}
We will write  $Q(u,\alpha,  \xi_1, \Delta\xi, \eta_1, \Delta\eta, \delta)$ to indicate the dependence on all parameters.

\begin{proposition}\label{propQ}
The two-time distribution (\ref{ftt}) is given by
 \begin{equation}\label{Fnew1}
F_{\text{two-time}}(\xi_1,\eta_1;\xi_2,\eta_2;\alpha)=\frac{1}{\pii}\int_{\gamma_r}\frac 1{u-1}\det(I+Q(u))_Y\,du,
\end{equation}
where $r>1$.
\end{proposition}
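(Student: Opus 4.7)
The plan is to derive (\ref{Fnew1}) from (\ref{ftt}) by factoring the kernel and applying the cyclic identity $\det(I+AB)_X=\det(I+BA)_Y$. The key mechanism is the Airy-kernel factorization $K_{\Ai}(x,y)=\int_0^\infty \Ai(x+s)\Ai(y+s)\,ds$, together with its shifted form (\ref{Aixieta}), which allows every occurrence of $K_{\Ai}$ in $S_1, T_1, S_2, S_3$ to be written as an operator composition going through $L^2(\mathbb{R}_+)$. Since $S_1$ and $T_1$ contain two Airy kernels each, while $S_2$ and $S_3$ contain only one, this produces a natural factorization $K(u)=A(u)B(u)$ with $A(u):Y\to X$ and $B(u):X\to Y$, where the two summands of $Y=L^2(\mathbb{R}_+)\oplus L^2(\mathbb{R}_+)$ correspond to the two internal variables associated with the $\xi_1,\eta_1$ and $\Delta\xi,\Delta\eta$ Airy factors, respectively.

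Concretely, I would first use the integral representations (\ref{S1intformula})--(\ref{S3intformula}) together with (\ref{GAiformulas}) to express $S_1, T_1, S_2^*, S_3^*$ as compositions of shifted Airy kernels, and combine these with the $2\times 2$ block form (\ref{Kuv}) arising from the decomposition $X=L^2(\mathbb{R}_-)\oplus L^2(\mathbb{R}_+)$ and the row weights $1,u$ attached to $K(u)$. Applying cyclic invariance reduces the determinant to one on $Y$, leaving the task of computing $B(u)A(u)$ entry by entry and matching with $Q(u)$ from (\ref{Qdef}). Writing $R_u = S_1-u^{-1}T_1+(1-u^{-1})(S_2^*-S_3^*)$, each cross term between the two rows of $K(u)$ (weighted by $1$ or $u$) and the projections onto $L^2(\mathbb{R}_-)$ or $L^2(\mathbb{R}_+)$ in $X$ produces either a long composition of three or four shifted Airy kernels, matching one of $k_1,\ldots,k_7$, or a shorter single- or double-Airy composition, matching $M_1, M_2, M_3$ or a simpler $k_j$. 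The Laurent coefficients in $Q(u)$ such as $2-u-u^{-1}=-(1-u)(1-u^{-1})$, $u-1$, and $u^{-1}-1$ emerge precisely from multiplying the row weights $1,u$ of $K(u)$ by the coefficients $1, u^{-1}$ in the expansion of $R_u$ and collecting the resulting pairings.

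The main obstacle will be extensive bookkeeping. One must track the exponential prefactors $e^{(\eta_1-\delta)x+(\delta-\alpha\Delta\eta)y}$ and the analogous factors in $T_1, S_2, S_3$ through every composition, and verify that the prefactors surviving in $B(u)A(u)$ match those prescribed for $M_1, M_2, M_3, k_1,\ldots,k_7$, for example $e^{\delta(v_1-v_2)}$ in $M_1$, $e^{-\delta v_2}$ in $k_3,k_4$, and $e^{\delta v_1}$ in $k_6,k_7$. Care is also needed to interact the indicator functions $1(x>0), 1(y<0)$ built into $S_2^*, S_3^*$ with the block projections in $X$, since this is what yields the shorter $k_j$ rather than full 4-Airy compositions, and to choose the contours consistently with (\ref{dDconditions}) so that all intermediate $x,y$-integrations over $\mathbb{R}_\pm$ converge absolutely and the contour deformations are justified. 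Once this organization is carried out, the identification of each entry of $Q(u)$ is a direct computation using (\ref{GAiformulas}) and its variants.
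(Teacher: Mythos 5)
Your plan is correct and takes essentially the same route as the paper: the paper factors $K(u)=pq$ through $Y=L^2(\mathbb{R}_+)\oplus L^2(\mathbb{R}_+)$ using the explicit kernels $p_1,\dots,p_4,q_1,q_2$ of (\ref{pqkernels}), whose intermediate variable is precisely the Airy-factorization variable of the $(\xi_1,\eta_1)$ and $(\Delta\xi,\Delta\eta)$ kernels, then uses $\det(I+pq)_X=\det(I+qp)_Y$ and matches $qp$ entrywise with $Q(u)$ in (\ref{Qdef}). The one device you do not name is the residue identity $S_4=S_1-T_1$ (shifting the $z$-contour past the pole $z=\alpha w$), which the paper applies before factoring so that the compositions come out directly as $k_1,\dots,k_7,M_1,M_2,M_3$; in your ordering the same contour shift would simply occur at the entry-matching stage.
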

We will give the proof below. The formula (\ref{Fnew1}) is suitable for investigating the limit $\alpha\to 0$ (long time separation). For more on this limit see \cite{DJL}.
To study the limit $\alpha\to\infty$ (short time separation), we can use (\ref{Fnew1}) and the next Proposition which gives an $\alpha$ and $1/\alpha$ relation. Let
\begin{equation}\label{betadef}
\beta=\frac 1{\alpha},\quad \beta'=(1+\beta^3)^{1/3}=\frac{\alpha'}{\alpha}.
\end{equation}
To indicate the dependence of the kernel $K(u)$ on all parameters we write $K(u,\alpha,  \xi_1, \Delta\xi, \eta_1, \Delta\eta, \delta)$.

\begin{proposition}\label{propshortlongrelation}
We have the formula
 \begin{equation}\label{Fshortlong}
F_{\text{two-time}}(\xi_1,\eta_1;\xi_2,\eta_2;\alpha)=\frac{1}{\pii}\int_{\gamma_r}\frac 1{u-1}\det(I+K(u^{-1},\beta,\Delta\xi,\xi_1,\Delta\eta,\eta_1,\delta))_X\,du,
\end{equation}
where $r>1$.
\end{proposition}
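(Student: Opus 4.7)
The plan is to establish the identity at the level of integrands, i.e.\ to prove that
\begin{equation*}
\det(I + K(u, \alpha, \xi_1, \Delta\xi, \eta_1, \Delta\eta, \delta))_X = \det(I + K(u^{-1}, \beta, \Delta\xi, \xi_1, \Delta\eta, \eta_1, \delta))_X
\end{equation*}
as functions of $u$. Once this is in hand, the proposition follows immediately because the two contour integrals in (\ref{Fshortlong}) share the same integrand. To prove this, I would construct an invertible operator $V\colon X\to X$ implementing the intertwining
\begin{equation*}
V\, K(u,\alpha,\xi_1,\Delta\xi,\eta_1,\Delta\eta,\delta)\, V^{-1} = K(u^{-1},\beta,\Delta\xi,\xi_1,\Delta\eta,\eta_1,\delta),
\end{equation*}
so that equality of Fredholm determinants follows from conjugation invariance.

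The operator $V$ should combine three ingredients: (i) swapping the two summands of $X=L^2(\mathbb{R}_-)\oplus L^2(\mathbb{R}_+)$; (ii) rescaling the spatial coordinate by $\alpha$, sending $x\in\mathbb{R}_+$ to $-x/\alpha\in\mathbb{R}_-$ and $y\in\mathbb{R}_-$ to $-\alpha y\in\mathbb{R}_+$; and (iii) multiplicative similarity factors of the form $e^{cx}$ absorbing the differing exponential prefactors in $S_1,T_1,S_2,S_3$. Under this conjugation the $2\times 2$ block matrix of $K(u)$ gets permuted, and the overall factor $u^{1(x\ge 0)}$ becomes $u^{1(x\le 0)} = u\cdot (u^{-1})^{1(x\ge 0)}$, which is exactly what is needed to convert $K(u,\ldots)$ into $K(u^{-1},\ldots)$ up to an overall factor of $u$ that must be absorbed by the parameter swap. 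The verification then reduces to four kernel-level identities: two for $S_1$ and $T_1$, which follow from a change of variable $s\to\alpha s$ in the inner integrals (\ref{S1}) and (\ref{T1}); and two expressing that $S_2$ and $S_3$ are interchanged under the intertwining, the key point being that the $\alpha$-rescaling inside $V$ precisely converts the $K_{\text{Ai}}(\Delta\xi+\Delta\eta^2+\alpha\cdot,\Delta\xi+\Delta\eta^2+\alpha\cdot)$ structure of $S_2$ with parameters $(\alpha,\Delta\xi,\Delta\eta)$ into the $K_{\text{Ai}}(\xi_1+\eta_1^2-\cdot,\xi_1+\eta_1^2-\cdot)$ structure of $S_3$ with swapped parameters $(\xi_1,\eta_1)$.

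The main obstacle will be the bookkeeping across all blocks. One has to track how the indicators $1(x>0)$ and $1(y<0)$ embedded in (\ref{Sxy}) and (\ref{Txy}) get flipped to $1(x<0)$ and $1(y>0)$ by the swap in $V$, so that $S_2$ (active for $x>0$) and $S_3$ (active for $y<0$) land in the correct blocks after the intertwining. Simultaneously the exponential factors involving $\delta,\eta_1$ and $\delta,\alpha\Delta\eta$ must combine with the similarity multiplications $e^{cx}$ in $V$ to produce the analogous factors in the swapped kernel, where the corresponding constraints are $\delta>\Delta\eta$ and $\delta>\eta_1/\alpha$. The condition (\ref{deltacondition}) together with its swapped analog can be satisfied by a single choice of $\delta$, so the construction is self-consistent. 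Once the four kernel identities are verified via changes of variable in the defining integrals, the intertwining holds as an operator equation on $X$, the two Fredholm determinants coincide, and the proposition follows.
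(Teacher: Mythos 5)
Your overall strategy is the same as the paper's: prove the integrand-level identity $\det(I+K(u,\alpha,\xi_1,\Delta\xi,\eta_1,\Delta\eta,\delta))_X=\det(I+K(u^{-1},\beta,\Delta\xi,\xi_1,\Delta\eta,\eta_1,\delta))_X$ and integrate. But the mechanism you propose — a pure similarity $VK(u)V^{-1}$ with $V$ built from a component swap, the rescaling/reflection $x\mapsto -x/\alpha$, $y\mapsto-\alpha y$, and multiplication by exponential factors — cannot work, and this is a genuine gap. Any such conjugation sends a kernel $A(x,y)$ to $m(x)A(\phi(x),\phi(y))m(y)^{-1}$ (times a Jacobian), so it transforms the two arguments separately and preserves which argument is coupled to which parameter group. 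In $S_1$ and $T_1$ (see (\ref{S1}), (\ref{T1})) the $x$-dependence sits inside the factor $K_{\text{Ai}}(\xi_1+\eta_1^2-s,\xi_1+\eta_1^2-x)$ and the $y$-dependence inside $K_{\text{Ai}}(\Delta\xi+\Delta\eta^2+\alpha s,\Delta\xi+\Delta\eta^2+\alpha y)$; after the parameter swap $(\xi_1,\eta_1)\leftrightarrow(\Delta\xi,\Delta\eta)$, $\alpha\to\beta$, the target kernel has the $x$-dependence in the $(\Delta\xi,\Delta\eta)$-factor. No substitution in the $s$-integral changes which Airy factor carries the $x$, so the claimed intertwining fails for these blocks (for $S_2,S_3$ you do not see the problem because those kernels are symmetric up to the exponential prefactor). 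The symmetry that actually holds also swaps the two arguments: $\alpha^{-1}S(\alpha,\xi_1,\Delta\xi,\eta_1,\Delta\eta,\delta)(x/\alpha,y/\alpha)=T(\beta,\Delta\xi,\xi_1,\Delta\eta,\eta_1,\beta\delta)(-y,-x)$ and likewise with $S$ and $T$ interchanged, so that $R(u)$ rescaled and \emph{transposed} equals $u^{-1}R(u^{-1})$ at the swapped parameters. The fix is therefore to use, in addition to conjugation, the invariance of the Fredholm determinant under transposition of the kernel, which is exactly how the paper proceeds.

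Two further points in your bookkeeping are tied to the same omission. First, the ``overall factor of $u$ that must be absorbed by the parameter swap'' is not something that can be absorbed by fiat: an overall scalar multiplying the kernel changes the determinant. In the correct argument the factor $u^{-1}$ arises from the relation $R$-transposed $=u^{-1}\,R(u^{-1})$ at swapped parameters, and the remaining diagonal factor $\operatorname{diag}(I,u^{-1}I)$ is moved from one side of the block matrix to the other using $\det(I+AB)=\det(I+BA)$, which reproduces precisely the block structure (\ref{Kuv}) with $u$ replaced by $u^{-1}$. Second, the natural form of the symmetry produces the parameter $\beta\delta$, not $\delta$, in the swapped kernel; one then invokes the fact that the determinant is independent of $\delta$ within the admissible range (the $\delta$-dependence is a conjugation by $e^{-\delta x}$), choosing $\delta$ large enough that both (\ref{deltacondition}) and its swapped analogue $\delta>\max(\Delta\eta,\beta\eta_1)$ hold. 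Your remark that a single $\delta$ works is essentially this point, but it needs the $\delta$-independence statement to be made explicit, since a fixed $\delta$ satisfying (\ref{deltacondition}) need not satisfy the swapped condition in general.
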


The proof is given below. Recall that
\begin{equation}\label{Deltaxieta2}
\Delta\xi=\alpha'\xi_2-\alpha\xi_1,\quad \Delta\eta=\alpha'^2\eta_2-\alpha^2\eta_1.
\end{equation}
Combining the two Propositions above we see that
 \begin{equation}\label{Fnew2}
F_{\text{two-time}}(\xi_1,\eta_1;\xi_2,\eta_2;\alpha)=\frac{1}{\pii}\int_{\gamma_r}\frac 1{u-1}\det(I+Q(u^{-1},\beta,\Delta\xi,\xi_1,\Delta\eta,\eta_1,\delta))_Y\,du.
\end{equation}
Note that $\alpha$ is replaced by $\beta=1/\alpha$, $\xi_1$ and $\Delta\xi$, as well as $\eta_1$ and $\Delta\eta$, are interchanged, and $u$ is replaced by $u^{-1}$.
This formula is suitable for studying the limit $\alpha\to\infty$ since this corresponds to $\beta\to 0$, see \cite{DJL}. Note that combining (\ref{xi2}), (\ref{eta2}) and 
(\ref{Deltaxieta2}), we get
\begin{equation}\label{xi2eta2}
\xi_2=\xi_2(\beta,\Delta\xi,\xi_1),\quad \eta_2=\eta_2(\beta,\Delta\eta,\eta_1).
\end{equation}
We now turn to the proofs of the Propositions.

\begin{proof}[Proof of Proposition \ref{propQ}]
Define the kernels
\begin{align}\label{pqkernels}
p_1(x,v)&=-\frac{e^{-\delta x}}{(\pii)^3}\int_{\Gamma_{D_3}}dz\int_{\Gamma_{D_2}}dw\int_{\Gamma_{-d_1}}d\zeta
\frac{G_{\xi_1,\eta_1}(z)G_{\Delta\xi+v,\Delta\eta}(w)}{G_{\xi_1-x,\eta_1}(\zeta)(z-\zeta)(z-\alpha w)},\\
p_2(x,v)&=-\frac{e^{-\delta x}1(x>0)}{\pii}\int_{\Gamma_{D_2}}G_{\Delta\xi+\alpha x+v,\Delta\eta}(w)\,dw,\notag\\
p_3(x,v)&=\frac{e^{-\delta (x+v)}}{\pii}\int_{\Gamma_{-d_1}}\frac{d\zeta}{G_{\xi_1+v-x,\eta_1}(\zeta)},\notag\\
p_4(x,v)&=-\frac{e^{-\delta x}}{(\pii)^2}\int_{\Gamma_{D_2}}dw\int_{\Gamma_{-d_1}}d\zeta
\frac{G_{\xi_2+v/\alpha',\eta_2}(\alpha' w)}{G_{\xi_1-x,\eta_1}(\zeta)(\alpha w-\zeta)},\notag\\
q_1(v,y)&=\frac{\alpha e^{\delta y}}{\pii}\int_{\Gamma_{-d_2}}\frac{d\omega}{G_{\Delta\xi+\alpha y+v,\Delta\eta}(\omega)},\notag\\
q_2(v,y)&=\frac{e^{\delta (y+v)}1(y<0)}{\pii}\int_{\Gamma_{D_1}}G_{\xi_1+v-y,\eta_1}(z)\,dz.\notag
\end{align}
The factors involving $\delta v$ have been introduced in order to get well-defined operators. We also define
\begin{equation}\label{S4intformula}
S_4(x,y)=-\frac{\alpha e^{\delta(y-x)}}{(\pii)^3}\int_{\Gamma_{D_2}}dw\int_{\Gamma_{-d_1}}d\zeta\int_{\Gamma_{-d_2}}d\omega
\frac{G_{\xi_2,\eta_2}(\alpha'w)}{G_{\xi_1-x,\eta_1}(\zeta)G_{\Delta\xi+\alpha y,\Delta\eta}(\omega)(\alpha w-\zeta)(w-\omega)}.
\end{equation}
From (\ref{S1intformula}) and (\ref{T1intformula}), we see that
\begin{equation}\label{S4S1T1}
S_4=S_1-T_1,
\end{equation}
by moving the $z$-integration contour. We then pick up a contribution from the pole at $z=\alpha w$, which gives $S_4$. It follows from
(\ref{T1intformula}), (\ref{S2intformula}), (\ref{S3intformula}), (\ref{S4intformula}) and (\ref{pqkernels}) that
\begin{align}\label{STfactor}
T_1(x,y)&=-\int_{\mathbb{R}_+}p_1(x,v)q_1(v,y)\,dv,\\
1(x>0)S_2(x,y)&=-\int_{\mathbb{R}_+}p_2(x,v)q_1(v,y)\,dv,\notag\\
S_3(x,y)1(y<0)&=\int_{\mathbb{R}_+}p_3(x,v)q_2(v,y)\,dv,\notag\\
S_4(x,y)&=\int_{\mathbb{R}_+}p_4(x,v)q_1(v,y)\,dv.\notag
\end{align}
From the definition of $R(u)$, (\ref{S4S1T1}) and (\ref{STfactor}), we see that
\begin{equation}\label{RuFormula}
R(u)(x,y)=(u^{-1}-1)\int_{\mathbb{R}_+}p_1(x,v)q_1(v,y)+p_2(x,v)q_1(v,y)+p_3(x,v)q_2(v,y)\,dv+\int_{\mathbb{R}_+}p_4(x,v)q_1(v,y)\,dv.
\end{equation}
Let $p_i^{\pm}$ be the operator from $L^2(\mathbb{R}_+)$ to $L^2(\mathbb{R}_\pm)$ with kernel $p_i(x,v)$, and 
$q_i^{\pm}$ be the operator from $L^2(\mathbb{R}_\pm)$ to $L^2(\mathbb{R}_+)$ with kernel $q_i(v,y)$. From the definition of $K(u)$ and (\ref{RuFormula}) it follows that
\begin{equation*}
K(u)=pq,
\end{equation*}
where
\begin{equation*}
p= \begin{pmatrix} &(u^{-1}-1)p_1^-+p_4^- &(u^{-1}-1)p_3^-\\
                              &(1-u)p_1^++(1-u)p_2^++up_4^+ &(1-u)p_3^+
                              \end{pmatrix}
\end{equation*}
and
\begin{equation*}
q= \begin{pmatrix} &q_1^- &q_1^+\\ &q_2^- &0
                              \end{pmatrix},
\end{equation*}
are matrix operators $p:Y\mapsto X$ and $q:X\mapsto Y$. Note that $p_2^-=q_2^+=0$.
Let
\begin{equation*}
Q(u)=qp
\end{equation*}
which gives an operator from $Y$ to itself. A straightforward computation using (\ref{pqkernels}), (\ref{k1def}) - (\ref{k7def})  and (\ref{M1}) -   (\ref{M3}) shows that
\begin{equation}
\begin{matrix}
&q_1^-p_1^-=-k_1(\alpha),  &q_1^+p_1^+=k_1(\alpha)-k_2(\alpha), &q_1^+p_2^+=-M_3, \\
&q_1^-p_3^-=k_3(\alpha),   &q_1^+p_3^+=-k_3(\alpha)+k_4(\alpha), &q_1^-p_4^-=-k_5(\alpha), \\
&q_1^+p_4^+=k_5(\alpha)-M_2(\alpha), &q_2^-p_1^-=-k_6(\alpha), &q_2^-p_4^-=-k_7(\alpha),\\
&q_2^-p_3^-=M_1. & & 
\end{matrix}
\end{equation}

From this we see that $Q(u)$ is given by (\ref{Qdef}). In these computations we use (\ref{xi2}) and (\ref{eta2}) to get $\xi_2,\eta_2$ from $\xi_1,\Delta\xi,  \eta_1,\Delta\eta$.
The Proposition now follows from
\begin{equation*}
\det(I+K(u))_X= \det(I+pq)_X=\det(I+qp)_Y=\det(I+Q(u))_Y.
\end{equation*}                  
\end{proof}

\begin{proof}[Proof of Proposition \ref{propshortlongrelation}]
To indicate the dependence of $S$, $T$ and $R(u)$ on all parameters we write $S(\alpha,\xi_1,\Delta\xi,\eta_1,\Delta\eta,\delta)$ etc. It is straightforward to check from the definitions that
\begin{equation*}
\frac 1{\alpha}S(\alpha,\xi_1,\Delta\xi, \eta_1,\Delta\eta,\delta)(\frac x{\alpha},\frac y{\alpha})=T(\beta,\Delta\xi,\xi_1,\Delta\eta,\eta_1,\beta\delta)(-y,-x),
\end{equation*}
and
\begin{equation*}
\frac 1{\alpha}T(\alpha,\xi_1,\Delta\xi, \eta_1,\Delta\eta,\delta)(\frac x{\alpha},\frac y{\alpha})=S(\beta,\Delta\xi,\xi_1,\Delta\eta,\eta_1,\beta\delta)(-y,-x).
\end{equation*}
It follows that
\begin{equation*}
\frac 1{\alpha}R(u,\alpha,\xi_1,\Delta\xi, \eta_1,\Delta\eta,\delta)(\frac x{\alpha},\frac y{\alpha})=u^{-1}R(u^{-1},\beta,\Delta\xi,\xi_1,\Delta\eta,\eta_1,\beta\delta)(-y,-x).
\end{equation*}
If we write
\begin{equation*}
\tilde{R}(u)(x,y)=R(u^{-1},\beta,\Delta\xi,\xi_1,\Delta\eta,\eta_1,\beta\delta)(x,y),
\end{equation*}
we see that
\begin{equation*}
\frac 1{\alpha}R(u)(-\frac y{\alpha},-\frac x{\alpha})=u^{-1}\tilde{R}(u^{-1}(x,y).
\end{equation*}
Let $K^*_{\alpha}(u)(x,y)=\alpha^{-1}K(\alpha^{-1}y,\alpha^{-1}x)$, and define $V:X\mapsto X$ by
\begin{equation*}
V\begin{pmatrix} f_1(x) \\ f_2(x) \end{pmatrix}
=\begin{pmatrix} f_2(-x) \\ f_1(-x) \end{pmatrix}.
\end{equation*}
Note that $V^2=I$. Since taking the adjoint and rescaling the kernel does not change the Fredholm determinant, we see that
\begin{equation*}
\det(I+K(u))_{X}=\det(I+K^*_{\alpha}(u))_X=\det(I+VK^*_{\alpha}(u)V)_X,
\end{equation*}
Using these definitions a computation shows that
\begin{equation*}
VK^*_{\alpha}(u)V=\begin{pmatrix} &\tilde{R}(u^{-1}(x,y) &\tilde{R}(u^{-1}(x,y) \\ &\tilde{R}(u^{-1}(x,y) &\tilde{R}(u^{-1}(x,y)
\end{pmatrix}
\begin{pmatrix} &I &0\\&0 &u^{-1}I \end{pmatrix}
\end{equation*}
This operator has the same determinant as
\begin{align*}
&\begin{pmatrix} &I &0\\&0 &u^{-1}I \end{pmatrix}
\begin{pmatrix} &\tilde{R}(u^{-1}(x,y) &\tilde{R}(u^{-1}(x,y) \\ &\tilde{R}(u^{-1}(x,y) &\tilde{R}(u^{-1}(x,y)
\end{pmatrix}=
\begin{pmatrix} &\tilde{R}(u^{-1}(x,y) &\tilde{R}(u^{-1}(x,y)\\ &u^{-1}\tilde{R}(u^{-1}(x,y) &u^{-1}\tilde{R}(u^{-1}(x,y)
\end{pmatrix}\\
&=K(u^{-1},\beta,\Delta\xi,\xi_1,\Delta\eta,\eta_1,\beta\delta)(x,y).
\end{align*}
Thus,
\begin{align*}
&\det(I+K(u,\alpha,\xi_1,\Delta\xi,\eta_1,\Delta\eta,\delta))_X=\det(I+K(u^{-1},\beta,\Delta\xi,\xi_1,\Delta\eta,\eta_1,\beta\delta))_X\\&=
\det(I+K(u^{-1},\beta,\Delta\xi,\xi_1,\Delta\eta,\eta_1,\delta)_X
\end{align*}
since the Fredholm determinant is independent of the value of $\delta$ as long as the condition (\ref{deltacondition}) is satisfied. Note that
this condition is $\delta>\max(\eta_1,\alpha\Delta\eta)$ so $\beta\delta>\max(\Delta\eta,\beta\eta_1)$ and we can replace $\beta\delta$ with
$\delta$ as long as $\delta>\max(\Delta\eta,\beta\eta_1)$.
\end{proof}

\section{Relation to the previous two-time formula}\label{secoldformula}

The approach in the present paper can be modified to study the probability
\begin{equation}\label{paA2}
p(a;A)=\mathbb{P}[G(m,n)=a,\,G(M,N)<A],
\end{equation}
under the same scaling (\ref{scaling}).

Let
\begin{equation*}
X'=L^2(\mathbb{R}_-,dx)\oplus L^2(\mathbb{R}_+,dx)\oplus L^2(\{0\},\delta_0)
\end{equation*}
and modify the definition of $S$ and $T$ into
\begin{equation}\label{Sxy2}
S(x,y)=S_1(x,y)+1(x\ge 0)S_2(x,y)-S_3(x,y)1(y<0),
\end{equation}
\begin{equation}\label{Txy2}
T(x,y)=-T_1(x,y)-1(x>0)S_2(x,y)+S_3(x,y)1(y\le 0).
\end{equation}
Define the matrix kernel
\begin{equation}\label{Kuv2}
K_{uv}(x,y)=\begin{pmatrix} R_u(x,y) & R_u(x,y) & R_u(x,y) \\
                                              uR_u(x,y) & uR_u(x,y) & uR_u(x,y)\\
                                              vR_u(x,y) & vR_u(x,y) & vR_u(x,y)
                    \end{pmatrix},
 \end{equation}
where $R_u$ is defined as in (\ref{Ru}) but with $S$ and $T$ given by (\ref{Sxy2}) and (\ref{Txy2}) instead. Then, under (\ref{scaling}),
\begin{equation}\label{limit2}
\lim_{T\to\infty} c_3(t_1T)^{1/3}p(a;A)=\frac{1}{(\pii)^2}\int_{\gamma_r}du\int_{\gamma_r}\frac{dv}{v^2}\det(I+K_{uv})_{X'},
\end{equation}
for any $r>0$. From this formula, it is possible to derive the formula for the two-time distribution given in \cite{JoTt}. It should be possible
to get the formula in \cite{JoTt} also by taking the partial derivative with respect to $\xi_1$ in (\ref{ftt}). We have not been able to carry out that computation.

\bigskip
\noindent
{\bf Acknowledgement}: I thank Jinho Baik for an interesting discussion and correspondence. Also, thanks to Mustazee Rahman for helpful comments on the paper.


\begin{thebibliography}{99}

\itemsep=\smallskipamount

\bibitem{BaiLiu} J. Baik, Z. Liu, \emph{Multi-point distribution of periodic TASEP}, arXiv:1710.03284 
 
 
 \bibitem{BorPet} A. Borodin and L. Petrov, \emph{Integrable probability: from representation theory to MacDonald processes}, 
Probab. Surv. {\bf 11} (2014), 1--58

\bibitem{Corw} I. Corwin, \emph{The Kardar-Parisi-Zhang equation and universality class}, Random Matrices Theory Appl. {\bf 1} (2012), no. 1, 1130001

\bibitem{CorFerPec} I. Corwin, P.L. Ferrari, S. P\'ech\'e, \emph{Universality of slow de-correlation in KPZ growth},  
Ann. Inst. Henri Poincaré Probab. Stat. {\bf 48} (2012), 134--150


\bibitem{CorwQua} I. Corwin and J. Quastel, \emph{Renormalization fixed point of the KPZ universality class}, arXiv:1103.3422 

\bibitem{NarDou} J. De Nardis and P. Le Doussal, \emph{Tail of the two-time height distribution for KPZ growth in one dimension}, J. Stat. Mech. Theory Exp., (5):053212, 2017

\bibitem{NaDoTa} J. De Nardis, P. Le Doussal, K. A. Takeuchi, \emph{Memory and universality in interface growth}, Phys. Rev. Lett. {\bf 118} (2017), 125701 

\bibitem{DJL} J. De Nardis, K. Johansson and P. Le Doussal, in preparation.

\bibitem{DiWa}  A.B. Dieker, J. Warren, \emph{Determinantal transition kernels for some interacting particles on the line}, Ann. Inst. Henri Poincaré Probab. Stat. {\bf 44} (2008), no. 6, 1162--1172

\bibitem{Dots1} V. Dotsenko,\emph{ Two-time free energy distribution function in $(1+1)$ directed polymers}, 
J. Stat. Mech. Theory Exp. 2013, no. 6, P06017

\bibitem{Dots2} V. Dotsenko, \emph{Two-point free energy distribution function in (1+1) directed polymers}, J. Phys. A {\bf 46} 
(2013), no. 35, 355001

\bibitem{Dots3} V. Dotsenko., \emph{On two-time distribution functions in (1 + 1) random directed polymers}, J. Phys. A, {\bf 49} (2016), no:27, 27LT01

\bibitem{Dous} P. Le Doussal, \emph{Maximum of an Airy process plus Brownian motion and memory in KPZ growth}, Phys. Rev. E {\bf 96} (2017), 060101 

\bibitem{Fer} P. L. Ferrari, \emph{Slow decorrelations in KPZ growth}, J. Stat. Mech. Theory Exp. 2008, P07022

\bibitem{FerSpo} P.L. Ferrari, H. Spohn, \emph{On time correlations for KPZ growth in one dimension} SIGMA Symmetry Integrability Geom. Methods Appl. {\bf 12} (2016), Paper No. 074, 23 pp

\bibitem{GGK} I. Gohberg, S. Goldberg and N. Krupnik, \emph{Traces and determinants of linear operators}, Opeartor Theory Advances and Applications Vol. 116,
Birkh\"auser Verlag, Basel, 2000

\bibitem{JoSh} K. Johansson, {\em Shape fluctuations and random matrices,} Commun. Math. Phys., {\bf 209}, (2000), 437--476

\bibitem{JoDPG} K. Johansson, \emph{Discrete polynuclear growth and determinantal processes}, Comm. Math. Phys. {\bf 242} (2003), 277--329


\bibitem{JoMar} K. Johansson, {\em A multi-dimensional Markov chain and the Meixner ensemble}, Ark. Mat. {\bf 48} (2010), 437--476

\bibitem{JoTt} K. Johansson, {\em Two time distribution function in Brownian directed percolation}, Comm. Math. Phys. {\bf 351} (2017), 441--492


\bibitem{MaQuRe} K. Matetski, J. Quastel and D. Remenik, \emph{The KPZ fixed point}, arXiv:1701.00018

\bibitem{MaQu} K. Matetski and J. Quastel, \emph{From the totally asymmetric simple exclusion process to the KPZ fixed point}, arXiv:1710.02635

\bibitem{PrSp} M. Pr\"ahofer, H. Spohn, \emph{Scale invariance of the PNG droplet and the Airy process}, J. Statist. Phys. {\bf 108} (2002), 1071--1106

\bibitem{Quas} J. Quastel, \emph{Introduction to KPZ. Current developments in mathematics}, (2011), 125--194, Int. Press, Somerville, MA, 2012

\bibitem{Take} K. A. Takeuchi, \emph{Statistics of circular interface fluctuations in an off-lattice Eden model}, J. Stat. Mech. 2012, P05007

\bibitem{TaSa} K. A. Takeuchi, M. Sano, \emph{Evidence for geometry-dependent universal fluctuations of the Kardar-Parisi-Zhang interfaces in liquid-crystal turbulence},
J. Stat. Phys. {\bf 147} (2012), , 853--890 

\bibitem{Warr} J. Warren, \emph{Dyson's Brownian motions, intertwining and interlacing}, Electron. J. Probab. 12 (2007), no. 19, 573--590

\end{thebibliography}
\end{document}